\newtheorem{definition}{Definition}[section]
\newtheorem{theorem}{Theorem}[section]
\newtheorem{prop}{Proposition}[section]
\newtheorem{corollary}{Corollary}[section]
\newtheorem{lemma}{Lemma}[section]
\newtheorem{remark}{\textbf{Remark}}[section]
\def\rr{\mathbb{R}}
\def\hh{\mathbb{H}}
\def\p{\partial}
\def\th{\theta}
\def\p{\partial}
\def\<{\langle}
\def\>{\rangle}
\def\div{{\rm div}}
\def\n{\nabla}
\def\vp{\varphi}
\def\ep{\epsilon}
\numberwithin{equation} {section}
\begin{document}
	
	\title[Stable $(r+1)$-th capillary hypersurfaces]{Stable $(r+1)$-th capillary hypersurfaces }
	\author{Jinyu Guo}
\address{Department of Mathematical Sciences, Tsinghua University, Beijing, 100084, China}
\email{guojinyu@tsinghua.edu.cn}
	\author{Haizhong Li}
	\address{Department of Mathematical Sciences, Tsinghua University, Beijing, 100084, China}
	\email{lihz@tsinghua.edu.cn}
	\author{Chao Xia}
	\address{School of Mathematical Sciences, Xiamen University, Xiamen, 361005, China}
	\email{chaoxia@xmu.edu.cn}
	\begin{abstract}
In this paper, we propose a new definition of stable $(r+1)$-th capillary hypersurfaces from  variational perspective for any $1\leq r\leq n-1$. More precisely, we define stable $(r+1)$-th capillary hypersurfaces to be smooth local minimizers of a new energy functional under volume-preserving and contact angle-preserving variations. Using the new concept of the stable $(r+1)$-th capillary hypersurfaces, we generalize the stability results of Souam \cite{Souam3} in a Euclidean half-space and Guo-Wang-Xia \cite{GWX3} in a horoball in hyperbolic space for capillary hypersurface to $(r+1)$-th capillary hypersurface case.


	\end{abstract}
	
	\keywords{stability, capillary hypersurfaces,  Minkowski's formula, higher-order mean curvature}
	
	\maketitle
	
	\medskip
	
	\tableofcontents

	\section{Introduction}
A classical result for constant mean curvature (CMC) hypersurfaces proved by Barbosa-do Carmo \cite{BdC} and Barbosa-do Carmo-Eschenburg \cite{BCE} states that:
	{\it any stable immersed closed CMC hypersurfaces in a space form are geodesic spheres.} Here ``stable'' means the second variation of the area functional is nonnegative for any volume-preserving variations. The following analogous result for stable immersed closed hypersurfaces with constant higher-order mean curvature in space forms has been proved by Alencar-do Carmo-Colares \cite{AdC}, Alencar-do Carmo-Rosenberg \cite{ACR} and  Barbosa-Colares \cite{BCo}.
\begin{theorem}[\cite{AdC, ACR, BCo}]\label{thm0.0}
Let $0\leq r\leq n-1$. An immersed n-dimensional closed constant $(r+1)$-th mean curvature hypersurface in space forms\footnote{We regard an open hemi-sphere as a spherical space form in this paper.} is stable if and only if it is a geodesic sphere.
\end{theorem}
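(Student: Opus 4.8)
The plan is to follow the classical variational two-step scheme. First I would fix the energy functional whose volume-constrained critical points are precisely the constant $(r+1)$-th mean curvature hypersurfaces. A natural candidate is the (curvature-corrected) $r$-th area functional $\mathcal{A}_r=\int_M H_r\,dA$: along a normal variation with speed $f$ its first variation equals $-\int_M\big[(n-r)H_{r+1}+c\,r\,H_{r-1}\big]f\,dA$. Imposing the volume constraint $\int_M f\,dA=0$ and absorbing the lower-order curvature term $c\,r\,H_{r-1}$ by the standard recursion $\mathcal{A}_r\rightsquigarrow\mathcal{A}_r+c\,a_1\mathcal{A}_{r-2}+c^2 a_2\mathcal{A}_{r-4}+\cdots$ shows that the Euler-Lagrange equation is exactly $H_{r+1}\equiv\mathrm{const}$. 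I would then record the second variation at such a critical point as
\[
I_r(f)=-\int_M f\Big(L_r f+\big[\operatorname{tr}(A^2T_r)+c\,\operatorname{tr}(T_r)\big]f\Big)\,dA,
\]
where $A$ is the shape operator, $T_r$ its $r$-th Newton transformation, and $L_r u=\operatorname{div}(T_r\nabla u)$; stability then means $I_r(f)\ge0$ for every admissible $f$. For $r=0$ this is the Barbosa-do Carmo form $\int_M(|\nabla f|^2-(|A|^2+cn)f^2)\,dA$.

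For the easy implication (a geodesic sphere is stable) I would use that on a totally umbilic sphere all principal curvatures equal a constant $\lambda$, so $T_r=\binom{n-1}{r}\lambda^r\,\mathrm{Id}$ is a constant multiple of the identity and $L_r$ is a positive constant multiple of the Laplacian $\Delta$. The index form collapses to $\mu_r\int_M(|\nabla f|^2-\nu_r f^2)\,dA$ with explicit constants $\mu_r,\nu_r$, and since admissible $f$ are $L^2$-orthogonal to the constants, the Rayleigh quotient is bounded below by the first nonzero eigenvalue of $\Delta$ on the round sphere; a direct eigenvalue comparison then yields $I_r(f)\ge0$.

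The substance is the converse: stable $\Rightarrow$ geodesic sphere. Here I would bring in the Hsiung-Minkowski formulas of the space form, built from a conformal field $X$ of $\bar M(c)$ (the position vector when $c=0$) with potential $\phi$ defined by $\bar\nabla X=\phi\,\mathrm{Id}$ and support function $u=\langle X,N\rangle$, namely $\int_M\big(H_r\,\phi-H_{r+1}\,u\big)\,dA=0$ and its neighbors. Using these I would construct an admissible test function as a suitable linear combination of $\phi$, $u$ and the curvature functions, normalized via the Minkowski relations so that $\int_M f\,dA=0$, and compute $I_r(f)$ with the operator identities for $L_r\phi$ and $L_r u$. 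The Minkowski relations make the gradient and curvature terms telescope, and the upshot should be $I_r(f)\le0$, with the remaining sign-definite term governed by the Newton-Maclaurin inequality $H_r^2\ge H_{r-1}H_{r+1}$. Combining with stability forces $I_r(f)=0$, hence pointwise equality in Newton's inequality, which holds if and only if the hypersurface is totally umbilic; a connected closed umbilic hypersurface in a space form is a geodesic sphere.

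I expect two points to carry the real difficulty. The first is ellipticity: the whole scheme needs $T_r$ to be positive definite so that $L_r$ is elliptic and the integrations by parts and sign manipulations are legitimate. This is not automatic and must be secured separately, since a closed constant-$H_{r+1}$ hypersurface (with $H_{r+1}>0$, which one first establishes via an elliptic point) has $H_1,\dots,H_{r+1}>0$ everywhere by the G\r{a}rding cone argument, so that $T_1,\dots,T_r$ are positive definite and $L_r$ is elliptic. The second obstacle is the precise bookkeeping in the test-function computation: choosing the coefficients in $f$ and matching every curvature term through the Minkowski formulas so that $I_r(f)$ collapses to a single Newton-inequality term of definite sign is where all the constants must line up exactly.
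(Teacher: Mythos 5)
Your proposal follows essentially the same route as the proofs this paper cites for Theorem \ref{thm0.0} (and as the paper's own capillary adaptation in Sections \ref{sec3}--\ref{sec4}): the recursively corrected energy whose volume-constrained critical points have constant $H_{r+1}$, the index form $-\int_M f\left(L_r f+\mathrm{tr}(P_r h^2)f+K\,\mathrm{tr}(P_r)f\right)dA$, ellipticity of $L_r$ secured through an elliptic point and the G{\aa}rding-cone argument, and a Minkowski-normalized test function built from the conformal potential and support function, closed out by the equality case of Newton--MacLaurin forcing umbilicity. The only blemishes are cosmetic bookkeeping: the lower-order term in your first variation should carry the opposite sign, $(n-r)H_{r+1}-c\,rH_{r-1}$ (cf.\ \eqref{Ar-1}), and the pinching actually used in the final step consists of the neighboring Newton--MacLaurin inequalities $H_{r+2}\le H_1H_{r+1}$ and $H_{r+1}\le H_1H_r$ combined with a H\"older step (cf.\ \eqref{xeqqqq1qqa}), rather than $H_r^2\ge H_{r-1}H_{r+1}$ itself.
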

We also mention that B. Palmer \cite{Pa} and the second author with Y. He \cite{HL} proved analogous result for hypersurfaces with constant $(r+1)$-th anisotropic  mean curvature.

The study of capillary hypersurfaces has attracted a lot of attentions in the last decades. In fluid mechanics a capillary surface models an interface between two fluids in the absence of gravity. In fact the free surface of the fluids locally minimizes free energy functional under a volume constraint. We refer to the book of Finn \cite{Finn} for more physical problems about capillary surfaces.
From the geometric variational point of view, a capillary hypersurface in a domain $B$ is a stationary point of free energy functional for volume-preserving variations whose boundary freely moves on $\p B$. By the first variational formula, it is a CMC hypersurface with boundary which intersects $\p B$ at a constant angle.
There are plenty of important works on the existence, regularity and their min-max theory for free boundary or capillary minimal hypersurfaces, see for example \cite{Struwe, GJ, Jost, DM, GJ2, Gruter, LZZ, LZ1, GLWZ} and the references therein.

The study on the classification for stable capillary hypersufaces has been initiated by Ros-Vergasta \cite{RV} for free boundary case and by Ros-Souam \cite{RS} for general capillary case. When $B$ is a Euclidean unit ball, the classification has been recently completed by Nunes \cite{Nu} for free boundary case in two dimensions and eventually by the third author with Wang \cite{WX} for general capillary case in all dimensions, by using a new Minkowski formula involving no boundary term. When $B$ is a Euclidean half-space, the classification has been recently settled by Souam \cite{Souam3}.
\begin{theorem}[\cite{Souam3}]\label{souam-thm}
A compact immersed capillary hypersurface in a Euclidean half-space is stable if and only if it is a spherical cap.
\end{theorem}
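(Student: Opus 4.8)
The plan is to adapt the classical Barbosa--do Carmo argument to the capillary boundary condition, in the spirit of Ros--Souam. Set $\mathbb{R}^{n+1}_+=\{x_{n+1}\ge 0\}$ with bounding hyperplane $P=\{x_{n+1}=0\}$, and let $\Sigma$ be a compact capillary hypersurface whose boundary lies on $P$ and which meets $P$ at the constant angle $\theta$. The relevant energy is $\mathcal{E}(\Sigma)=|\Sigma|-\cos\theta\,|\Omega|$, $\Omega\subset P$ being the wetted domain, and a stationary point has constant mean curvature $H$ and constant contact angle. I would first record the second variation quadratic form
\begin{equation*}
Q(f,f)=\int_\Sigma\bigl(|\nabla f|^2-|A|^2f^2\bigr)\,dA-\int_{\partial\Sigma}q\,f^2\,ds,
\end{equation*}
defined for all admissible $f$ (those with $\int_\Sigma f\,dA=0$); the bulk Ricci term vanishes because $\mathbb{R}^{n+1}$ is flat, and the boundary coefficient $q$ depends only on $\theta$ and the second fundamental form of $\Sigma$ along $\partial\Sigma$ because $P$ is totally geodesic. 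Stability means $Q(f,f)\ge 0$ for every such $f$.

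The ``if'' direction amounts to verifying that a spherical cap is stable. I would produce explicit Jacobi functions on the cap from the conformal Killing fields of $\mathbb{R}^{n+1}$ that preserve $P$ --- the translations along $P$, the dilation centred at a point of $P$, and the tangential special conformal fields --- whose normal components $\langle Y,\nu\rangle$ generate the low part of the spectrum of the Jacobi operator $L=\Delta_\Sigma+|A|^2$ under the natural Robin condition $\partial_\mu f=qf$. The point is then to check that, once the single bad direction is excluded by the constraint $\int_\Sigma f\,dA=0$, the constrained form is nonnegative; this is a direct, if delicate, computation on the explicit cap.

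The heart of the matter is the ``only if'' direction. Choosing the origin at a point of $P$, I would work with the support function $u=\langle x,\nu\rangle$, which satisfies $Lu=-nH$, together with the constant function, for which $L1=|A|^2$. The essential tool is a capillary Minkowski formula, adapted to the contact angle $\theta$ and the flat boundary $P$, arranging the boundary contributions so that $\int_\Sigma(1+Hu)\,dA=0$; this makes $f=1+Hu$ an admissible test function. Integrating by parts and using $Lf=|A|^2-nH^2$, the bulk part of $Q(f,f)$ becomes $-\int_\Sigma(1+Hu)\,(|A|^2-nH^2)\,dA$ up to boundary terms that the contact-angle-adapted construction is designed to control. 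Stability then yields $\int_\Sigma(1+Hu)(|A|^2-nH^2)\,dA\le 0$; since the half-space geometry forces $1+Hu\ge 0$ while $|A|^2-nH^2\ge 0$ pointwise with equality exactly at umbilic points, the integrand must vanish identically, so $\Sigma$ is totally umbilic. A compact totally umbilic hypersurface in $\mathbb{R}^{n+1}$ is a piece of a round sphere or a hyperplane, and the capillary condition pins it down to a spherical cap, completing the argument.

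The main obstacle I expect is the boundary bookkeeping in the ``only if'' step: producing a Minkowski formula whose boundary term cancels so that $1+Hu$ is genuinely mean-zero, and simultaneously showing that the boundary contribution to $Q(1+Hu,1+Hu)$ has the favourable sign. Both hinge delicately on the fixed contact angle and the total geodesy of $P$, and a poor choice of origin or auxiliary vector field would leave uncontrolled boundary integrals; this is where the specific geometry of the Euclidean half-space must be used in full. A secondary difficulty is the explicit constrained eigenvalue computation underlying the ``if'' direction.
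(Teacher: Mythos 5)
Your ``only if'' step has a genuine gap at its foundation: the test function $f=1+Hu$ is not admissible. In the half-space with contact angle $\theta\neq\pi/2$ there is no Minkowski formula making $1+Hu$ mean-zero. The correct capillary Minkowski identity (the $r=0$ case of \eqref{Mink1-half}) reads
\begin{equation*}
\int_M\bigl[(1-\cos\theta\langle E_{n+1},\nu\rangle)-H\langle x,\nu\rangle\bigr]\,dA=0,
\end{equation*}
so the constant $1$ must be replaced by the weight $\omega:=1-\cos\theta\langle E_{n+1},\nu\rangle$, and the admissible test function is $\varphi=\omega-H\langle x,\nu\rangle$. The weight is forced: integrating $\div_M(x^T)$ leaves the boundary term $\cos\theta\int_{\partial M}\langle x,\bar\nu\rangle\,ds$, which no choice of origin kills; it is cancelled only by adding the contribution of the translation field $E_{n+1}$, which is what produces $\cos\theta\langle E_{n+1},\nu\rangle$. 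Relatedly, the boundary term in $Q(f,f)$ is not merely ``controlled'' in the correct proof --- it cancels exactly, because both $\langle x,\nu\rangle$ and $\omega$ satisfy the Robin identity $\nabla_\mu(\cdot)=q(\cdot)$ along $\partial M$ with $q=\cot\theta\,h(\mu,\mu)$ (Proposition \ref{prop-boundary-half}), whence $Q(\varphi,\varphi)=-\int_M\varphi(\Delta+|A|^2)\varphi\,dA$ with no leftover boundary integral. Your $f=1+Hu$ fails this Robin condition already at the constant term ($\nabla_\mu 1=0\neq q$ in general), so its boundary contribution is genuinely uncontrolled.

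Your endgame is also internally inconsistent: if $f$ were both admissible ($\int_\Sigma f\,dA=0$) and pointwise nonnegative, then $f\equiv 0$ and stability would give nothing; and in any case the claimed bound $1+Hu\geq 0$ has no justification for a general immersed capillary hypersurface. The correct conclusion needs no pointwise sign on the test function. From $J_0\varphi=|A|^2-nH^2=n(n-1)(H_1^2-H_2)$ and stability one gets $\int_M\varphi\,(H_1^2-H_2)\,dA\leq 0$; applying the Minkowski formula \emph{twice} (for $r=0$ and $r=1$, with $H$ constant) gives $\int_M\langle x,\nu\rangle H_1^2\,dA=H\int_M\omega\,dA=\int_M\langle x,\nu\rangle H_2\,dA$, so the $\langle x,\nu\rangle$-part drops out, leaving $\int_M\omega\,(H_1^2-H_2)\,dA\leq 0$, where now $\omega>0$ pointwise (since $\theta\in(0,\pi)$) and $H_1^2-H_2\geq 0$; hence umbilicity. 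This is precisely the $r=0$ specialization of the proof of Theorem \ref{thmm4.1} (the paper itself cites \cite{Souam3} for the statement, and its machinery in Section \ref{sec4} is built to reproduce and generalize that argument). Finally, your ``if'' direction via a constrained spectral computation on the cap could be made to work but is far heavier than needed: spherical caps minimize the capillary energy (\cite{GMT}, as invoked in Proposition \ref{stable-necess-half}), which yields stability immediately.
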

The anisotropic version in a half-space has been proved by the first and the third author \cite{GX4}.

Motivated by the concept of higher-order mean curvatures and also the capillary theory, it is natural to ask for a higher-order capillary theory.
In \cite{DE1, DE2}, Damasceno-Elbert introduced a notion of stable capillary hypersurfaces with constant higher-order mean curvature in terms of the associated stability operator, instead of that given by means of a variational problem.


In this paper, we propose a new notion of stability for higher-order capillary hypersurfaces from the variational perspective.
For any $1\leq r\leq n-1$, a $n$-dimensional $(r+1)$-th capillary hypersurface in $B$ is a hypersurface with constant $(r+1)$-th mean curvature $H_{r+1}$ and with boundary intersecting $\p B$ at a constant angle. It is known that the first variation of a higher-order mean curvature integral involves curvature terms in the boundary integral, which violates the capillary boundary condition. To overcome this difficulty,  we make a restriction on the variation class. Precisely, we define a new $(r+1)$-th energy functional $\mathcal{E}_{r+1}$ and show that a $(r+1)$-th capillary hypersurface is a stationary point of $\mathcal{E}_{r+1}$ for any volume-preserving and angle-preserving variations.
We say a $(r+1)$-th capillary hypersurface is stable if the second variation of $\mathcal{E}_{r+1}$ is nonnegative for any volume-preserving and angle-preserving variations.
We emphasize that comparing with the classical capillary theory ($r=0$), we only allow angle-preserving variations in higher-order case. Equivalently speaking, we require that the normal components of variational fields stay in
\begin{eqnarray}\label{stable1-half}
\mathcal{F}=\left\{\varphi\in C^{\infty}(M) \Big| \int_M \varphi\, dA=0\,\,\, \text{and}\,\,\, \nabla_{\mu} \varphi=q\varphi\,\,\, \text{on}\,\,\, \partial M\right\}.
\end{eqnarray}
This is the key point for this new notion of stability (see Proposition \ref{volume-preserving-1} below in detail).


Our first main result in this paper is the following classification for stable $(r+1)$-th capillary hypersurfaces in a $(n+1)$-dimensional Euclidean half-space $\overline{\mathbb{R}^{n+1}_{+}}$.
\begin{theorem}\label{thm0.1-half}
Let $1\leq r\leq n-1$. A compact immersed $(r+1)$-th capillary hypersurface in $\overline{\mathbb{R}^{n+1}_{+}}$ is stable if and only if it is a spherical cap.
\end{theorem}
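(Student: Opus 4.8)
The plan is to prove both implications, with the substantial content lying in the ``only if'' direction, while the ``if'' direction is a direct spectral computation. Throughout I use that a totally umbilical hypersurface meeting $\p\overline{\rr^{n+1}_+}$ at a constant angle is exactly a spherical cap, so that the ``only if'' direction reduces to showing that stability forces total umbilicity. The starting point is the second variation of $\mathcal{E}_{r+1}$ on the admissible class $\mathcal{F}$ of \eqref{stable1-half}, as furnished by Proposition \ref{volume-preserving-1} and its attached computation: schematically it yields a quadratic form
\[
Q(\varphi,\varphi)=-\int_M \varphi\,\big(L_r\varphi+V_r\,\varphi\big)\,dA+\int_{\p M}\mathfrak{q}_r\,\varphi\,ds,
\]
where $L_r=\mathrm{div}(P_r\n\,\cdot\,)$ is the linearized $(r+1)$-th mean curvature operator, $P_r$ the $r$-th Newton transformation, $V_r$ a curvature potential, and $\mathfrak{q}_r$ a boundary integrand encoding the contact angle. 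Stability means $Q(\varphi,\varphi)\ge 0$ for all $\varphi\in\mathcal{F}$.

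For the ``only if'' direction I would follow the Barbosa--do Carmo and Souam test-function philosophy, adapted to higher order. The half-space $\overline{\rr^{n+1}_+}$ carries the dilation (conformal) field $X$ based at a point of $\p\overline{\rr^{n+1}_+}$, together with the translations and rotations tangent to the bounding hyperplane. The key device is a higher-order Minkowski formula for capillary hypersurfaces carrying \emph{no boundary term}, of the schematic shape $\int_M\big(H_r\,u+H_{r+1}\langle X,N\rangle\big)\,dA=0$, where $u$ is the support-type function adapted to the half-space and the contact angle; this is where the capillary condition enters decisively. Using this relation I would build a test function $\varphi$ --- essentially $\langle X,N\rangle$ corrected by a multiple of $H_r$, i.e. the combination the Minkowski identity renders mean-zero --- and check that it lies in $\mathcal{F}$: the integral constraint is exactly the Minkowski relation, while the Robin condition $\n_\mu\varphi=q\varphi$ on $\p M$ follows from differentiating the contact-angle condition and using that $X$ and the tangent Killing fields preserve the half-space structure. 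Substituting $\varphi$ into $Q(\varphi,\varphi)\ge 0$ and integrating by parts (using $\mathrm{div}\,P_r=0$ in the flat ambient space and the boundary cancellation furnished by the Minkowski identity), the inequality should collapse to a pointwise Newton--MacLaurin inequality of the type $H_r^2\ge H_{r-1}H_{r+1}$ integrated against a nonnegative weight. Equality is then forced everywhere, and equality in Newton--MacLaurin holds precisely at umbilical points, so $M$ is totally umbilical and hence a spherical cap.

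For the ``if'' direction I would verify $Q(\varphi,\varphi)\ge0$ on $\mathcal{F}$ for spherical caps by an explicit eigenvalue analysis. On a cap every principal curvature equals a constant $\kappa$, so $P_r$ is a constant multiple of the identity, $L_r$ reduces to a constant multiple of the Laplacian, and the potential $V_r$ is constant; thus $Q$ becomes a Robin-type eigenvalue form on a geodesic ball in the round sphere. The constraint $\int_M\varphi\,dA=0$ eliminates the lowest mode, while the first nontrivial eigenspace is realized by restrictions of ambient linear functions (Jacobi fields coming from the tangent Killing fields), and these modes give $Q\ge0$. This is the higher-order counterpart of the computation behind Theorem \ref{souam-thm}.

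The main obstacle I anticipate is the construction and admissibility check of the test function in the higher-order setting. Unlike the classical case $r=0$, the first variation of $\int_M H_r$ carries curvature boundary terms, and it is precisely the restriction to angle-preserving variations --- the condition $\n_\mu\varphi=q\varphi$ defining $\mathcal{F}$ --- that must be reconciled with the chosen $\varphi$; verifying $\n_\mu\varphi=q\varphi$ for the support-type test function, and securing the exact boundary cancellation in the integration by parts, is the delicate point. The enabling tool is the no-boundary-term higher-order Minkowski formula, whose derivation for the half-space capillary geometry I would treat as the technical heart of the argument. A secondary point is to ensure throughout that $M$ lies in the appropriate G\aa rding cone, so that $L_r$ is elliptic and the equality case of Newton--MacLaurin genuinely characterizes umbilicity; this positivity must be extracted from the hypotheses rather than assumed.
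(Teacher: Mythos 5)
Your plan for the hard (``only if'') direction is essentially the paper's own proof (Theorem \ref{thmm4.1}): the paper uses exactly the no-boundary-term higher-order Minkowski formula \eqref{Mink1-half} in $\overline{\mathbb{R}^{n+1}_{+}}$ (imported from \cite{WWX1} rather than re-derived), builds a test function from the support function $\langle x,\nu\rangle$ and the angle-adapted weight $\omega=1-\cos\theta\langle E_{n+1},\nu\rangle$, verifies membership in $\mathcal F$ via Proposition \ref{prop-boundary-half}, and closes with the equality case of Newton--MacLaurin. One structural point you have slightly wrong: in this framework the second variation \eqref{stab-ineq} carries \emph{no} boundary integrand $\mathfrak q_r$ at all --- the restriction to angle-preserving variations eliminates it --- so no ``boundary cancellation'' is needed in the integration by parts beyond what the Robin condition already provides. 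For the ``if'' direction you genuinely diverge: instead of a Robin eigenvalue analysis on the cap, the paper exploits that on a totally umbilical hypersurface $P_r$ is a constant multiple of the identity, so that $\mathcal{E}_{r+1}''(0)=\frac{(r+1)(n-r)}{n}\Lambda^{r}\,\mathcal{E}_{1}''(0)$ (Proposition \ref{stable-necess-half}), and then quotes the known stability/minimizing property of spherical caps for the classical functional $\mathcal E_1$ \cite{GMT, Souam3}. Your spectral route would in effect re-prove that $r=0$ base case; it should work, but the paper's reduction is cheaper and makes the higher-order statement a corollary of the classical one.

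Two concrete repairs are needed before your sketch becomes a proof. First, the mean-zero combination cannot be the literal Minkowski integrand $H_r\omega-H_{r+1}\langle x,\nu\rangle$: since $H_r$ is not constant, $\nabla_\mu(H_r\omega)=qH_r\omega$ fails in general and this function leaves $\mathcal F$ --- precisely the delicate point you flagged. The paper's fix is to take $\varphi=\alpha\omega-H_{r+1}\langle x,\nu\rangle$ with the \emph{constant} $\alpha=\bigl(\int_M\omega\,dA\bigr)^{-1}\int_M\omega H_r\,dA$, so the Robin condition is inherited from \eqref{boundary-222}--\eqref{boundary-111} (both $\omega$ and $\langle x,\nu\rangle$ satisfy it with the same $q=\cot\theta\,h(\mu,\mu)$), while mean-zero still follows from \eqref{Mink1-half}. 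Second, because $\alpha$ is an average rather than $H_r$ itself, the stability inequality does \emph{not} collapse to a single pointwise Newton--MacLaurin inequality against a nonnegative weight: after using $H_{r+2}\le H_1H_{r+1}$ pointwise and the Minkowski formula twice more, the paper still needs the H\"older step \eqref{xeqqqq1qqa} together with $H_{r+1}\le H_1H_r$ to show $\frac{1}{H_{r+1}}\int_M\omega H_1\,dA\ge\frac1\alpha\int_M\omega\,dA$, and only then does equality in \eqref{Newtonaa} force umbilicity. Finally, the ellipticity you rightly insisted on is extracted exactly as you hoped: compactness in the half-space yields an elliptic point, $H_{r+1}$ is then a positive constant, and Proposition \ref{prop-elliptic} gives ellipticity of the $L_j$ and positivity of the $H_j$, so $\alpha>0$ and the equality-case analysis is legitimate.
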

The proof of Theorem \ref{thm0.1-half} is based on the following higher-order Minkowski-type formula in $\overline{\mathbb{R}^{n+1}_{+}}$:
\begin{equation}\label{aniso-Mink}
\int_{M}[H_{r}(1-\cos\theta\langle E_{n+1},\nu\rangle)-H_{r+1}\langle x,\nu\rangle]\, dA=0, \quad\,\,\,\,\text{for any}\,\,\, 0\leq r\leq n-1.
\end{equation}
Formula \eqref{aniso-Mink} has been proved by Wang, Weng and the third author \cite{WWX1}, which was used to prove Alexandrov-Fenchel inequalities for embedded hypersurfaces with capillary boundary in $\overline{\mathbb{R}^{n+1}_{+}}$. Note that \eqref{aniso-Mink} offers an admissible test function which also satisfies $\nabla_{\mu} \varphi=q\varphi$ on $\p M$.

When $B$ is a $(n+1)$-dimensional horoball in hyperbolic space $\mathbb{H}^{n+1}$, we see that its boundary $\partial B$ is a horosphere, that is, a non-compact totally umbilical hypersurface with all principal curvatures equal to $1$. In the next part, we study a stability problem for $(r+1)$-th capillary hypersurfaces supported on a horosphere. For $r=0$, the classification of stable capillary hypersurfaces supported on a horosphere has been proved by the first and the third authors with Wang in \cite{GWX3}.
\begin{theorem}[\cite{GWX3}]\label{GWXthm}
A compact immersed  capillary hypersurface supported on a horosphere in $\hh^{n+1}$ is stable if and only if it is totally umbilical.
\end{theorem}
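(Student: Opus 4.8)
\emph{Proof proposal.} The plan is to follow the Barbosa--do Carmo--Eschenburg scheme in the capillary form developed by Ros--Souam, reducing the statement to the construction of a single admissible test function in the second variation. Writing $h$ for the second fundamental form of $M$ and recalling that $\hh^{n+1}$ has constant curvature $-1$ so that $\ov{\mathrm{Ric}}(\nu,\nu)=-n$, the stability quadratic form for volume-preserving capillary variations reads
\[
Q(\vp,\vp)=\int_M\big(|\n\vp|^2-(|h|^2-n)\vp^2\big)\,dA-\int_{\p M} q\,\vp^2\,ds,
\]
where $q$ encodes the contact angle $\th$ together with the second fundamental form of the horosphere (whose principal curvatures are all $1$). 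Stability means $Q(\vp,\vp)\ge 0$ for every $\vp$ with $\int_M\vp\,dA=0$. The harder ``stable $\Rightarrow$ totally umbilical'' direction will follow from exhibiting one $\vp$ for which $Q(\vp,\vp)\le 0$ with equality forcing umbilicity.

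The geometric object that produces this test function is the concircular potential of the horosphere foliation. If $b$ is the Busemann function whose level sets are the horospheres, then $V=e^{b}$ satisfies $\on^2 V=V\,\ov g$ on $\hh^{n+1}$, so that $\on V=VN$ with $N$ the unit normal of the foliation, and $\on V$ is a conformal Killing field adapted to the boundary $\p B$. On $M$ I would introduce the restriction $V|_M$ and the support function $u=\langle\on V,\nu\rangle$. Tracing $\on^2 V=V\ov g$ over $M$ gives $\De_M V=nV-(\mathrm{tr}\,h)\,u$, and integrating yields a Minkowski-type identity. Its boundary term does not vanish for $V$ alone, so---exactly as in the half-space formula \eqref{aniso-Mink}---one must add a correction built from the horosphere normal, and the constant-angle condition is then used to cancel the boundary contribution, producing a clean identity of the shape $\int_M(nV-(\mathrm{tr}\,h)\,u)\,dA=0$. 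This is the horosphere analogue of the Wang--Weng--Xia formula.

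With this identity in hand I would take as test function a suitable normalization $\vp$ of the combination of $V|_M$ and $u$, arranged so that $\int_M\vp\,dA=0$. The key point is that, because $\on V$ meets the horosphere at the capillary angle, the restriction $\vp$ satisfies the Robin condition $\n_\mu\vp=q\vp$ on $\p M$; integrating $|\n\vp|^2$ by parts then makes the boundary integral in $Q$ cancel the Robin term exactly, leaving $Q(\vp,\vp)=-\int_M\vp\,L\vp\,dA$ with $L=\De+|h|^2-n$ the Jacobi operator. Using $\on^2 V=V\ov g$ and the Codazzi equation one computes $L$ on $u$ and on $V|_M$ explicitly, and the Minkowski identity then collapses $Q(\vp,\vp)$ to a positive multiple of $\int_M\big(\tfrac1n(\mathrm{tr}\,h)^2-|h|^2\big)\,dA\le 0$, where the sign comes from the pointwise inequality $|h|^2\ge\tfrac1n(\mathrm{tr}\,h)^2$. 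Stability forces this nonpositive quantity to vanish, and equality in $|h|^2\ge\tfrac1n(\mathrm{tr}\,h)^2$ holds pointwise if and only if $M$ is totally umbilical.

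The step I expect to be the main obstacle is precisely the boundary analysis: verifying that the concircular field $\on V$ meets the horosphere at the constant capillary angle in the manner required, so that (i) the correction term cancels the boundary integral in the Minkowski formula, and (ii) the test function $\vp$ obeys the Robin condition $\n_\mu\vp=q\vp$ and is therefore admissible with no leftover boundary term in $Q$. Everything else---the trace identity, the Jacobi computation, and the algebraic inequality---is routine once these boundary matchings are in place. For the converse direction, that the totally umbilical caps are stable, I would compute the spectrum of $L$ under the associated Robin boundary condition and check that its first eigenvalue on the mean-zero subspace is nonnegative, with the support functions accounting for the neutral modes; this is the standard eigenvalue estimate in the theory.
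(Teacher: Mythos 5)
Your overall frame is the right family of ideas, and it is indeed the route of \cite{GWX3} (reproduced here at $r=0$ in Section \ref{sec5}): the concircular function $V_{n+1}$ with $\bar\nabla^2V_{n+1}=V_{n+1}\bar g$, an angle-corrected Minkowski formula, a mean-zero test function obeying the Robin condition $\nabla_\mu\varphi=q\varphi$, and reduction of the second variation to $-\int_M\varphi J_0\varphi\,dA$ with $J_0=\Delta+|h|^2-n$. But there are two genuine gaps. First, the identity you announce, $\int_M\bigl(nV-(\mathrm{tr}\,h)\,u\bigr)\,dA=0$ with $u=\langle\bar\nabla V,\nu\rangle$, is false: integrating your own trace identity $\Delta_M V=nV-(\mathrm{tr}\,h)u$ gives $\int_M\bigl(nV-(\mathrm{tr}\,h)u\bigr)dA=\int_{\partial M}\nabla_\mu V\,ds=\sin\theta\,|\partial M|>0$, and no correction ``built from the horosphere normal'' alone repairs this, because the correct formula \eqref{Mink1} needs the position vector field $x$ (a Killing field of $\hh^{n+1}$) in \emph{two} places: the weight becomes $u=V_{n+1}-\cos\theta\,\bar g(x,\nu)$, and the support function is that of the conformal Killing field $X_{n+1}=x-E_{n+1}$, not of $\bar\nabla V_{n+1}=-E_{n+1}$. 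Its proof goes through the divergence of the tangential part of $X_{n+1}$ together with the separate balancing identity $n\int_M\bar g(x,\nu)\,dA=\int_{\partial M}\bar g(x,\bar\nu)\,ds$ (Proposition \ref{Proposition-x}), and the resulting test function necessarily contains $\bar g(x,\nu)$, whose crucial property $J_0\,\bar g(x,\nu)=0$ (the Killing property) is absent from your sketch.

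Second, and more seriously, the final ``collapse'' you assert does not happen. With the correct $\varphi=u-H_1\bar g(X_{n+1},\nu)$ one computes from Proposition \ref{prop4.66} that $J_0\varphi=\bigl(|h|^2-nH_1^2\bigr)V_{n+1}$, so stability yields $0\le -\int_M\varphi\bigl(|h|^2-nH_1^2\bigr)V_{n+1}\,dA$. Since $\varphi$ has mean zero it changes sign, so this is \emph{not} a positive multiple of $\int_M\bigl(\tfrac1n(\mathrm{tr}\,h)^2-|h|^2\bigr)dA$, and the Minkowski identity cannot turn it into a signed integrand; this is precisely where the horosphere case is harder than the half-space case. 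The paper's proof (Theorem \ref{thmm4.1phi}, specialized to $r=0$) closes the gap with three further ingredients your plan lacks: Green's formula against $\bar g(x,\nu)$ using $J_0\bar g(x,\nu)=0$, the auxiliary function $\Psi=-\bigl(H_1V_{n+1}+\bar g(E_{n+1},\nu)\bigr)$ with $L_0\Psi=\bigl(|h|^2-nH_1^2\bigr)\bar g(E_{n+1},\nu)$ integrated via $\tfrac12L_0\Psi^2$, and the boundary identity \eqref{integral-b} to cancel all boundary terms, leading to $0\ge\int_M\bigl[\bar g(E_{n+1}^T,E_{n+1}^T)\bigl(|h|^2-nH_1^2\bigr)+|\nabla\Psi|^2\bigr]dA$. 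Even then one only gets umbilicity off the set $\{E_{n+1}^T=0\}$, and a final continuation-type argument (the set $U$ in the proof of Theorem \ref{thmm4.1phi}) is needed to conclude globally. So the part you label ``routine once the boundary matchings are in place'' is in fact the core of the proof. For the converse direction your Robin-eigenvalue plan is plausible, but the paper simply verifies the quadratic form directly on totally umbilical caps (Proposition \ref{stable-necess}, resting on \cite[Proposition 2.5]{GWX3}), which is shorter.
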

 We now establish the following result for $(r+1)$-th capillary hypersurfaces.
 \begin{theorem}\label{thm0.2-horo}
	Let $1\leq r\leq n-1$. A compact immersed $(r+1)$-th capillary hypersurface  supported on a horosphere in $\mathbb{H}^{n+1}$ with at least one elliptic point is stable  if and only if it is totally umbilical and not totally geodesic.
	\end{theorem}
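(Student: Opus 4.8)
The plan is to analyze the second variation quadratic form $Q$ of $\mathcal{E}_{r+1}$ on the admissible class $\mathcal{F}$ from \eqref{stable1-half}, mirroring the scheme behind Theorem \ref{thm0.1-half} but replacing the Euclidean position field by a conformal field adapted to the horospherical geometry of $\hh^{n+1}$. The recurring role of the hypothesis ``at least one elliptic point'' is to keep the hypersurface inside the positive G\aa rding cone: combined with $H_{r+1}=\mathrm{const}$ it forces $H_j>0$ for every $1\le j\le r+1$ and makes the $r$-th Newton transformation $P_r$ positive definite. This is what guarantees that the Jacobi operator $L_r\varphi=\mathrm{div}(P_r\nabla\varphi)+\big(\mathrm{tr}(P_rA^2)+(\text{ambient curvature terms})\big)\varphi$ is elliptic and that Newton's inequality $H_{r+1}^2\ge H_rH_{r+2}$ holds, with equality exactly at umbilic configurations.

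For the direction ``stable $\Rightarrow$ totally umbilical'', I would first establish a boundary-term-free Minkowski identity analogous to \eqref{aniso-Mink}, in which the constant $1$ is replaced by the (non-constant) conformal factor $\phi$ of the horosphere-adapted field $X$ and the Euclidean position field is replaced by $X$. This identity singles out a test function $\varphi_0$, built from the support function $\langle X,\nu\rangle$ and normalized so that $\int_M\varphi_0\,dA=0$, which also satisfies the Robin condition $\nabla_\mu\varphi_0=q\varphi_0$ on $\partial M$ and hence lies in $\mathcal{F}$. Substituting $\varphi_0$ into $Q$, integrating by parts, and using the Codazzi equations together with the divergence-freeness of $P_r$ in a space form, the integrand collapses to a pointwise expression controlled by $H_{r+1}^2-H_rH_{r+2}$. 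Newton's inequality then yields $Q(\varphi_0,\varphi_0)\le 0$, with equality if and only if $M$ is totally umbilical; since stability forces $Q(\varphi_0,\varphi_0)\ge 0$, equality must hold and $M$ is umbilical. The elliptic-point hypothesis then excludes the totally geodesic case, giving ``totally umbilical and not totally geodesic''.

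For the converse, on a totally umbilical non-geodesic cap one has $A=\kappa\,g$ with $\kappa\ne 0$ constant, so $P_r$ is a positive constant multiple of the identity and $L_r$ reduces to $c\Delta+(\mathrm{const})$ for some $c>0$. The quadratic form $Q$ then becomes a positive scalar multiple of the $r=0$ capillary quadratic form on the same cap, and one checks that the Robin coefficient $q$ coincides with the one in Theorem \ref{GWXthm}; stability thus reduces to the first-eigenvalue estimate underlying that theorem, which guarantees $Q\ge0$ on the mean-zero Robin class $\mathcal{F}$. This step is where ``not totally geodesic'' is essential, since $\kappa=0$ would make $P_r=0$ for $r\ge1$ and destroy the ellipticity needed for the reduction.

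The main obstacle I anticipate is the ``only if'' step in the horospherical setting. Unlike the flat case treated in Theorem \ref{thm0.1-half}, the conformal factor $\phi$ is non-constant, so integrating $\varphi_0L_r\varphi_0$ by parts produces additional first-order terms involving $\nabla\phi$ that have no Euclidean counterpart over $\ov{\rr^{n+1}_+}$. Reorganizing these terms, via the structure equations of $\hh^{n+1}$ and the constant contact angle condition, into the clean Newton-tensor expression — and verifying that all boundary contributions genuinely cancel — is the technical heart of the proof, together with confirming that the equality case of Newton's inequality is the sole obstruction to umbilicity.
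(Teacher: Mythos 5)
Your overall scheme is the paper's scheme in outline -- a Minkowski-formula test function in the Robin class $\mathcal F$, the stability inequality, and the equality case of Newton--MacLaurin -- and your converse direction (with $P_r$ a positive constant multiple of the identity, reduce $\mathcal E_{r+1}''$ to a positive multiple of $\mathcal E_1''$ and invoke the stability of umbilical caps from \cite{GWX3}) is exactly Proposition \ref{stable-necess}. But in the forward direction you have two genuine gaps. The first is that your test function's normalization is not known to exist. The horospherical analogue of the half-space weight $\omega=1-\cos\theta\langle E_{n+1},\nu\rangle>0$ is $u=V_{n+1}-\cos\theta\,\bar g(x,\nu)$, which has \emph{no sign}, so the constant $\lambda=\left(\int_M u\,dA\right)^{-1}\int_M uH_r\,dA$ in $\varphi_{n+1}=\lambda u-\bar g(X_{n+1},\nu)H_{r+1}$ may be undefined. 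The paper devotes Proposition \ref{thmm4.frwsss112} to exactly this point: if $\int_M u\,dA=0$ then $u$ itself lies in $\mathcal F$, and feeding $u$ into the stability inequality together with the auxiliary function $\Phi=-\bar g(E_{n+1},\nu)$ and the Killing-field flux identity \eqref{integral-b} forces $\Phi$ constant, contradicting $\mathrm{tr}(P_rh^2)>0$ (which is where the elliptic point enters). This use of stability itself to legitimize the test function has no counterpart in your outline. (A side remark: ``not totally geodesic'' is not needed to save ellipticity in the converse -- a geodesic hyperplane has $P_r\equiv 0$ and is trivially stable for $r\geq 1$ -- it is simply incompatible with the standing elliptic-point hypothesis.)

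The second gap is the boundary cancellation, which you correctly flag as the technical heart but do not resolve, and it does not happen by itself. Since $\bar g(x,\nu)$ fails the Robin condition (cf.\ \eqref{Hyp-dddd22}), Green's formula applied to $\int_M\bar g(x,\nu)J_r\varphi_{n+1}$ leaves the boundary integral $\int_{\partial M}P_r^{\mu\mu}\varphi_{n+1}\left(\cot\theta+\csc\theta\, h(\mu,\mu)\right)ds$. The paper kills it only after \emph{adding} the identity $\int_M\left[\Psi L_r\Psi+P_r(\nabla\Psi,\nabla\Psi)\right]=\int_{\partial M}\Psi P_r(\nabla\Psi,\mu)$ for the second auxiliary function $\Psi=-H_{r+1}V_{n+1}-\lambda\,\bar g(E_{n+1},\nu)$, whereupon the combined boundary term vanishes by \eqref{integral-b}, itself derived from two flux computations with the Killing field $x$ (Propositions \ref{prop-integral} and \ref{Proposition-x}). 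Moreover the resulting pointwise quantity is not $H_{r+1}^2-H_rH_{r+2}$ but the quadratic in $\lambda$ of \eqref{xeq2112uinter}, namely $\psi\lambda-\phi H_{r+1}=\lambda^2(n-r-1)\binom{n}{r+1}(H_1H_{r+1}-H_{r+2})+(r+1)\binom{n}{r+1}H_{r+1}^2\left(\frac{H_1}{H_{r+1}}\left(\lambda-\frac{H_{r+1}}{H_1}\right)^2+H_r-\frac{H_{r+1}}{H_1}\right)\geq 0$, which uses \emph{both} $H_1H_{r+1}\geq H_{r+2}$ and $H_1H_r\geq H_{r+1}$; and it enters the stability inequality multiplied by $\bar g(E_{n+1}^T,E_{n+1}^T)$, which can vanish. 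Equality therefore only yields $\bar g(E_{n+1}^T,E_{n+1}^T)(\psi\lambda-\phi H_{r+1})=0$ together with $\Psi$ constant, and one still needs the paper's open-set argument (claim \eqref{sxxa1112}) excluding pieces of $M$ lying on a horosphere before the Newton--MacLaurin equality case delivers umbilicity. Your assertion that equality in Newton's inequality is ``the sole obstruction'' skips precisely this step.
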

Here the elliptic point means all the principal curvatures at this point are positive. The existence of elliptic point guarantees the ellipticity of operator $L_{r}$ (see Proposition \ref{prop-elliptic}). When $r=0$, $L_{0}=\Delta$ is elliptic automatically.
If the hypersurface intersects a horosphere orthogonally, then there must be an elliptic point. Therefore, we have the following classification for stable free boundary constant $(r+1)$-th mean curvature hypersurfaces.
	\begin{corollary}\label{cor0.1}
		Let $1\leq r\leq n-1$. A compact immersed  free boundary constant $(r+1)$-th mean curvature hypersurface  supported on a horosphere in $\mathbb{H}^{n+1}$ is stable if and only if it is totally umbilical and not totally geodesic.
	\end{corollary}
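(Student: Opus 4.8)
The plan is to deduce the corollary from Theorem~\ref{thm0.2-horo} by exhibiting the free boundary condition as the special case of the capillary condition in which the extra hypothesis — the existence of an elliptic point — comes for free. A free boundary constant $(r+1)$-th mean curvature hypersurface is precisely a $(r+1)$-th capillary hypersurface whose contact angle with the supporting horosphere equals $\pi/2$; hence it lies within the scope of Theorem~\ref{thm0.2-horo}, and the equivalence ``stable if and only if totally umbilical and not totally geodesic'' will follow the moment one elliptic point is produced. Thus the whole content of the proof reduces to the geometric claim that a compact hypersurface meeting the horosphere orthogonally must carry a point at which all principal curvatures are positive.

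To establish this claim I would run a horospherical touching (comparison) argument. Let $\partial B=\{b=0\}$ be the supporting horosphere, where $b$ is the Busemann function of the ideal center of $B$, normalized so that the horoball is $B=\{b\geq 0\}$ and every level set $\{b=t\}$ is a horosphere with all principal curvatures equal to $1$. Since $M$ is compact, contained in $\overline{B}$, with $\partial M\subset\partial B=\{b=0\}$, and is not contained in $\partial B$ (by transversality of the contact), the restriction $b|_{M}$ attains a strictly positive maximum at some interior point $p\in M\setminus\partial M$. Here the orthogonality is what I would use to guarantee that the conormal derivative of $b$ along $\partial M$ points out of the horoball, so that the maximum cannot be attained on $\partial M$ and is genuinely interior.

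At $p$ the hypersurface $M$ is tangent to the horosphere $\{b=b(p)\}$ and lies locally on the side $\{b\leq b(p)\}$, that is, it touches this convex horosphere from inside the horoball. Comparing the second fundamental forms at the point of tangency (the usual maximum principle for two tangent hypersurfaces), the principal curvatures of $M$ at $p$, computed with respect to the normal compatible with the global orientation, are each bounded below by the corresponding principal curvature $1$ of the horosphere; in particular they are all positive. Hence $p$ is an elliptic point, the hypothesis of Theorem~\ref{thm0.2-horo} is satisfied, and the asserted classification follows at once.

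I expect the main obstacle to be the careful bookkeeping in the last two steps: confirming that the interior maximum exists and, above all, that the global unit normal $\nu$ with respect to which the $(r+1)$-th mean curvature and the elliptic condition are defined agrees at $p$ with the outward horospherical normal, so that the comparison delivers positive rather than negative principal curvatures. Once the orientation is pinned down by the free boundary geometry, the comparison itself is routine, and the reduction to Theorem~\ref{thm0.2-horo} closes the argument.
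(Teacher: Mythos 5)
Your overall route is exactly the paper's: the corollary is deduced from Theorem \ref{thm0.2-horo} by the single observation that orthogonal contact forces the existence of an elliptic point (the paper asserts this in one sentence, without proof). So everything hinges on your touching argument, and there the comparison inequality goes the wrong way. In the upper half-space model with $\mathcal H=\{x_{n+1}=1\}$ and the horoball $B=\{x_{n+1}\ge 1\}$ (this is the paper's convention, since $\bar N=-E_{n+1}$ is the \emph{outward} normal of $B$), at an interior maximum $p$ of $b=\log x_{n+1}$ the hypersurface $M$ lies locally in $\{b\le b(p)\}$, i.e.\ \emph{outside} the horoball $\{b\ge b(p)\}$. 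Horoballs are convex, so $M$ touches the horosphere $\{b=b(p)\}$ from its concave side, not ``from inside the horoball'' as you use it. The tangency maximum principle then bounds the second fundamental form of $M$ at $p$ from \emph{above}: with respect to the unit normal for which the horosphere has principal curvatures $+1$, one gets $\kappa_i(p)\le 1$, equivalently $\kappa_i(p)\ge -1$ for the opposite orientation --- no positivity at all. A concrete counterexample to your claimed bound: let $M$ be the upper cap of the Euclidean sphere of radius $R>1$ centered at a point of $\mathcal H$. It meets $\mathcal H$ orthogonally, is totally umbilical (a piece of an equidistant hypersurface) with all principal curvatures equal to $1/R\in(0,1)$ with respect to the appropriate normal, so at its top point no choice of orientation yields $\kappa_i\ge 1$. (This $M$ is elliptic everywhere with $\kappa_i=1/R>0$, so the corollary itself is fine; it is only your derivation of the elliptic point that fails, and no strictification can rescue it, since the level sets of $b$ simply curve away from $M$ at the top.)

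The repair is to sweep with a family relative to which $M$ sits on the \emph{convex} side: for instance, take horoballs tangent to $\partial_\infty\mathbb{H}^{n+1}$ at an ideal point different from the center of $B$, or geodesic balls centered at a point of $\mathcal H$, and shrink to the smallest one containing $M$, with first contact at some $q$. Then $M$ lies inside a convex body and touches its boundary from inside, and the same comparison now gives $\kappa_i(q)\ge 1$ (respectively $\kappa_i(q)\ge \coth R>1$) with respect to the outward normal, hence an elliptic point. The free boundary hypothesis enters where you anticipated, but for a different purpose than fixing the orientation: it is used to handle a possible first contact at $q\in\partial M$, where orthogonality (together with Proposition \ref{Principle}, i.e.\ $\mu$ being a principal direction) lets one either exclude boundary tangency or still run the comparison in the conormal direction. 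With an elliptic point so produced, your reduction to Theorem \ref{thm0.2-horo} closes the argument exactly as the paper intends.
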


The proof of Theorem \ref{thm0.2-horo} is based on a higher-order Minkowski-type formula in a horoball in $\hh^{n+1}$ as follows:
\begin{equation}\label{Mink-horo-1}
\int_M [H_{r}\left(V_{n+1}-\cos \theta \bar{g}(x,\nu)\right)-H_{r+1}\bar{g}(X_{n+1},\nu)]\,dA=0,\quad \text{for any}\,\,\, 0\leq r\leq n-1,
\end{equation}
see \cite{GWX3, CP}. 
This formula induces an admissible test function $\varphi_{n+1}\in \mathcal{F}$ (see Proposition \ref{prop-3.1} below). By utilizing the Killing property of position vector field in the hyperbolic space $\hh^{n+1}$, we obtain the desired rigidity result.


\	

This paper is organized as follows. In Section \ref{sec2} we collect some basic properties for elementary symmetric functions.
 In Section \ref{sec3} we calculate the first and second variational formulas of $(r+1)$-th energy functional $\mathcal{E}_{r+1}$ and introduce a new definition of stable $(r+1)$-th capillary hypersurface in space forms for any $1\leq r\leq n-1$.  In Section \ref{sec4} we consider a rigidity of stable $(r+1)$-th capillary hypersurfaces in a Euclidean half-space and then prove Theorem \ref{thm0.1-half} by the higher-order Minkowski-type formula \eqref{aniso-Mink}. In Section \ref{sec5} we focus on the stability of $(r+1)$-th capillary hypersurface supported on a horosphere in hyperbolic space. We prove some useful and powerful geometric formulas for the $(r+1)$-th capillary hypersurfaces supported on a horosphere. By the higher-order Minkowski-type formula \eqref{Mink-horo-1}, we finally construct an admissible test function and prove Theorem \ref{thm0.2-horo} and hence Corollary \ref{cor0.1}.
	
\

\

\section{Preliminaries}\label{sec2}

Let $(\bar M^{n+1}, \bar g)$ be an oriented $(n+1)$-dimensional Riemannian manifold and $B$ be a domain in $\bar M$ with smooth boundary $\p B$ in $\bar M$. Let $x: (M^n, g)\to (\bar M, \bar g)$ be an
isometric immersion of an orientable $n$-dimensional compact manifold $M$ with boundary $\p M$ satisfying $x|_{\p M}: \p M \to \p B$. We say this immersion $x(M)$ is supported on $\partial B$. For the convenience, we do not distinguish $M$ with its image $x(M)$ and $\p M$ with $x(\p M)$ respectively, through all computations are carried out on $M$ by using the pull-back of $x$.


We denote $\bar \n$, $\bar \Delta$ and $\bar \n^2$ by the gradient, the Laplacian and the Hessian on $\bar M$ with respect to $\bar g$ respectively, while $\n$, $\Delta$ and $\n^2$ by the gradient, the Laplacian and the Hessian on $M$ with respect to its induced metric respectively.
We will use the following terminology for four normal vector fields.
We choose one of the unit normal vector field along $x$ and denote it by $\nu$.
We denote $\bar N$ by the unit outward normal to $\p B$ in $B$ and $\mu$ be the unit outward normal to $\p M$ in $M$.
Let $\bar \nu $ be the unit normal to $\p M$ in $\p B$ such that the bases $\{\bar \nu  , \bar N\}$ and $\{\nu, \mu\}$ have the same orientation in the
normal bundle of $\p M\subset \bar M$. See Figure 1, where $B =\rr^{n+1}_{+}$ and $\p B =\rr^{n}$, $n$-dimensional Euclidean space. We denote by $h$ and $h^{\partial B}$  the second fundamental form of $M$ and $\partial B$ in $\bar{M}$ respectively.

\begin{figure}[H]
	\centering
	\includegraphics[height=8cm,width=12cm]{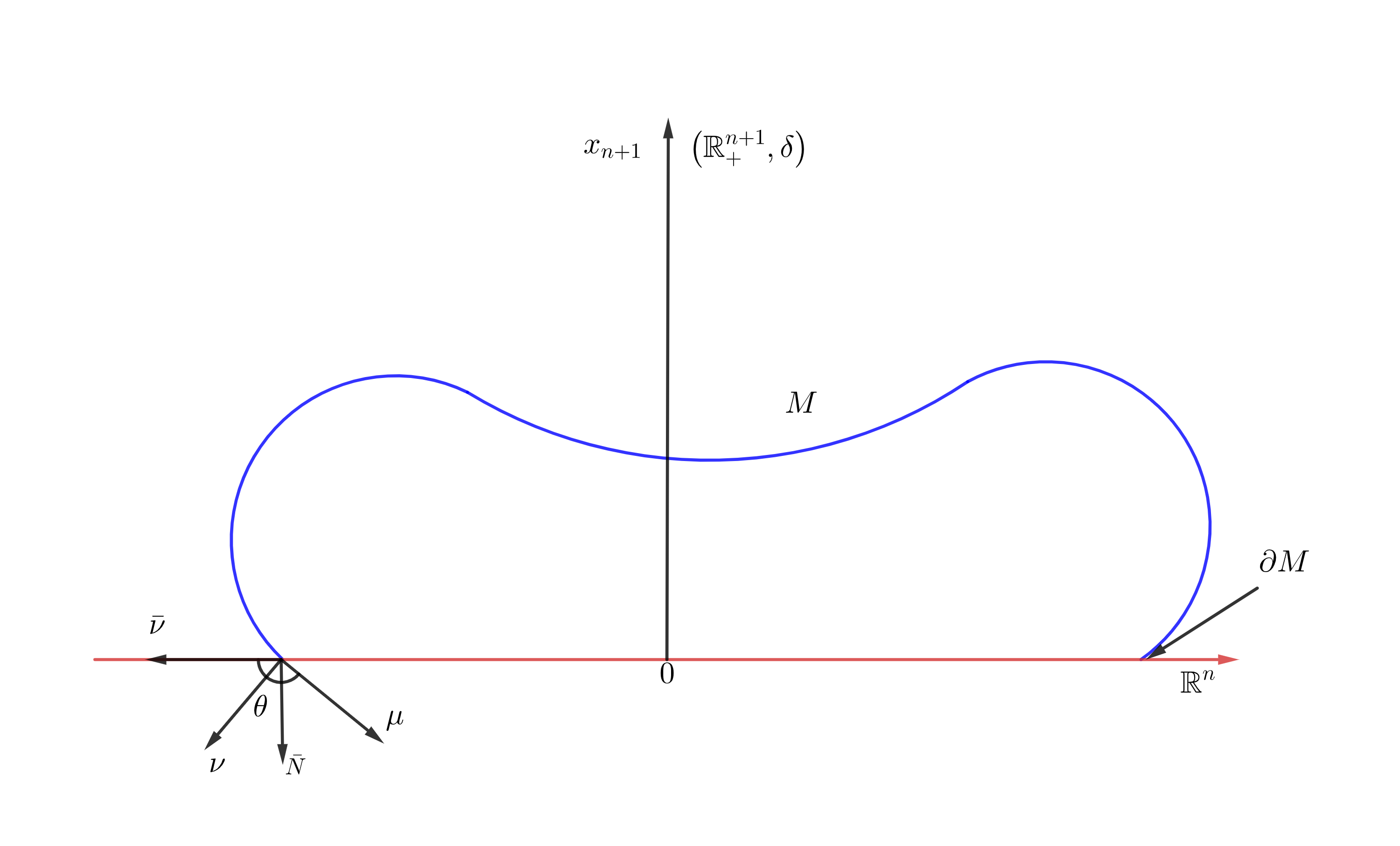}
	\caption{Hypersurface $M$ with contact angle $\theta$ in the half-space $\rr^{n+1}_{+}$.}
\end{figure}

 Under this convention, along $\p M$, the angle between $\mu$ and $\bar \nu$ or equivalently between $\nu$ and $-\bar N$ is equal to $\th$. Precisely, in the normal bundle of $\p M$, we have the following relations:
 \begin{eqnarray}
&&\mu=\sin \th \, \bar N+\cos \th\, \bar \nu ,  \label{mu0}
\\&&\nu=-\cos \th \, \bar N+\sin \th\, \bar \nu  .\label{nu0}
\end{eqnarray}
Equivalently,
 \begin{eqnarray}
&&\bar{N}=\sin \th \, \mu-\cos \th\, \nu,  \label{Nbar}
\\&&\bar \nu =\cos\theta\, \mu+\sin \th\, \nu.\label{nubar}
\end{eqnarray}

Let $\kappa=\left(\kappa_1, \cdots, \kappa_n\right)$ be the vector of principal curvatures of $M$. The $r$-th normalized mean curvature for any $1\leq r\leq n$ is defined by
\begin{equation*}
H_{r}:=\binom{n}{r}^{-1}\sigma_r=\binom{n}{r}^{-1}\sum_{1 \leq i_1<\ldots<i_r \leq n} \kappa_{i_1} \cdots \kappa_{i_r}, \quad \text{where}\,\,\, \binom{n}{r}=\frac{n!}{r!(n-r)!}.
\end{equation*}
It is convenient to set $\sigma_0(\kappa)=1$ and $\sigma_r(\kappa)=0$ for $r>n$.

The Newton tensors are inductively defined to be
\begin{equation}\label{Pr}
 P_{0}=I\quad \text{and} \quad P_{r}=\sigma_{r}I-P_{r-1}h.
\end{equation}
The following is a collection of basic properties about the Newton tensors.
\begin{lemma}[\cite{Guan}]\label{Pr-prop} For any $0\leq r\leq n-1$, we have
\begin{itemize}
 \item [(1)]$P_{r}$ is divergence-free, i.e. $\sum_{j}(P^{ij}_{r})_{,j}=0$;
\item [(2)]${\rm tr}_g(P_{r})=\sum_{i=1}^{n}P_{r}^{ii}=(n-r)\sigma_{r}$;
\item [(3)]${\rm tr}_g(P_{r}h)=\sum_{i,j=1}^{n}P_{r}^{ij}h_{ij}=(r+1)\sigma_{r+1}$;
\item [(4)]${\rm tr}_g (P_rh^{2})=\sum_{i,j,k=1}^{n}P_{r}^{ij}h_{i}^{k}h_{kj}=\sigma_{1}\sigma_{r+1}-(r+2)\sigma_{r+2}$.
\end{itemize}
\end{lemma}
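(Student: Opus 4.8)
The plan is to separate the four assertions into the three purely algebraic identities (2)--(4), which follow from diagonalizing $h$ together with elementary symmetric function combinatorics, and the differential identity (1), which is the genuinely geometric statement and the main obstacle.

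I would first iterate the recursion \eqref{Pr} to obtain the closed form
\[
P_r=\sum_{j=0}^r(-1)^j\sigma_{r-j}\,h^j,
\]
and note that in an orthonormal eigenbasis of $h$ the tensor $P_r$ is diagonal with entries $(P_r)_{ii}=\sum_{j=0}^r(-1)^j\sigma_{r-j}\kappa_i^j=\sigma_r(\kappa\,|\,i)$, where $\sigma_r(\kappa\,|\,i)$ denotes the $r$-th elementary symmetric function of the principal curvatures with $\kappa_i$ omitted. This is immediate from dividing the generating identity $\prod_k(1+t\kappa_k)=\sum_m\sigma_m t^m$ by $(1+t\kappa_i)$ and comparing coefficients of $t^r$.

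With this in hand, (2)--(4) become counting arguments. For (2), $\tr_g P_r=\sum_i\sigma_r(\kappa\,|\,i)$ and each degree-$r$ monomial of $\sigma_r$ is counted once for each of the $n-r$ indices absent from it, giving $(n-r)\sigma_r$. For (3), $\tr_g(P_r h)=\sum_i\kappa_i\sigma_r(\kappa\,|\,i)$, where $\kappa_i\sigma_r(\kappa\,|\,i)$ runs over degree-$(r+1)$ monomials containing $\kappa_i$, so each monomial of $\sigma_{r+1}$ is counted once per factor, yielding $(r+1)\sigma_{r+1}$. For (4) I would use $\sigma_{r+1}=\sigma_{r+1}(\kappa\,|\,i)+\kappa_i\sigma_r(\kappa\,|\,i)$ to write $\kappa_i^2\sigma_r(\kappa\,|\,i)=\kappa_i\bigl(\sigma_{r+1}-\sigma_{r+1}(\kappa\,|\,i)\bigr)$, sum over $i$, and apply (3) with $r$ replaced by $r+1$ to the second sum, arriving at $\sigma_1\sigma_{r+1}-(r+2)\sigma_{r+2}$.

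The hard part is (1), which I would prove by induction on $r$, the base case $P_0=I$ being parallel. For the inductive step I would differentiate the recursion $(P_r)^i_j=\sigma_r\delta^i_j-(P_{r-1})^i_k h^k_j$, use the inductive hypothesis $\nabla_i(P_{r-1})^i_k=0$, and feed in two ingredients: the differentiation rule $\partial\sigma_r/\partial h^k_l=(P_{r-1})^l_k$, which gives $\nabla_j\sigma_r=(P_{r-1})^i_k\nabla_j h^k_i$, and the Codazzi equation, which renders $\nabla h$ totally symmetric so that $(P_{r-1})^i_k\nabla_i h^k_j=(P_{r-1})^i_k\nabla_j h^k_i=\nabla_j\sigma_r$. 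These two contributions cancel and give $\nabla_i(P_r)^i_j=0$. The essential subtlety --- the reason (1) is the real obstacle rather than a formality --- is that $\nabla h$ is totally symmetric precisely when the ambient manifold is a space form, which is exactly the setting of this paper; for a general ambient metric an ambient-curvature term would survive and $P_r$ need not be divergence-free.
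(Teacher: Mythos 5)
The paper states this lemma without proof, citing \cite{Guan}, and your argument is correct in every part: the closed form $P_r=\sum_{j=0}^r(-1)^j\sigma_{r-j}h^j$, the eigenvalue identification $(P_r)_{ii}=\sigma_r(\kappa\,|\,i)$, the counting arguments for (2)--(4), and the inductive proof of (1) via $\partial\sigma_r/\partial h^k_l=(P_{r-1})^l_k$ and the total symmetry of $\nabla h$ from Codazzi in a space form. This is essentially the standard proof given in the cited reference, including your correct observation that (1) genuinely requires the constant-curvature ambient, so nothing further is needed.
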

Let $\Gamma_{l}^{+}$ be the G{\aa}rding cone defined by
\begin{equation}\label{Garding}
  \Gamma_{l}^{+}:=\{\kappa \in \mathbb{R}^{n}|\, \sigma_{i}(\kappa)>0,\quad 1 \leq i \leq l\}.
\end{equation}
The Newton-MacLaurin inequalities are as follows:
\begin{lemma}[\cite{Guan}]\label{Newton-Mac} For any $\kappa\in \Gamma_{l}^{+}$, we have
\begin{equation}\label{Newtonaa}
 H_{k-1}(\kappa) H_{l}(\kappa)\leq H_{k}(\kappa) H_{l-1}(\kappa) ,\quad 1 \leq k<l \leq n.
\end{equation}
Equality holds if and only if $\kappa=c(1,\cdots,1)$ for any constant $c>0$.
\end{lemma}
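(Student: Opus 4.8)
The plan is to reduce the chained inequality to the one-step Newton inequalities and then prove the latter by the classical real-rootedness argument. First I would record that $\kappa\in\Gamma_{l}^{+}$ forces $H_{0}=1,H_{1},\dots,H_{l}>0$, so the claim $H_{k-1}H_{l}\le H_{k}H_{l-1}$ is equivalent, after dividing by the positive quantity $H_{k-1}H_{l-1}$, to the monotonicity statement $\tfrac{H_{k}}{H_{k-1}}\ge \tfrac{H_{l}}{H_{l-1}}$ for $k<l$. Writing $a_{r}:=H_{r}/H_{r-1}$, this follows by transitivity once I show $(a_{r})$ is non-increasing, i.e. $a_{r}\ge a_{r+1}$ for $1\le r\le l-1$; clearing the positive denominators $H_{r-1}H_{r}$, this is exactly the one-step Newton inequality $H_{r}^{2}\ge H_{r-1}H_{r+1}$. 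Thus the whole statement reduces to proving $H_{r}^{2}\ge H_{r-1}H_{r+1}$ for $1\le r\le l-1$, an inequality that in fact holds for arbitrary real $\kappa$, the sign hypotheses being needed only to chain and to divide.

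To prove the one-step inequality I would pass to the generating polynomial $P(t)=\prod_{i=1}^{n}(t+\kappa_{i})=\sum_{k=0}^{n}\binom{n}{k}H_{k}\,t^{n-k}$, which by construction has only real roots $-\kappa_{i}$. Two operations preserve real-rootedness and act transparently on the normalized coefficients: differentiation, since $\tfrac{1}{n}P'(t)=\sum_{k=0}^{n-1}\binom{n-1}{k}H_{k}\,t^{n-1-k}$, so the top coefficients $H_{0},\dots,H_{n-1}$ survive unchanged while the bottom one is dropped and the new roots are real by Rolle's theorem; and reciprocation $t^{\deg}P(1/t)$, which reverses the list $H_{0},\dots,H_{n}$ and keeps the roots real (in our range $r\le l-1$ the relevant constant term $H_{r+1}$ is positive, so no spurious zero root occurs). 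Applying these in the order "differentiate $n-r-1$ times, reciprocate, differentiate $r-1$ times" isolates the three consecutive coefficients as those of the real-rooted quadratic $H_{r+1}t^{2}+2H_{r}t+H_{r-1}$. Its nonnegative discriminant is precisely $H_{r}^{2}\ge H_{r-1}H_{r+1}$. Chaining these as in the first paragraph, with positivity supplied by $\Gamma_{l}^{+}$, yields $H_{k-1}H_{l}\le H_{k}H_{l-1}$.

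For the equality case the \emph{if} direction is immediate: if $\kappa=c(1,\dots,1)$ with $c>0$ then $H_{r}=c^{r}$ and both sides equal $c^{\,k+l-1}$. For the \emph{only if} direction, equality in the chained inequality forces $a_{k}=a_{k+1}=\dots=a_{l}$, hence equality in every intermediate step $H_{r}^{2}=H_{r-1}H_{r+1}$; equivalently the reduced quadratic above acquires a double root. I then trace this degeneracy back through the differentiation–reciprocation reductions using the elementary fact that a real-rooted polynomial of degree at least $2$ has all its roots equal exactly when its derivative does (and that reciprocation preserves "all roots equal" among nonzero roots). A double root at the final stage therefore propagates upward to force $P=(t+c)^{n}$, that is $\kappa_{1}=\dots=\kappa_{n}=c$, and membership in $\Gamma_{l}^{+}$ gives $c>0$.

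The main obstacle is precisely this last step, the propagation of equality: the inequality itself is soft once the reduction to a quadratic discriminant is in place, but showing that a double root at the bottom of the reduction chain forces all $\kappa_{i}$ to coincide requires the sharp behaviour of roots under differentiation (degree at least $2$) and reciprocation, rather than the mere preservation of real-rootedness used for the inequality. I would therefore state this strict form carefully, or, as a cleaner alternative, verify the base case $H_{1}^{2}\ge H_{2}$ directly—it is the nonnegativity of the variance of $\kappa$, with equality iff all $\kappa_{i}$ are equal—and promote the equality characterization one index at a time, so that the equality analysis does not rely on delicate bookkeeping.
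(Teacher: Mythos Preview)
The paper does not supply its own proof of this lemma: it is stated with a citation to \cite{Guan} and used as a black box, so there is no argument in the paper to compare against. Your proposal is the classical proof of the Newton--Maclaurin inequalities and is correct. The reduction of the chained inequality $H_{k-1}H_{l}\le H_{k}H_{l-1}$ to the one-step inequalities $H_{r}^{2}\ge H_{r-1}H_{r+1}$ via monotonicity of $H_{r}/H_{r-1}$ is exactly right, and the positivity needed to divide is furnished by $\kappa\in\Gamma_{l}^{+}$. The real-rootedness argument (differentiate, reciprocate, differentiate) isolating the quadratic $H_{r+1}t^{2}+2H_{r}t+H_{r-1}$ is the standard derivation; your bookkeeping checks out, and your remark that the case $H_{r+1}=0$ is harmless for the inequality is correct since then $H_{r}^{2}\ge 0$ is trivial.

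One small comment on the equality analysis: your propagation step ``$P'$ has all roots equal $\Rightarrow$ $P$ has all roots equal'' is true but deserves the one-line justification you hint at, namely that $P'=d(t-c)^{d-1}$ forces $P=(t-c)^{d}+C$, and a nonzero $C$ would produce non-real roots for $d\ge 2$. With that in hand, the backward induction through the reduction chain is clean, and your alternative of anchoring the equality case at $H_{1}^{2}=H_{2}$ (variance zero) is also perfectly fine. Either way, membership in $\Gamma_{l}^{+}$ then gives $c=H_{1}>0$, matching the stated equality characterization.
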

For any $0\leq r\leq n-1$, we denote a second-order operator $L_{r}: C^{\infty}(M)\rightarrow C^{\infty}(M)$ by
\begin{equation}\label{Lr-operator}
  L_{r}f:=\div_{g}(P_{r}\nabla f)=P_{r}\circ\nabla^{2}f,
\end{equation}
where the divergence free property of $P_{r}$ has been used. If $P_{r}$ is positive definite on each point of $M$, then $L_{r}$ is an elliptic operator. In particular, the Laplacian operator is $\Delta =L_{0}$ is elliptic automatically.
On the other hand, we have the following sufficient condition for the ellipticity of $L_r$.
\begin{prop}[{\rm \cite[Proposition 3.2]{BCo}}]\label{prop-elliptic}
 If $H_{r+1}$ is positive and there exists an elliptic point on $M$, then for any $0\leq j\leq r$,
\begin{itemize}
  \item [(i)]each operator $L_{j}=\div_{g}(P_{j}\nabla\cdot)$ is elliptic;
  \item [(ii)]each $j$-th mean curvature $H_{j}$ is positive.
\end{itemize}
\end{prop}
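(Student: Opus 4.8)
The plan is to reduce both assertions to the single claim that the principal curvature vector $\kappa(p)$ lies in the G{\aa}rding cone $\Gamma_{r+1}^{+}$ for \emph{every} $p\in M$. Granting this, part (ii) is immediate: by the definition \eqref{Garding}, $\kappa\in\Gamma_{r+1}^{+}$ means $\sigma_1(\kappa),\dots,\sigma_{r+1}(\kappa)>0$, hence $H_j>0$ for all $0\le j\le r$. For part (i) I would first record the elementary fact that, in a basis diagonalizing the shape operator $h$ with eigenvalues $\kappa_1,\dots,\kappa_n$, the Newton tensor $P_j$ is diagonal with entries $\sigma_j(\kappa_1,\dots,\widehat{\kappa_i},\dots,\kappa_n)$; this follows by induction from the recursion \eqref{Pr} together with $\sigma_j(\kappa)=\sigma_j(\kappa;i)+\kappa_i\,\sigma_{j-1}(\kappa;i)$. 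Thus positive definiteness of $P_j$, and therefore ellipticity of $L_j=\div_g(P_j\nabla\cdot)$, is equivalent to positivity of these restricted elementary symmetric functions.

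To prove $\kappa(M)\subseteq\Gamma_{r+1}^{+}$, the idea is a connectedness argument resting on the classical structural property of G{\aa}rding cones: $\Gamma_{r+1}^{+}$ is precisely the connected component of the open set $\{\kappa\in\mathbb{R}^n:\sigma_{r+1}(\kappa)>0\}$ containing the positive orthant. Assuming $M$ is connected, the continuous image $\kappa(M)\subset\mathbb{R}^n$ is connected. The hypothesis $H_{r+1}>0$ forces $\kappa(M)\subset\{\sigma_{r+1}>0\}$, while at the elliptic point $p_0$ all principal curvatures are positive, so $\kappa(p_0)$ lies in the positive orthant $\Gamma_n^{+}\subseteq\Gamma_{r+1}^{+}$. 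A connected subset of $\{\sigma_{r+1}>0\}$ that meets the connected component $\Gamma_{r+1}^{+}$ must be contained in it; hence $\kappa(M)\subseteq\Gamma_{r+1}^{+}$, which is exactly what is needed for (ii) and the starting point for (i).

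The last ingredient for (i) is the restriction lemma: if $\kappa\in\Gamma_{r+1}^{+}\subset\mathbb{R}^n$, then $(\kappa_1,\dots,\widehat{\kappa_i},\dots,\kappa_n)\in\Gamma_{r}^{+}\subset\mathbb{R}^{n-1}$ for each $i$, so that $\sigma_j(\kappa;i)>0$ for all $0\le j\le r$ and every eigenvalue of $P_j$ is positive. I would establish this from the identity $\sigma_j(\kappa)=\sigma_j(\kappa;i)+\kappa_i\,\sigma_{j-1}(\kappa;i)$ combined with the Newton--MacLaurin inequalities of Lemma \ref{Newton-Mac}, or else deduce it directly from G{\aa}rding's theory of hyperbolic polynomials. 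The hard part will be precisely these two purely algebraic facts about G{\aa}rding cones---their identification as connected components of $\{\sigma_{r+1}>0\}$ and the restriction property---since the geometric steps (the connectedness/continuity argument and the diagonalization of $P_j$) are routine. These facts are standard in the theory of elementary symmetric functions but are not among the properties recorded in Lemma \ref{Pr-prop} or Lemma \ref{Newton-Mac}, so they would require separate justification or a careful citation.
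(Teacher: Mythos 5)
Your argument is correct, and it is essentially the standard one: the paper itself gives no proof of Proposition \ref{prop-elliptic}, quoting it from \cite{BCo}, and the proof there follows exactly the route you outline --- G{\aa}rding's identification of $\Gamma_{r+1}^{+}$ as the connected component of $\{\sigma_{r+1}>0\}$ containing the positive orthant, a connectedness argument propagating $\kappa(M)\subseteq\Gamma_{r+1}^{+}$ from the elliptic point over all of $M$, and the positivity $\sigma_{j}(\kappa;i)=\partial\sigma_{j+1}/\partial\kappa_{i}>0$ on the cone, which makes each $P_{j}$ positive definite via the diagonalization you describe. The two algebraic facts you flag are precisely G{\aa}rding's classical theorems on hyperbolic polynomials (see also Caffarelli--Nirenberg--Spruck), so your instinct that they call for a citation rather than a fresh proof is right.
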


The following proposition is an elementary fact when $M$ is with capillary boundary on $\p B$ and $\p B$ is totally umbilical in $\bar M$.
 \begin{prop}[{\rm \cite[Proposition 2.1]{WX}}]\label{Principle} Assume  $\p B$ is totally umbilical in $\bar M$. Let $x: M\to \bar M$ be an immersion whose boundary $x(\p M)$ intersects $\partial B$ at a constant angle $\theta\in(0,\pi)$. Then $\mu$ is a principal direction of $\p M$ in $M$. Namely, $h(e, \mu)=0$ for any $e\in T(\p M)$. In particular, for any $0\leq r\leq n-1$,
 \begin{equation}\label{principal-direc}
   P_{r}(e,\mu)=0 \quad\forall e\in T(\p M).
 \end{equation}
\end{prop}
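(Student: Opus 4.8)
The plan is to exploit the constancy of the contact angle $\theta$ along $\partial M$ together with the total umbilicity of $\partial B$. From \eqref{nu0} the angle is encoded by $\cos\theta = -\bar g(\nu, \bar N)$, which is constant along $\partial M$. Differentiating this identity in an arbitrary tangential direction $e\in T(\partial M)$ gives
\begin{equation*}
0 = e\big(\bar g(\nu, \bar N)\big) = \bar g(\bar\nabla_e \nu, \bar N) + \bar g(\nu, \bar\nabla_e \bar N),
\end{equation*}
so the whole argument reduces to analyzing these two terms separately.

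For the second term I would use that $\partial B$ is totally umbilical in $\bar M$: since $e\in T(\partial M)\subset T(\partial B)$, the Weingarten map of $\partial B$ sends $e$ to a multiple of itself, so $\bar\nabla_e \bar N$ is tangent to $\partial B$ and proportional to $e$, hence lies in $T(\partial M)$. As $\nu$ is normal to $M$ and therefore orthogonal to $T(\partial M)$, this forces $\bar g(\nu, \bar\nabla_e \bar N)=0$. This is the one place where umbilicity is genuinely needed: without it, $\bar\nabla_e \bar N$ could acquire a component along $\bar\nu$, on which $\nu$ has nonzero projection by \eqref{nu0}. For the first term I substitute $\bar N = \sin\theta\,\mu - \cos\theta\,\nu$ from \eqref{Nbar} and use $\bar g(\bar\nabla_e \nu, \nu) = \tfrac12 e(|\nu|^2) = 0$, which leaves
\begin{equation*}
0 = \sin\theta\, \bar g(\bar\nabla_e \nu, \mu) = -\sin\theta\, h(e,\mu),
\end{equation*}
the last equality being the definition of the second fundamental form via the shape operator $\bar\nabla_e \nu = -W(e)$. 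Since $\theta\in(0,\pi)$ we have $\sin\theta\neq 0$, whence $h(e,\mu)=0$ for every $e\in T(\partial M)$, which is precisely the assertion that $\mu$ is a principal direction of $\partial M$ in $M$.

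Finally, for the Newton-tensor conclusion \eqref{principal-direc}, I would note that $h(e,\mu)=0$ together with the orthogonal splitting $T M|_{\partial M} = T(\partial M)\oplus \mathbb{R}\mu$ shows that $\mu$ is an eigenvector of the shape operator $W$ and that $W$ preserves $T(\partial M)$. Since each $P_r$ is, by the inductive definition \eqref{Pr}, a polynomial in $W$, it commutes with $W$ and preserves the same invariant subspaces; in particular $P_r\mu$ is again proportional to $\mu$, so $P_r(e,\mu)=\bar g(P_r e,\mu)=0$ for all $e\in T(\partial M)$. I expect the main difficulty to be purely bookkeeping — keeping straight the sign conventions in \eqref{mu0}–\eqref{nubar} and the definitions of $h$, $h^{\partial B}$ and $W$ — rather than anything conceptual, since the essential geometric input is the single cancellation produced by total umbilicity.
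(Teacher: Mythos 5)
Your proposal is correct and is essentially the standard argument: the paper itself quotes this result from \cite[Proposition 2.1]{WX} without reproving it, and the proof there is exactly your computation --- differentiate $\bar g(\nu,\bar N)=-\cos\theta$ along $e\in T(\partial M)$, kill the $\bar\nabla_e\bar N$ term by total umbilicity of $\partial B$, and conclude $\sin\theta\, h(e,\mu)=0$, with the Newton-tensor claim \eqref{principal-direc} following since each $P_r$ is a polynomial in the shape operator. The only discrepancy is your sign convention $\bar\nabla_e\nu=-W(e)$, which is opposite to the paper's Gauss--Weingarten convention $\bar\nabla_{e_i}\nu=h_{ik}e_k$ used in the appendix, but this is harmless since the relevant quantity is set equal to zero.
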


\

\section{$(r+1)$-th capillary hypersurfaces and stabilities}\label{sec3}

In this section, we introduce a new notion of stability of $(r+1)$-th capillary hypersurfaces.
Let $\bar M=\mathbb{M}^{n+1}(K)$ be a complete simply-connected $(n+1)$-dimensional Riemannian manifold with constant sectional curvature $K$ and $\partial B$ be a totally umbilical hypersurface in $\mathbb{M}^{n+1}(K)$ with constant principal curvature $\kappa\in\mathbb{R}$ respectively. By a choice of orientation, we can assume $\kappa\in[0,\infty)$. It is a well-known fact that in the Euclidean space and the spherical space-form, geodesic spheres $(\kappa>0)$ and totally geodesic hyperplanes $(\kappa=0)$ are all complete totally umbilical hypersurfaces, while in the hyperbolic space the family of all complete totally umbilical hypersurfaces includes geodesic spheres $(\kappa>1)$, totally geodesic hyperplanes $(\kappa=0)$, horospheres $(\kappa=1)$ and equidistant hypersurfaces $(0<\kappa<1)$ (see e.g. \cite{RAF, RAF3}), among which the horospheres and the equidistant hypersurfaces are non-compact ones.

Let  $x: (-\epsilon, \epsilon)\times M\to \mathbb{M}^{n+1}(K)$ be a differentiable map such that $x(t, \cdot): M\to \mathbb{M}^{n+1}(K)$ is an immersion satisfying $x(t, \p M)\subset\partial B$ for every $t\in (-\ep, \ep)$ and $x(0, \cdot)=x$. $x(t, M)$ is called an admissble variation of $x(0, M)=x(M)$.

The $r$-th area functional  $A_{r}: (-\ep, \ep)\to\rr$ for any $0\leq r\leq n-1$ and the volume functional $V: (-\ep, \ep)\to\rr$ are defined by
\begin{eqnarray*}
&&A_{r}(t)=\int_{M} \sigma_{r}\, dA_t,\\
&&V(t)=\int_{[0,t]\times M} x^*dV,
\end{eqnarray*}
where $dA_t$ is the area element of $M$  with respect to the metric induced by $x(t, \cdot)$ and $dV$ is the volume element of $\mathbb{M}^{n+1}(K)$.
A variation is said to be volume-preserving if $V(t)=V(0)=0$ for each $t\in (-\ep, \ep)$.

Let $Y$ be an admissible variational vector field of $x$ with normal vector field $f\nu$, i.e.
\begin{equation*}
  \frac{\partial x}{\partial t}:=Y=Y^{T}+f\nu,
\end{equation*}
where $Y^{T}$ is tangent to $M$. From \eqref{var-formula} below, we have the first variation formulae of $A_{r}(t)$ and $V(t)$ as follows:
\begin{eqnarray}
A_{r}'(t)&=&(r+1)\int_M \sigma_{r+1}f dA_{t}-K(n-r+1)\int_{M}\sigma_{r-1}fdA_{t}\label{Ar-1}\\
&&+\int_{\partial M}\left(\sigma_{r}\bar{g}(Y,\mu)-\frac{\partial\sigma_{r}}{\partial h_{\mu}^{i}}\nabla_{i} f\right)ds_{t},\nonumber\\
V'(t)&=&\int_M f\, dA_{t}.\label{Vol-1}
\end{eqnarray}
In particular, when $r=0$, we see that
\begin{equation}\label{r=0}
 A_{0}'(t)=\int_{M}\sigma_{1}f\,dA_{t}+\int_{\partial M}\bar{g}(Y,\mu)\, ds_{t}.
\end{equation}
It follows that $A_{0}'(0)=0$ with volume-preserving variation if and only if $M$ is a CMC hypersurface with boundary intersecting $\partial B$ orthogonally, which is exactly a free boundary CMC hypersurface.

However when $r\geq1$, we can not characterize constant $(r+1)$-th mean curvature hypersurface with a constant perpendicular contact angle only by \eqref{Ar-1} and \eqref{Vol-1} directly. Because the integral boundary terms in \eqref{Ar-1} contain $r$-th mean curvature and its derivative. Therefore the key point of this problem is how to define higher-order capillary hypersurface by the variational method reasonably.
In the following, we will give a naturally geometric variational definition for higher-order capillary hypersurface.

We define $r$-th wetting area functional ${W}_{r}: (-\epsilon, \epsilon)\rightarrow \mathbb{R}$ inductively by
\begin{eqnarray}
{W}_{0}(t):=\int_{\partial M\times[0,t]}x^{*}dA_{\partial B},\quad
{W}_{1}(t):=\frac{1}{n}\int_{\partial M}ds_{t},
\end{eqnarray}
and for $2\leq r \leq n-1$,
\begin{equation}\label{wetting-def-sec}
 {W}_{r}(t):=\frac{1}{n}\int_{\partial M}H^{\partial M}_{r-1}ds_{t}+\frac{r-1}{n-r+2}(K+\kappa^{2})W_{r-2}(t),
 \end{equation}
where $dA_{\partial B}$ is the area element of $\partial B$ and $ds_t$ is the area element of $\partial M$ with respect to the metric induced by $x|_{\partial M}(t, \cdot)$ and $H^{\partial M}_{r-1}$ is normalized $(r-1)$-th mean curvature of $\partial M$ in $\partial B$.

Fix a real number $\theta\in (0, \pi)$, we define the $(r+1)$-th energy functional $\mathcal{E}_{r+1}: (-\ep, \ep)\to\rr$ are defined inductively by
\begin{eqnarray*}
\mathcal{E}_{0}(t):=(n+1)V(t),\quad
\mathcal{E}_{1}(t):=A_{0}(t)-\cos \theta W_{0}(t),
\end{eqnarray*}
and for $1 \leq r \leq n-1$,
\begin{equation}\label{Er0099}
  \mathcal{E}_{r+1}(t):=Q_{r+1}(t)+\frac{rK}{n+2-r}\mathcal{E}_{r-1}(t),
\end{equation}
where
\begin{eqnarray}\label{Qr-def-app}
&&Q_{r+1}(t):=\binom{n}{r}^{-1}A_{r}(t)-\cos\theta\sin^{r}\theta W_{r}(t)\\
&&\qquad\qquad-\cos^{r-1}\theta\sum_{l=0}^{r-1}{\frac{(-1)^{r+l}}{n-l}}\kappa^{r-l}\binom{r}{l}\left[(n-r)\cos^{2}\theta+(r-l)\right]\tan^{l}\theta W_{l}(t).\nonumber
\end{eqnarray}
The definition for the higher-order energy functional is motivated by the following first variational formula.
\begin{theorem}\label{thm105-half}
Let $x(\cdot, t): M\to \mathbb{M}^{n+1}(K), t\in (-\epsilon, \epsilon)$ be a family of immersion  supported on $\partial B$ at a constant contact angle $\th \in (0, \pi)$. Assume
\begin{equation}\label{formula106}
  \left(\partial_{t} x\right)^{\perp}=f \nu,
\end{equation}
for  $f\in C^\infty(M)$. Then
\begin{equation}\label{formula107-half}
  \frac{d}{d t} \mathcal{E}_{r+1}(t)=(n-r)\int_{M} H_{r+1} f d A_{t}.
\end{equation}
\end{theorem}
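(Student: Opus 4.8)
The plan is to compute the derivative of $\mathcal{E}_{r+1}(t)$ by induction on $r$, using the explicit first variation formulae \eqref{Ar-1} and \eqref{Vol-1} for the area and volume functionals, together with a first variation formula for the wetting area functionals $W_r$. The base cases $r=0$ (i.e. $\mathcal{E}_1$) and, after normalization, the passage from $\mathcal{E}_{r-1}$ to $\mathcal{E}_{r+1}$ via \eqref{Er0099} are the scaffolding; the genuine content is showing that the boundary integrals that appear in $A_r'(t)$ — which involve $\sigma_r$, its derivative $\tfrac{\partial \sigma_r}{\partial h_\mu^i}\nabla_i f$, and the $(r-1)$-th mean curvature of $\partial M$ in $\partial B$ — cancel exactly against the first variations of the wetting terms $-\cos\theta\sin^r\theta\, W_r$ and the $\kappa$-weighted lower-order terms in \eqref{Qr-def-app}. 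The miracle to engineer is that all boundary contributions and all lower-order interior curvature terms ($K\sigma_{r-1}$ from \eqref{Ar-1}) vanish, leaving only the clean interior term $(n-r)\int_M H_{r+1} f\, dA_t$.

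First I would establish a first variation formula for $W_r(t)$. Differentiating the inductive definition \eqref{wetting-def-sec} reduces this to differentiating $\int_{\partial M} H^{\partial M}_{r-1}\, ds_t$, which is itself an $(r-1)$-th area functional of the $(n-1)$-dimensional hypersurface $\partial M \subset \partial B$. Here the decomposition $Y = Y^T + f\nu$ must be re-expressed on $\partial M$ using the normal-bundle relations \eqref{mu0}–\eqref{nubar}: the component of $Y$ normal to $\partial M$ inside $\partial B$ is governed by $\bar g(Y,\bar\nu) = \cos\theta\, \bar g(Y,\mu) + \sin\theta\, f$. The totally umbilical hypothesis on $\partial B$ (principal curvature $\kappa$) and Proposition \ref{Principle}, which says $\mu$ is a principal direction so that $h(e,\mu)=0$ and $P_r(e,\mu)=0$ for $e\in T(\partial M)$, are what let the off-diagonal boundary terms drop and let the principal curvatures of $\partial M$ in $\partial B$ be related back to those of $M$. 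This is where the factors $\sin^r\theta$, $\cos\theta$, the binomial weights and the $(K+\kappa^2)$ coefficients in \eqref{wetting-def-sec} and \eqref{Qr-def-app} originate, and getting these algebraic constants to match is the most delicate bookkeeping.

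The hard part will be the boundary cancellation: after collecting $A_r'(t)$, the $W_r'(t)$ contributions, and the lower-order $W_l'(t)$ terms, one must verify that the full boundary integral over $\partial M$ vanishes identically under the constant-angle constraint, using $h(e,\mu)=0$ to convert $\tfrac{\partial\sigma_r}{\partial h_\mu^i}\nabla_i f$ and $\sigma_r\,\bar g(Y,\mu)$ into expressions involving only $H^{\partial M}_{r-1}$ and $\kappa$. I expect to use Lemma \ref{Pr-prop} repeatedly — especially the divergence-free property (1) and the trace identities (2),(3) — to simplify the interior terms and to rewrite $\tfrac{\partial\sigma_r}{\partial h_\mu^i}$ in terms of Newton tensor components $P_{r-1}^{i\mu}$. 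The inductive coefficient $\tfrac{rK}{n+2-r}$ in \eqref{Er0099} is precisely calibrated so that the two surviving $K$-weighted interior terms — the $-K(n-r+1)\int_M\sigma_{r-1}f$ piece of \eqref{Ar-1} and the contribution coming through $\mathcal{E}_{r-1}$ — annihilate each other. Once both the interior $K$-terms and the entire boundary integral are shown to cancel, only the $(r+1)\int_M\sigma_{r+1}f\,dA_t$ term from \eqref{Ar-1} survives; rewriting $\binom{n}{r}^{-1}(r+1)\sigma_{r+1} = (n-r)H_{r+1}$ yields \eqref{formula107-half}, completing the induction.
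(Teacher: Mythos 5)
Your proposal follows essentially the same route as the paper's Appendix \ref{app}: you derive the first variation of $W_r(t)$ by viewing $\partial M$ as a hypersurface in the space form $\partial B$ of intrinsic curvature $K+\kappa^2$ with normal speed $\bar g(Y,\bar\nu)=f/\sin\theta$, cancel the boundary terms of $A_r'(t)$ against the wetting terms in \eqref{Qr-def-app} via Proposition \ref{Principle}, the umbilicity relation between $h|_{\partial M}$ and $\hat h$, and the binomial expansion, and then use the coefficient $\tfrac{rK}{n+2-r}$ in \eqref{Er0099} inductively to annihilate the $-rK\int_M H_{r-1}f\,dA_t$ term — exactly the paper's argument, including the final identity $\binom{n}{r}^{-1}(r+1)\sigma_{r+1}=(n-r)H_{r+1}$. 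No substantive deviation or gap to report.
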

This variational formula has been derived by Wang, Weng and the third author \cite{WeX,WWX1} in the Euclidean half-space and ball case. We postpone the proof of Theorem \ref{thm105-half} to Appendix \ref{app}.

Note that we assume all the immersions $x(\cdot, t)$ in the variational class intersect $\p B$ at a constant angle. We call such variation is an angle-preserving variation.


Now we show an existence theorem for volume-preserving and angle-preserving admissible variations.
Denote a function space as follows:
\begin{eqnarray}\label{stable1-pre}
\mathcal{F}:=\left\{\vp\in C^{\infty}(M)\Big| \int_M \vp\, dA=0\,\,\text{and}\,\,\nabla_{\mu}\varphi=q\varphi\,\,\text{on}\,\, \partial M \right\},
\end{eqnarray}
where
\begin{equation}\label{qsdsdvb}
  q=\kappa\csc\theta+\cot\theta h(\mu,\mu).
\end{equation}

Considering a volume-preserving and angle-preserving admissible variation with variational field having $\vp\nu$ as its normal part. One can see  from \eqref{Vol-1} that $\int_M \varphi\, dA=0$. For the capillary boundary condition $\bar{g}(\nu,\bar N\circ x)=-\cos\theta$, we can get from \eqref{angle-derivative-1} and \eqref{angle-derivative-2} below that
\begin{equation*}\label{derivate-e}
\n_\mu\varphi-q\varphi=-\csc\theta\,\partial_{t}\bar{g}(\nu,\bar{N}\circ x)=0, \,\,\,\,\text{along}\,\, \partial M.
\end{equation*}
Therefore, $\vp\in \mathcal{F}$.

Conversely, we have in the following that $\vp\in \mathcal{F}$ induces a volume-preserving and angle-preserving admissible variation.

\begin{prop}\label{volume-preserving-1} Let $x: M \rightarrow \bar{M}=\mathbb{M}^{n+1}(K)$ be an immersion with boundary $x(\p M)$ intersects $\partial B$ at a constant angle $\theta\in(0,\pi)$.  Then for a given $\varphi \in \mathcal{F}$, there exists an admissible volume-preserving and contact angle-preserving variation of $x$ with variational vector field having $\varphi \nu$ as its normal part.
\end{prop}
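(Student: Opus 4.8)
The plan is to build the variation in two stages: first produce an admissible, exactly angle-preserving variation realizing the prescribed first-order normal speed $\varphi$, and then correct it to be volume-preserving. To begin, I record the boundary data forced on any admissible variation with normal part $\varphi\nu$. Writing the velocity along $\p M$ as $Y=Y_{\p M}+a\,\mu+b\,\nu$ with $Y_{\p M}\in T(\p M)$, the condition $\bar g(Y,\nu)=\varphi$ gives $b=\varphi$, while tangency to $\p B$, i.e. $\bar g(Y,\bar N)=0$, together with \eqref{mu0}--\eqref{nu0} forces $a\sin\theta-\varphi\cos\theta=0$, so $a=\varphi\cot\theta$. Thus along $\p M$ the normal-plane part of the velocity is pinned to $\varphi\,\nu+\varphi\cot\theta\,\mu$. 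I would then extend $\varphi\nu$ to a vector field $X$ on $\bar M$ whose restriction to $M$ has normal component $\varphi\nu$, whose boundary value is $\varphi\,\nu+\varphi\cot\theta\,\mu$, and which is everywhere tangent to $\p B$; its flow $\phi_t$ maps $\p B$ into itself, so $x_0(t,\cdot):=\phi_t\circ x$ is admissible with $(\p_t x_0)^\perp|_{t=0}=\varphi\nu$. By the relation $\n_\mu\varphi-q\varphi=-\csc\theta\,\p_t\bar g(\nu,\bar N\circ x)$ recorded before the statement (with $q$ as in \eqref{qsdsdvb}), the contact angle $\Theta_0(t,\cdot)$ of $x_0$ satisfies $\Theta_0(0,\cdot)=\theta$ and $\p_t\Theta_0(0,\cdot)=0$, the vanishing being precisely the boundary condition defining $\mathcal{F}$ in \eqref{stable1-pre}; hence $x_0$ is angle-preserving to first order.

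Upgrading $x_0$ to angle preservation for \emph{all} $t$ is the heart of the argument. Here I would exploit that $\p B$ is totally umbilical: near $\p M$ I would reconstruct the variation geometrically by emanating the surface from its moving foot set $\p M(t)\subset\p B$ along ambient geodesics whose initial directions make the fixed angle $\theta$ with $-\bar N$ (the $\mu$-direction), so the contact angle equals $\theta$ by construction, with the foot-point motion and collar profile chosen to match the interior flow $x_0$; Proposition \ref{Principle} (that $\mu$ is a principal direction) keeps this collar consistent. Equivalently, one may retain $x_0$ and compose it with a family of diffeomorphisms of $\bar M$ that preserve $\p B$, are supported near $\p B$, and tilt $M(t)$ at the boundary: since the correction's effect on $\p_t\Theta$ is a nondegenerate zeroth-order operator and $\p_t\Theta_0(0,\cdot)=0$, the implicit function theorem yields a smooth correction with vanishing first-order effect. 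Either way one obtains an admissible variation $x_1(t,\cdot)$ with contact angle identically $\theta$ and $(\p_t x_1)^\perp|_{t=0}=\varphi\nu$.

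For the volume correction I would fix an auxiliary function $\psi$ satisfying the Robin condition $\n_\mu\psi=q\psi$ on $\p M$ but with $\int_M\psi\,dA\neq0$, run the first stage for $\psi$, and assemble a two-parameter admissible angle-preserving family $\Xi(t,s,\cdot)$ whose $\p_t$- and $\p_s$-directions have normal speeds $\varphi$ and $\psi$ at $t=s=0$. With $V(t,s)$ the enclosed volume, \eqref{Vol-1} gives $\p_s V(0,0)=\int_M\psi\,dA\neq0$, so the implicit function theorem solves $V(t,s(t))=0$ with $s(0)=0$; moreover $s'(0)=-\big(\int_M\varphi\,dA\big)\big/\big(\int_M\psi\,dA\big)=0$ because $\int_M\varphi\,dA=0$ for $\varphi\in\mathcal{F}$. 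Then $t\mapsto\Xi(t,s(t),\cdot)$ is admissible, angle-preserving, and volume-preserving, and since $s'(0)=0$ its first-order normal part is still $\varphi\nu$, as required.

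The main obstacle is the exact angle preservation in the first stage: maintaining $\Theta(t,\cdot)\equiv\theta$ is an infinite family of pointwise constraints along $\p M$ rather than one scalar condition, so the first-order matching that $\varphi\in\mathcal{F}$ directly supplies is not by itself enough. Overcoming this is exactly where total umbilicity of $\p B$ and the principal-direction property of Proposition \ref{Principle} are used, either to enforce the angle by the geometric collar construction or to guarantee the nondegeneracy driving the implicit function theorem; the two conditions in \eqref{stable1-pre} enter respectively through $s'(0)=0$ in the volume step and through $\p_t\Theta(0,\cdot)=0$ in the angle step.
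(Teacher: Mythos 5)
Your boundary analysis and first stage coincide with the paper's construction: the pinned boundary velocity $\varphi\,\nu+\varphi\cot\theta\,\mu$ is exactly the paper's \eqref{Y-1000}, your ambient extension tangent to $\p B$ is the paper's field $\eta=\gamma+\nu$ (with $\gamma$ built from the projection $\nu_0=\nu+\cos\theta\,\bar N$), and your first-order angle computation is precisely \eqref{angle-derivative-1}--\eqref{angle-derivative-2} with $q$ as in \eqref{qsdsdvb}. Where you genuinely diverge is the volume step: the paper does not use a two-parameter family with an auxiliary $\psi$ and the implicit function theorem. Following Barbosa--do Carmo--Eschenburg, it reparametrizes the flow of $\eta$ pointwise in time, solving the ODE $\partial_t\rho=\varphi/E(\rho(t,\cdot),\cdot)$ with $\rho(0,\cdot)=0$, where $E$ is the Jacobian factor in $\Theta^{*}dV_{\bar M}$; this gives $V(\tilde\Theta(t,\cdot))=t\int_M\varphi\,dA=0$ \emph{identically}, so volume preservation is exact for all $t$ with no transversality argument, no auxiliary function, and no need to verify that a second parameter direction is also angle-preserving (a burden your construction silently incurs, since every slice $\Xi(t,s)$ of your family must meet $\p B$ at angle $\theta$). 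Your IFT route is a standard and correct alternative for the volume constraint, but here the reparametrization trick is strictly simpler. You also omit the paper's reduction of the immersed case to the embedded one via the pull-back metric $\tilde x^{*}(\bar g)$ on $(-\epsilon,\epsilon)\times M$, though that is minor.

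The substantive weakness is your ``stage 2.'' As written it is not a proof: the implicit-function variant rests on the unexplained assertion that the correction acts on $\partial_t\Theta$ through a ``nondegenerate zeroth-order operator'' --- you specify neither the map between function spaces, nor its linearization, nor why it is invertible --- and the geodesic-collar variant never explains how the collar is matched to the interior flow while keeping the normal speed equal to $\varphi$ at $t=0$ and the deformation smooth. You should be aware, however, that the paper performs no such second stage at all: its entire treatment of angle preservation is the computation you already carried out in stage 1, namely $\partial_t\bar g(\nu,\bar N)=\sin\theta\left(-\nabla_\mu\varphi+q\varphi\right)=0$ for $\varphi\in\mathcal F$, from which it concludes that the contact angle is preserved along the flow $\zeta_t$. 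So your stage 1 together with a volume correction already reaches parity with the published argument; the additional ``exact preservation for all $t$'' that your final paragraph identifies as the heart of the matter is left unproved in your proposal, and the two mechanisms you sketch for it would each need to be developed into an actual argument before they add anything beyond the first-order statement.
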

\begin{proof}
 We argue as in \cite[Lemma 2.2]{BCE} (also see \cite[Proposition 2.1]{AS}).
We first assume $x: M \rightarrow \bar{M}$ is embedded. For each point $p \in \partial M$, let $\nu_0=\nu+\cos \theta \bar{N}$ be the projection of $\nu$ on $T_{x(p)}(\partial B)$. Denote $\gamma=\frac{1}{\bar{g}\left(\nu, \nu_0\right)} \nu_0-\nu$ which is tangential to $x(M)$ along $\partial M$. We extend $\gamma$ to a smooth vector field on $x(M)$ and still denote by $\gamma$. We let $\eta=\gamma+\nu$ and extend $\eta$ smoothly to a vector field on $U$, which is a $\delta$-neighborhood of $x(M)$ in $\bar{M}$, such that $\eta$ is tangential to $T(\partial B)$ along $\partial B \cap \bar{U}$. By our construction, we see $\bar{g}(\eta, \nu)=1$. Consider the local flow $\zeta_t$ of $\eta$ in $\bar{U}$ satisfying $\frac{\partial}{\partial t} \zeta_t=\eta$. Let $\Theta:\left(-\epsilon, \epsilon\right) \times M \rightarrow \bar{M}$ be given by $\Theta(t, \cdot)=\zeta_t$. We shall find a function $\rho:(-\epsilon, \epsilon) \times M \rightarrow \mathbb{R}$ such that
\begin{equation*}
\tilde{\Theta}(t, \cdot)=\Theta(\rho(t, \cdot), \cdot)
\end{equation*}
is the desired deformation.

First, since $\zeta_t$ is the local flow of $\eta$ and $\eta$ is tangential to $T(\partial B)$ along $\partial B \cap \bar{U}$, we know $\tilde{\Theta}(t, \partial M) \subset \partial B$.
Second, since
\begin{equation*}
\tilde{\Theta}^* d V_{\bar{M}}=\frac{\partial \rho}{\partial t} \Theta^* d V_{\bar{M}}=\frac{\partial \rho}{\partial t} E(\rho(t, \cdot), \cdot) d t d A_{M},
\end{equation*}
where $E(\rho(t, \cdot), \cdot)=\operatorname{det}\left(\left.d \Theta\right|_{(\rho(t, \cdot), \cdot)}\right)$, we have
$$
V(\tilde{\Theta}(t, \cdot))=\int_{[0, t] \times M} \tilde{\Theta}^* d V_{\bar{M}}=\int_{M} \int_0^t \frac{\partial \rho}{\partial t} E(\rho(t, \cdot), \cdot) d t d A_{M} .
$$
Let $\rho(t, \cdot):(-\epsilon, \epsilon) \times M \rightarrow \mathbb{R}$ be the local solution of the following initial value problem:
$$
\frac{\partial \rho}{\partial t}=\frac{\varphi}{E(\rho(t, \cdot), \cdot)}\quad \text{and} \quad \rho(0, \cdot)=0 \,\,\text { in } M.
$$
It follows from the condition $\int_{M} \varphi\, d A=0$ that $V(\tilde{\Theta}(t, \cdot))=0$, that is, $\tilde{\Theta}(t, \cdot)$ is a volume preserving admissible deformation. Now we can easily check that
\begin{equation}\label{sasasqq}
  Y:=\left.\frac{\partial}{\partial t}\right|_{t=0} \tilde{\Theta}(t, \cdot)=\left.\frac{\partial \rho}{\partial t}\right|_{t=0} \cdot \eta(0, \cdot)=\varphi(\gamma+\nu):=Y^{T}+\varphi\nu,
\end{equation}
which means the variational vector field of $\tilde{\Theta}(t, \cdot)$ has $\varphi \nu$ as its normal part.

In immersion case, we shall first construct an admissible variation $\tilde{x}:(-\epsilon, \epsilon) \times$ $M \rightarrow \bar{M}$ and endow $(-\epsilon, \epsilon) \times M$ with the pull-back metric $\tilde{x}^*(\bar{g})$ and it is enough to prove the result for $(-\epsilon, \epsilon) \times M$ endowed with $\tilde{x}^*(\bar{g})$, which is the embedded case.

Finally, since $\tilde{\Theta}(t, \cdot)$ is an admissible variation. From \cite[Appendix]{RS}(also see \eqref{jiut}-\eqref{Ymu4532} in Appendix) along $\partial M$ we have
\begin{equation}\label{Y-1000}
  Y=Y^{T}+\varphi\nu:=Y^{\p M}+\cot\theta\varphi\mu+\varphi\nu=Y^{\p M}+\frac{\varphi}{\sin\theta}\bar{\nu}.
\end{equation}
Here $Y^{\partial M}$ denotes the tangent part of $Y$ to $\partial M$.

By \eqref{mu0}, \eqref{nu0} and the fact that $\partial_{t} \nu=-\nabla \varphi+d\nu\circ Y^{T}$, we have
\begin{eqnarray}\label{angle-derivative-1}
  \partial_{t}\bar{g}(\nu,\bar{N})&=&\bar{g}(\partial_{t}\nu,\bar{N})+\bar{g}(\nu,\partial_{t}\bar{N}) \\
  &=&\bar{g}(\partial_{t}\nu,\sin\theta\mu-\cos\theta\nu)+\bar{g}(\sin\theta\bar{\nu}-\cos\theta\bar{N},\partial_{t}\bar{N}\rangle\nonumber\\
  &=&\sin\theta\bar{g}(\partial_{t}\nu,\mu)+\sin\theta h^{\partial B}(Y,\bar{\nu})\nonumber\\
  &=&\sin\theta(-\nabla_{\mu}\varphi+h(Y^{T},\mu))+h^{\partial B}(Y,\bar{\nu}))\nonumber\\
  &=&\sin\theta\left(-\nabla_{\mu}\varphi+\cot\theta h(\mu,\mu)\varphi+\frac{1}{\sin\theta}h^{\partial B}(\bar{\nu},\bar{\nu})\varphi\right),\nonumber
\end{eqnarray}
where the last equality we used \eqref{Y-1000}, Proposition \ref{Principle} and the fact $\partial B$ is totally umbilical.

By the assumption $\varphi\in\mathcal{F}$, we get
\begin{equation}\label{angle-derivative-2}
\partial_{t}\bar{g}(\nu,\bar{N})=\sin\theta\left(-\nabla_{\mu}\varphi+q\varphi\right)=0.
\end{equation}
Therefore, the boundary contact angle is preserved along the local flow $\zeta_t$.
\end{proof}

Next we define a $(r+1)$-th capillary hypersurface and its stability for any $0\leq r\leq n-1$.
\begin{definition}\label{defcapillary}An immersion is said to be $(r+1)$-th capillary if it is a critical point of the $(r+1)$-th energy functional $\mathcal{E}_{r+1}$ for any volume-preserving and angle-preserving variation of $x$.
\end{definition}

In view of Theorem \ref{thm105-half} we see that a $(r+1)$-th capillary hypersurface is constant $(r+1)$-th mean curvature $H_{r+1}$ and constant contact angle along its boundary. In particular, when the contact angle is $\pi/2$, the hypersurface is called a free boundary constant $(r+1)$-th curvature hypersurface.

\begin{definition}
A $(r+1)$-th capillary hypersurface is called stable if $\mathcal{E}_{r+1}''(0)\ge 0$ for all volume-preserving and angle-preserving  admissible variations.
\end{definition}

For a volume-preserving and angle-preserving admissible variation with variational field having $\vp\nu$ as its normal part, we see from Proposition \ref{formula111} in Appendix (also see \cite[Proposition 4.1]{BCo}) that
 \begin{equation}\label{sigeman}
  \partial_{t}\sigma_{r+1}=-L_{r}\varphi-tr(P_{r}h^2)\varphi-Ktr(P_{r})\varphi+\nabla_{(\partial x/\partial t)^{T}}\sigma_{r+1}.
\end{equation}
Here $P_{r}$ is defined by \eqref{Pr}. Thus the second variational formula of $\mathcal{E}_{r+1}$ is given by
\begin{eqnarray}\label{stab-ineq}
\mathcal{E}_{r+1}''(0)&=&(n-r)\binom{n}{r+1}^{-1}\left[\int_M\sigma_{r+1}'\varphi dA+\int_M\sigma_{r+1}\frac{d}{dt}\Big|_{t=0}(\varphi dA_{t})\right]\\
&=&(n-r)\binom{n}{r+1}^{-1}\left[\int_M\sigma_{r+1}'\varphi dA+\sigma_{r+1}V''(0)\right]\nonumber\\
&=&-(n-r)\binom{n}{r+1}^{-1}\int_M\vp[L_{r}\varphi+tr(P_{r}h^2)\varphi+Ktr(P_{r})\vp] dA,\nonumber
\end{eqnarray}
where we used $\sigma_{r+1}$ is constant and $x$ is volume-preserving.

From the above calculation we have the following proposition.
\begin{prop}
 A $(r+1)$-th capillary hypersurface is stable if and only if \begin{eqnarray}\label{stable1-half}
-\int_M\vp[L_{r}\varphi+tr(P_{r}h^2)\varphi+Ktr(P_{r})\vp] dA\ge 0, \quad \forall \vp\in\mathcal{F},
\end{eqnarray}
where $\mathcal{F}$ is the functional space given by \eqref{stable1-pre}.
\end{prop}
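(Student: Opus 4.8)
The plan is to translate the defining sign condition $\mathcal{E}_{r+1}''(0)\ge 0$ into the stated integral inequality by identifying the admissible normal components with the function space $\mathcal{F}$. Since $x$ is $(r+1)$-th capillary, $\sigma_{r+1}$ (equivalently $H_{r+1}$) is constant, so the first variation formula \eqref{formula107-half} reads $\mathcal{E}_{r+1}'(t)=(n-r)\binom{n}{r+1}^{-1}\int_M\sigma_{r+1}f\,dA_t$. Differentiating once more at $t=0$ and using that the variation is volume-preserving, so that the term $\sigma_{r+1}V''(0)$ drops out, leaves $\mathcal{E}_{r+1}''(0)=(n-r)\binom{n}{r+1}^{-1}\int_M\sigma_{r+1}'\varphi\,dA$, as recorded in the computation \eqref{stab-ineq}.

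Next I would insert the evolution equation \eqref{sigeman} for $\sigma_{r+1}'=\partial_t\sigma_{r+1}$. The tangential transport term $\nabla_{(\partial x/\partial t)^{T}}\sigma_{r+1}$ vanishes after pairing with $\varphi$ because $\sigma_{r+1}$ is constant, yielding precisely the right-hand side of \eqref{stab-ineq}. The key structural point is that the resulting quadratic form depends only on the normal component $\varphi$ of the variation field, and not on its tangential part or on higher-order data of the deformation; this is exactly what evaluation at a critical point buys us, and it is what makes stability a condition on functions rather than on variations.

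It then remains to let $\varphi$ range over the correct class. Here I would invoke Proposition \ref{volume-preserving-1}: every $\varphi\in\mathcal{F}$ is realized as the normal component of some admissible volume-preserving and angle-preserving variation, while conversely, as shown in \eqref{angle-derivative-1}--\eqref{angle-derivative-2} together with $\int_M\varphi\,dA=0$, every such variation has its normal component in $\mathcal{F}$. Thus the admissible normal components are exactly the elements of $\mathcal{F}$. Since the prefactor $(n-r)\binom{n}{r+1}^{-1}$ is strictly positive for $0\le r\le n-1$, the condition $\mathcal{E}_{r+1}''(0)\ge 0$ for all admissible variations is equivalent to the stated inequality for all $\varphi\in\mathcal{F}$.

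The main obstacle is not a computation but the two-way correspondence between admissible variations and $\mathcal{F}$, already secured by Proposition \ref{volume-preserving-1}: one must know both that $\nabla_\mu\varphi=q\varphi$ is the exact infinitesimal form of the angle-preserving constraint and that any $\varphi$ with $\int_M\varphi\,dA=0$ is realizable by a genuine volume-preserving deformation. Granting these, the proposition is immediate from the positivity of the prefactor.
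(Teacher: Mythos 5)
Your proposal is correct and follows essentially the same route as the paper: the paper derives \eqref{stab-ineq} by differentiating the first variation formula at the critical point, using that $\sigma_{r+1}$ is constant (so the transport term in \eqref{sigeman} vanishes and the remaining term is $\sigma_{r+1}V''(0)=0$ for volume-preserving variations), and then invokes Proposition \ref{volume-preserving-1} for the two-way correspondence between admissible variations and $\mathcal{F}$, exactly as you do. The only cosmetic remark is that $\nabla_{(\partial x/\partial t)^{T}}\sigma_{r+1}$ vanishes pointwise (gradient of a constant), not merely ``after pairing with $\varphi$.''
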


\begin{remark}
We have no boundary integral term in the the second variation formula because we restrict to  the angle-preserving variations.
For the classical capillary theory, that is $r=0$,  the second variation formula involves boundary integral because there is no such restriction (see e.g. \cite{Ba, CK, LiXiong, LiXiong2, WXsurvey}).
\end{remark}

\

\section{Rigidity for stable $(r+1)$-th capillary hypersurfaces in a half-space}\label{sec4}
In this section we consider the case  $B$ is a Euclidean half-space $\overline{\rr^{n+1}_{+}}$, where
 \begin{eqnarray}\label{half-space555}
\rr^{n+1}_{+}=\{x=(x_{1}, x_{2},\cdots,x_{n+1})\in \rr^{n+1}: x_{n+1}>0\}.
\end{eqnarray}
\begin{prop}\label{stable-necess-half} Any spherical caps in the half-space $\overline{\mathbb{R}^{n+1}_{+}}$ are stable $(r+1)$-th capillary hypersurfaces.
\end{prop}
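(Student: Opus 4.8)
The plan is to exploit the fact that a spherical cap is totally umbilical and to collapse the $(r+1)$-th stability quadratic form into a positive multiple of the classical ($r=0$) one. Write $h_{ij}=c\,g_{ij}$ with $c>0$ for the orientation making the principal curvatures positive, so that all $\kappa_i=c$. Then by induction on \eqref{Pr} one gets $P_r=\binom{n-1}{r}c^r I$, hence $L_r\varphi=\binom{n-1}{r}c^r\Delta\varphi$, while Lemma \ref{Pr-prop}(4) gives $\operatorname{tr}(P_rh^2)=\sigma_1\sigma_{r+1}-(r+2)\sigma_{r+2}=n\binom{n-1}{r}c^{r+2}$. Since $K=0$ in $\overline{\rr^{n+1}_+}$ and $\p B=\rr^n$ is totally geodesic ($\kappa=0$), the boundary coefficient \eqref{qsdsdvb} reduces to $q=c\cot\theta$. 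Substituting into the second variation expression from the displayed stability proposition, every factor of $\binom{n-1}{r}c^r$ comes out, so that
\begin{equation*}
\mathcal{E}_{r+1}''(0)=-(n-r)\binom{n}{r+1}^{-1}\binom{n-1}{r}c^r\int_M\varphi\left(\Delta\varphi+nc^2\varphi\right)dA.
\end{equation*}
As the prefactor $C_{n,r}:=(n-r)\binom{n}{r+1}^{-1}\binom{n-1}{r}c^r$ is strictly positive, the whole statement reduces to proving $-\int_M\varphi(\Delta\varphi+nc^2\varphi)\,dA\ge 0$ for all $\varphi\in\mathcal F$.

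Next I would integrate by parts, using that $\varphi\in\mathcal F$ satisfies $\nabla_\mu\varphi=q\varphi=c\cot\theta\,\varphi$ on $\p M$, to rewrite the target inequality as
\begin{equation*}
Q(\varphi):=\int_M|\nabla\varphi|^2\,dA-nc^2\int_M\varphi^2\,dA-c\cot\theta\int_{\p M}\varphi^2\,ds\ \ge\ 0 .
\end{equation*}
To see that $0$ is the sharp constant, I would introduce the test functions $\varphi_a=\langle a,\nu\rangle$ for $a\in\rr^n$ tangent to $\p B$. Each generates a translation that is an ambient isometry preserving $\p B$ and the contact angle, hence an angle-preserving variation; by the derivation of $\eqref{angle-derivative-2}$ these satisfy $\nabla_\mu\varphi_a=q\varphi_a$, and since the translation preserves the enclosed volume one has $\int_M\varphi_a\,dA=0$ (divergence theorem, as $\langle a,E_{n+1}\rangle=0$), so $\varphi_a\in\mathcal F$. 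Moreover, being normal components of Killing fields on a CMC hypersurface in a flat space, they are Jacobi fields, $\Delta\varphi_a+|h|^2\varphi_a=\Delta\varphi_a+nc^2\varphi_a=0$; integrating by parts with the Robin condition gives $Q(\varphi_a)=0$. Thus the inequality, if true, is attained and the cap is exactly marginally stable in these $n$ directions.

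Finally I would establish $Q\ge 0$. The cleanest route is to observe that $Q$ is \emph{identically} the classical capillary second-variation form for $r=0$: on a totally geodesic support hyperplane the term involving $h^{\p B}$ drops out, leaving precisely $\int_M(|\nabla\varphi|^2-|h|^2\varphi^2)-c\cot\theta\int_{\p M}\varphi^2$ over volume-preserving variations $\{\int_M\varphi\,dA=0\}\supseteq\mathcal F$. Nonnegativity of this form on spherical caps is exactly Souam's Theorem \ref{souam-thm}, which finishes the proof. For a self-contained argument I would instead analyze the Robin eigenvalue problem $-\Delta u=\lambda u$ in $M$, $\nabla_\mu u=c\cot\theta\,u$ on $\p M$: by the rotational symmetry of the cap one separates variables into spherical harmonics $Y_k$ on the geodesic spheres $\p M\cong S^{n-1}$, the angular index $k=1$ reproducing the $\varphi_a$ at eigenvalue $nc^2$; one then shows every mean-zero mode has Rayleigh quotient at least $nc^2$.

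The main obstacle is precisely this last spectral comparison, and it is genuinely delicate rather than formal. The first Robin eigenvalue $\lambda_1$ is strictly below $nc^2$ (its eigenfunction is positive, whereas the $\varphi_a$ at level $nc^2$ change sign), so $Q\ge 0$ does \emph{not} follow by merely discarding the lowest mode. The point is that the constraint in $\mathcal F$ is mean-zero, $\langle\varphi,1\rangle=0$, not orthogonality to the positive ground state, and $1$ is not a Robin eigenfunction when $\theta\neq\pi/2$. The nonnegativity therefore hinges on the explicit spectrum of the cap: that all angular modes with $k\ge 1$ sit at eigenvalue $\ge nc^2$ while the radial ($k=0$) modes below $nc^2$ carry nonzero mean and are ruled out by the constraint. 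Carrying this out amounts to an ODE analysis of the radial eigenvalue problem (or, equivalently, to reproving the $r=0$ stability of Souam), which is the crux of the argument; the reduction in the first two paragraphs is the routine part.
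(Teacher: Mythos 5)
Your proof is correct and follows essentially the same route as the paper: both use total umbilicity to compute $P_r=\binom{n-1}{r}\Lambda^r I$ and $\operatorname{tr}(P_rh^2)=n\binom{n-1}{r}\Lambda^{r+2}$, thereby collapsing $\mathcal{E}_{r+1}''(0)$ into the positive multiple $\frac{(r+1)(n-r)}{n}\Lambda^r\,\mathcal{E}_1''(0)$ of the classical form, and then conclude by citing the known $r=0$ stability of spherical caps. The only cosmetic difference is the reference for that last step — the paper invokes the minimizing property of caps from \cite{GMT}, while you invoke the ``if'' direction of Souam's Theorem \ref{souam-thm} (noting $\mathcal{F}$ sits inside the mean-zero class); your additional spectral discussion is not needed once that citation is made.
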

\begin{proof}
Let $\Sigma$ be a spherical cap in $\overline{\mathbb{R}^{n+1}_{+}}$ whose principal curvatures are all equal to $\Lambda>0$. Then
\begin{equation*}
  \sigma_{r}=\binom{n}{r}\Lambda^{r}.
\end{equation*}
It follows from Lemma \ref{Pr-prop} that
\begin{equation*}
L_{r}\varphi=\binom{n-1}{r}\Lambda^{r}\Delta \varphi
\end{equation*}
and
\begin{equation*}
tr(P_{r}h^{2})=\sigma_{1}\sigma_{r+1}-(r+2)\sigma_{r+2}=n\binom{n-1}{r}\Lambda^{r+2}.
\end{equation*}
Hence from \eqref{stab-ineq}, for any $\varphi\in\mathcal{F}$ we have
\begin{eqnarray}\label{staqq}
\mathcal{E}_{r+1}''(0)&=&-(n-r)\binom{n}{r+1}^{-1}\int_\Sigma(L_{r}\varphi+tr(P_{r}h^2)\varphi)\vp\, dA\\
&=&-\frac{(r+1)(n-r)}{n}\Lambda^{r}\int_\Sigma(\Delta\varphi+n\Lambda^{2}\varphi)\vp\, dA\nonumber\\
&=&\frac{(r+1)(n-r)}{n}\Lambda^{r}\mathcal{E}_{1}''(0).\nonumber
\end{eqnarray}
Recall that a spherical cap in $\overline{\mathbb{R}^{n+1}_{+}}$ is a stable capillary hypersurface and minimize the energy functional $\mathcal{E}_{1}$ (see \cite{GMT}). Therefore, $\mathcal{E}_{1}''(0)\ge 0$. It follows that $\mathcal{E}_{r+1}''(0)\ge 0$ and $\Sigma$ is stable $(r+1)$-th capillary hypersurface. The proof is finished.
\end{proof}

We need the following higher-order Minkowski-type formula in $\overline{\rr^{n+1}_{+}}$ from \cite{WWX1}.
\begin{prop}[{\rm \cite[Proposition 2.6]{WWX1}}] \label{Minkhorol}  Let $x: M\to \overline{\mathbb{R}^{n+1}_{+}}$ be an immersed hypersurface  whose boundary intersects $\partial\mathbb{R}^{n+1}_{+}$ at a constant contact angle $\th \in (0, \pi)$. Then
\begin{equation}\label{Mink1-half}
  \int_M[(1-\cos \theta \langle E_{n+1},\nu\rangle)H_{r}-\langle x,\nu\rangle H_{r+1}]\,dA=0, \quad \text{for any}\,\,\,0\leq r\leq n-1,
\end{equation}
where $E_{n+1}=(0,\cdots,0,1)\in\overline{\rr^{n+1}_{+}}$ and $x$ is position vector field in $\overline{\rr^{n+1}_{+}}$.
\end{prop}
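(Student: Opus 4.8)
The plan is to derive \eqref{Mink1-half} from the divergence theorem on $M$ applied to a tangential field built from the two natural ambient vector fields in $\overline{\rr^{n+1}_{+}}$: the position vector $x$, which satisfies $\bar\nabla_Y x=Y$, and the constant field $E_{n+1}$, which satisfies $\bar\nabla_Y E_{n+1}=0$. Writing $x=x^{T}+\langle x,\nu\rangle\nu$ and $E_{n+1}=E_{n+1}^{T}+\langle E_{n+1},\nu\rangle\nu$ for the splittings into parts tangent and normal to $M$, the Weingarten relation $\bar\nabla_{e_i}\nu=\sum_j h_i^{j}e_j$ gives $\nabla_{e_i}x^{T}=e_i-\langle x,\nu\rangle h_i^{j}e_j$ and $\nabla_{e_i}E_{n+1}^{T}=-\langle E_{n+1},\nu\rangle h_i^{j}e_j$. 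Since each $P_r$ is divergence-free (Lemma~\ref{Pr-prop}(1)), contracting with $P_r$ and using $\mathrm{tr}_g(P_r)=(n-r)\sigma_r$ and $\mathrm{tr}_g(P_rh)=(r+1)\sigma_{r+1}$ from Lemma~\ref{Pr-prop}(2),(3) yields the interior identities
\begin{equation*}
\mathrm{div}_g(P_r x^{T})=(n-r)\sigma_r-(r+1)\langle x,\nu\rangle\sigma_{r+1},\qquad \mathrm{div}_g(P_{r-1}E_{n+1}^{T})=-r\,\langle E_{n+1},\nu\rangle\sigma_r.
\end{equation*}

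Next, for $1\le r\le n-1$, I would form the combined tangential field
\begin{equation*}
W_r:=\frac{1}{n-r}P_r x^{T}+\frac{\cos\theta}{r}P_{r-1}E_{n+1}^{T},
\end{equation*}
whose divergence is exactly $\binom{n}{r}$ times the target integrand, namely
\begin{equation*}
\mathrm{div}_g(W_r)=(1-\cos\theta\langle E_{n+1},\nu\rangle)\sigma_r-\tfrac{r+1}{n-r}\langle x,\nu\rangle\sigma_{r+1}=\binom{n}{r}\big[(1-\cos\theta\langle E_{n+1},\nu\rangle)H_r-\langle x,\nu\rangle H_{r+1}\big],
\end{equation*}
where I used $\frac{r+1}{n-r}\binom{n}{r+1}=\binom{n}{r}$. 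Integrating over $M$ and applying the divergence theorem then reduces the statement to proving that the boundary term $\int_{\partial M}\langle W_r,\mu\rangle\,ds$ vanishes. (The case $r=0$ is the classical capillary Minkowski formula and would be handled directly.)

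For the boundary term I would invoke the capillary structure. By Proposition~\ref{Principle}, $\mu$ is a principal direction, so $P_r\mu$ and $P_{r-1}\mu$ are parallel to $\mu$; hence $\langle P_r x^{T},\mu\rangle=P_r(\mu,\mu)\langle x,\mu\rangle$ and similarly for the $E_{n+1}$-term. Along $\partial M$ the outward normal of $\partial B=\{x_{n+1}=0\}$ is $\bar N=-E_{n+1}$ and $x$ lies in $\partial B$, so $\langle x,\bar N\rangle=0$; together with $\mu=\sin\theta\,\bar N+\cos\theta\,\bar\nu$ from \eqref{mu0} and $\bar\nu\perp E_{n+1}$ this yields $\langle x,\mu\rangle=\cos\theta\,\langle x,\bar\nu\rangle$ and $\langle E_{n+1},\mu\rangle=-\sin\theta$. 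Consequently the boundary term collapses to
\begin{equation*}
\int_{\partial M}\langle W_r,\mu\rangle\,ds=\cos\theta\left[\frac{1}{n-r}\int_{\partial M}P_r(\mu,\mu)\langle x,\bar\nu\rangle\,ds-\frac{\sin\theta}{r}\int_{\partial M}P_{r-1}(\mu,\mu)\,ds\right].
\end{equation*}

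The main obstacle is showing this bracket vanishes. For the free boundary case $\theta=\pi/2$ it is immediate, since the whole expression carries the factor $\cos\theta$. For general $\theta$ I would establish the identity $\frac{1}{n-r}\int_{\partial M}P_r(\mu,\mu)\langle x,\bar\nu\rangle\,ds=\frac{\sin\theta}{r}\int_{\partial M}P_{r-1}(\mu,\mu)\,ds$ by a divergence argument on the closed hypersurface $\partial M\subset\partial B\cong\rr^n$: here $P_r(\mu,\mu)=\sigma_r(\kappa\mid\mu)$ is the $r$-th symmetric function of the principal curvatures of $M$ in the directions tangent to $\partial M$, and $\langle x,\bar\nu\rangle$ is the support function of $\partial M$ in $\rr^n$. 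Integrating the intrinsic divergence of the Newton operator of $h|_{T\partial M}$ paired with the tangential position field over the closed $\partial M$, and controlling the non-Codazzi defect of $h|_{T\partial M}$ through the Codazzi equation of $M$ and the relation $h(e,\mu)=0$ of Proposition~\ref{Principle}, should produce precisely this identity. I expect this last step---the exact matching of the two boundary integrals via the lower-dimensional divergence structure---to be the most delicate part; the simplification $\langle x,\bar\nu\rangle=\csc\theta\,\langle x,\nu\rangle$, valid along $\partial M$, is likely to streamline the computation.
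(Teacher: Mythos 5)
Your reduction is sound and, modulo the step you flag, complete: the interior identities $\div(P_r x^{T})=(n-r)\sigma_r-(r+1)\langle x,\nu\rangle\sigma_{r+1}$ and $\div(P_{r-1}E_{n+1}^{T})=-r\langle E_{n+1},\nu\rangle\sigma_r$ are correct, the normalization $\frac{r+1}{n-r}\binom{n}{r+1}=\binom{n}{r}$ checks out, and the boundary reductions $\langle x,\mu\rangle=\cos\theta\,\langle x,\bar\nu\rangle$, $\langle E_{n+1},\mu\rangle=-\sin\theta$ follow from Proposition \ref{Principle}, \eqref{mu0} and $\langle x,\bar N\rangle=0$. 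The one identity you leave open is true, and your instinct to prove it on $\partial M$ works with far less machinery than you anticipate: since $\partial B=\p\rr^{n+1}_{+}$ is totally geodesic ($\kappa=0$) and $\theta$ is constant, along $\partial M$ one has $h_{\alpha\beta}=\sin\theta\,\hat h_{\alpha\beta}$ in directions tangent to $\partial M$ (this is \eqref{relations} in the Appendix with $\kappa=0$, where $\hat h$ is the second fundamental form of the closed hypersurface $\partial M\subset\partial B\cong\rr^{n}$). Hence $P_r^{\mu\mu}=\sigma_r(h\mid h_{\mu\mu})=\sin^{r}\theta\,\sigma_r(\hat h)$ and $P_{r-1}^{\mu\mu}=\sin^{r-1}\theta\,\sigma_{r-1}(\hat h)$, so after cancelling $\sin^{r}\theta$ your bracket identity is exactly the classical Minkowski formula for the closed hypersurface $\partial M\subset\rr^{n}$: integrating
\begin{equation*}
\div_{\partial M}\bigl(\hat P_{r-1}\,x^{T_{\partial M}}\bigr)=(n-r)\,\sigma_{r-1}(\hat h)-r\,\sigma_{r}(\hat h)\langle x,\bar\nu\rangle
\end{equation*}
over the closed manifold $\partial M$ gives $r\int_{\partial M}\sigma_{r}(\hat h)\langle x,\bar\nu\rangle\,ds=(n-r)\int_{\partial M}\sigma_{r-1}(\hat h)\,ds$, which is your identity. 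There is no ``non-Codazzi defect'' to control: $\hat h$ is a Codazzi tensor because $\rr^{n}$ is flat, and $h|_{T\partial M}$ is a constant multiple of it, so its Newton tensors are divergence-free on $\partial M$.

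With that step filled in, your argument is a genuinely different route from the one the paper relies on. The paper cites \cite[Proposition 2.6]{WWX1} for this statement, and its own proof of the hyperbolic analogue in Section \ref{sec5} (like the proof in \cite{WWX1}) never descends to $\partial M$: instead it kills the boundary term by a second divergence computation on $M$ using the rotation-type field $Z=\langle x,\nu\rangle E_{n+1}-\langle E_{n+1},\nu\rangle x$ (the Euclidean counterpart of $Z=\bar g(x,\nu)\bar E_{n+1}-\bar g(\bar E_{n+1},\nu)x$ in Proposition \ref{Proposition-x}), whose $P_r$-divergence is $-(n-r)\sigma_r\langle E_{n+1},\nu\rangle$ and whose $\mu$-component along $\partial M$ equals $-\langle x,\bar\nu\rangle$, yielding $(n-r)\int_M\sigma_r\langle E_{n+1},\nu\rangle\,dA=\int_{\partial M}P_r^{\mu\mu}\langle x,\bar\nu\rangle\,ds$; combined with the $x^{T}$-divergence this gives \eqref{Mink1-half} at once, including $r=0$ (which your scheme must treat separately since your $W_r$ carries a $1/r$). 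Your route buys the conceptual bonus that the boundary obstruction is itself a lower-dimensional Minkowski formula; the paper's route avoids the geometry of $\partial M$ entirely and transplants verbatim to the horosphere setting of Section \ref{sec5}, where $\partial B$ is umbilical but not totally geodesic and the proportionality $h|_{T\partial M}=\sin\theta\,\hat h$ picks up the extra term $-\kappa\cos\theta\,\delta_{\alpha\beta}$ that would complicate your reduction.
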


Next we derive the equations for several geometric quantities. We denote the $r$-th Jacobi operator by
\begin{equation}\label{Irf11}
  J_{r}f:=L_{r}f+tr(P_{r}h^{2})f, \quad \text{for any}\,\,0\leq r\leq n-1.
\end{equation}
\begin{prop}\label{prop4.66-half} Let $x: M\to\overline{\mathbb{R}^{n+1}_{+}}$ be an immersed hypersurface. Then
the following identities hold on $M$
	\begin{eqnarray}
	J_{r}(\langle E_{n+1},\nu\rangle)&=&\langle E_{n+1},\nabla\sigma_{r+1}\rangle,\label{Ennu}\\
	J_{r}(\langle x,\nu\rangle)&=&\langle x,\nabla \sigma_{r+1}\rangle+(r+1)\sigma_{r+1}.\label{Xmu1}
	\end{eqnarray}
\end{prop}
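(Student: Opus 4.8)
The plan is to prove both identities by a direct tensorial computation in a local orthonormal frame $\{e_i\}$ on $M$, chosen geodesic at the point under consideration, and to exploit that the ambient space is flat ($K=0$), so that $J_r=L_r+\tr(P_rh^2)$ carries no extra curvature term and, by \eqref{Lr-operator} together with Lemma \ref{Pr-prop}(1), $L_rf=\sum_{i,k}(P_r)^{ik}(\nabla^2 f)_{ik}$. The two functions $\langle E_{n+1},\nu\rangle$ and $\langle x,\nu\rangle$ have the same shape — inner products of $\nu$ with, respectively, a parallel field and the position field — so I would treat them in parallel, writing $a$ for a constant vector in one case and $x$ in the other, and track the single place where the position field behaves differently. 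Throughout I use the sign convention $\bar\nabla_{e_i}\nu=\sum_j h_{ij}e_j$, equivalently the Gauss formula $\bar\nabla_{e_i}e_j=\nabla_{e_i}e_j-h_{ij}\nu$, which is the one compatible with the stated signs.

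First I would compute the gradients. The Weingarten relation gives $\nabla\langle a,\nu\rangle=S(a^T)$, i.e. $(\nabla\langle a,\nu\rangle)_i=\sum_j h_{ij}a^j$ with $a^j=\langle a,e_j\rangle$, and likewise $\nabla\langle x,\nu\rangle=S(x^T)$, since the term $\langle e_i,\nu\rangle$ vanishes. Differentiating once more, the Gauss formula yields at the geodesic point $e_i\langle a,e_j\rangle=-h_{ij}\langle a,\nu\rangle$ for constant $a$, whereas $e_i\langle x,e_j\rangle=\delta_{ij}-h_{ij}\langle x,\nu\rangle$; the extra Kronecker term is precisely the source of the $(r+1)\sigma_{r+1}$ discrepancy between the two formulas. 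Combining these with the Codazzi equation (full symmetry of $\nabla h$ in flat space) gives
\[
(\nabla^2\langle a,\nu\rangle)_{ik}=\sum_j(\nabla_i h_{kj})a^j-(h^2)_{ik}\langle a,\nu\rangle,
\]
and the same expression for $x$ with an additional term $h_{ik}$.

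The decisive step is the contraction against $P_r$. I would invoke the standard derivative identity $\partial\sigma_{r+1}/\partial h_{ik}=(P_r)^{ik}$ for elementary symmetric functions of the shape operator, so that $\nabla_j\sigma_{r+1}=\sum_{ik}(P_r)^{ik}\nabla_j h_{ik}$; together with Codazzi this rearranges into $\sum_{ik}(P_r)^{ik}\nabla_i h_{kj}=\nabla_j\sigma_{r+1}$. Applying $\sum_{ik}(P_r)^{ik}(\cdot)_{ik}$ to the Hessians, and noting that $\nabla\sigma_{r+1}$ is tangential so that $\sum_j a^j\nabla_j\sigma_{r+1}=\langle a,\nabla\sigma_{r+1}\rangle$, produces $L_r\langle a,\nu\rangle=\langle a,\nabla\sigma_{r+1}\rangle-\tr(P_rh^2)\langle a,\nu\rangle$ in the constant case and $L_r\langle x,\nu\rangle=\langle x,\nabla\sigma_{r+1}\rangle+\tr(P_rh)-\tr(P_rh^2)\langle x,\nu\rangle$, where $\tr(P_rh)=(r+1)\sigma_{r+1}$ by Lemma \ref{Pr-prop}(3). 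Adding back $\tr(P_rh^2)\langle\,\cdot\,,\nu\rangle$ to pass from $L_r$ to $J_r$ cancels the support-function terms and yields exactly \eqref{Ennu} and \eqref{Xmu1}. The only genuinely delicate point is the contraction identity $\sum_{ik}(P_r)^{ik}\nabla_i h_{kj}=\nabla_j\sigma_{r+1}$, which relies on using the total symmetry of $\nabla h$ to move the free index; I would therefore state Codazzi explicitly and keep careful track of index placement there, after which everything else is bookkeeping once the gradient and Hessian formulas are established.
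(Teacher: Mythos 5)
Your proposal is correct and follows essentially the same route as the paper: a pointwise computation in a geodesic orthonormal frame using the Weingarten and Gauss formulas (with the same sign conventions), in which the Codazzi equation enters through the Newton tensor and the extra term $\delta_{ij}$ from differentiating the position field produces the $(r+1)\sigma_{r+1}$ via $\operatorname{tr}(P_r h)=(r+1)\sigma_{r+1}$. The only cosmetic difference is that you invoke the contraction identity $P_r^{ik}\nabla_i h_{kj}=\nabla_j\sigma_{r+1}$ (via $\partial\sigma_{r+1}/\partial h_{ik}=P_r^{ik}$ and Codazzi), whereas the paper substitutes $P_r h=\sigma_{r+1}I-P_{r+1}$ and uses the divergence-free property of $P_{r+1}$ from Lemma \ref{Pr-prop}; given $\operatorname{div}P_r=0$, these two statements are equivalent.
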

\begin{proof}
The above formulas have been shown in \cite{BCo}. For the convenience of reader, we give a direct computation.

For a fixed point $p\in M$, let $\{e_{i}\}_{i=1}^{n}$ at $p$ be the local orthonormal basis and $\nabla_{e_{i}}e_{j}|_{p}=0$.
In the following we calculate at $p$. We have
\begin{eqnarray*}
J_{r}(\langle \nu, E_{n+1}\rangle)&=&L_{r}(\langle\nu, E_{n+1}\rangle)+\langle \nu, E_{n+1}\rangle tr(P_{r}h^{2})\\
&=&(P_{r}^{ij}h_{jk}\langle e_{k},E_{n+1}\rangle)_{,i}+\langle\nu,E_{n+1}\rangle P_{r}^{ij}h^{2}_{ij}\\
&=&[(\sigma_{r+1}\delta_{ik}-P_{r+1}^{ik})\langle e_{k},E_{n+1}\rangle]_{,i}+\langle\nu,E_{n+1}\rangle(\sigma_{r+1}\delta_{ik}-P_{r+1}^{ik})h_{ik}\\
&=&\sigma_{r+1,i}\langle e_{i},E_{n+1}\rangle+\sigma_{r+1}(\langle \bar{\nabla}_{e_{i}}e_{i}, E_{n+1}\rangle+\langle \nu, E_{n+1}\rangle tr(h))\\
&&-(P_{r+1}^{ik}\langle \bar{\nabla}_{e_{i}}e_{k},E_{n+1}\rangle+\langle\nu,E_{n+1}\rangle tr(P_{r+1}h))\\
&=&\langle\nabla\sigma_{r+1},E_{n+1}\rangle,
\end{eqnarray*}
where the third equality we used the relation $P_{r+1}=\sigma_{r+1}I-P_{r}\circ h$ and Lemma \ref{Pr-prop}.
\begin{eqnarray*}
J_{r}(\langle x,\nu\rangle)&=&L_{r}(\langle x,\nu\rangle)+\langle x,\nu\rangle tr(P_{r}h^{2})\\
&=&(P_{r}^{ij}h_{jk}\langle x,e_{k}\rangle)_{,i}+\langle x,\nu\rangle P_{r}^{ij}h^{2}_{ij}\\
&=&[(\sigma_{r+1}\delta_{ik}-P_{r+1}^{ik})\langle x,e_{k}\rangle]_{,i}+\langle x,\nu\rangle(\sigma_{r+1}\delta_{ik}-P_{r+1}^{ik})h_{ik}\\
&=&\sigma_{r+1,i}\langle x,e_{i}\rangle+\sigma_{r+1}[(\langle x,e_{i}\rangle)_{,i}+\langle x,\nu\rangle tr(h)]-[P_{r+1}^{ik}(\langle x,e_{k}\rangle)_{,i}+\langle x,\nu\rangle tr(P_{r+1}h)].
\end{eqnarray*}
Since
\begin{eqnarray*}
P_{r+1}^{ik}(\langle x,e_{k}\rangle)_{i}+\langle x,\nu\rangle tr(P_{r+1}h)
&=&P_{r+1}^{ik}(\delta_{ik}-h_{ik}\langle x,\nu\rangle)+\langle x,\nu\rangle tr(P_{r+1}h)\\
&=&tr(P_{r+1})=(n-r-1)\sigma_{r+1}.
\end{eqnarray*}
Therefore, we see that
\begin{equation*}\label{dsdsds}
J_{r}(\langle x,\nu\rangle)=\langle x,\nabla\sigma_{r+1}\rangle+n\sigma_{r+1}-(n-r-1)\sigma_{r+1}=\langle x,\nabla\sigma_{r+1}\rangle+(r+1)\sigma_{r+1}.
\end{equation*}
\end{proof}
Next we check the boundary equations of corresponding geometric quantities.
\begin{prop}[\cite{GX4}]\label{prop-boundary-half} Let $x: M\to  \overline{\rr^{n+1}_{+}}$ be an isometric immersion. Assume $M$ intersects $\partial\mathbb{R}^{n+1}_{+}$ at a constant contact angle $\th \in (0, \pi)$. Then along $\p M$, we have
\begin{eqnarray}\label{Hyp-dddd1}
&&\nabla_{\mu}\langle x,\nu\rangle=q\langle x,\nu\rangle,\label{boundary-222}\\
&&\nabla_{\mu}(1-\cos\theta\langle E_{n+1},\nu\rangle)=q(1-\cos\theta\langle E_{n+1},\nu\rangle),\label{boundary-111}
\end{eqnarray}
where $q=\cot\theta h(\mu,\mu)$.
\end{prop}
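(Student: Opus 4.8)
The plan is to compute the conormal derivatives $\nabla_\mu\langle x,\nu\rangle$ and $\nabla_\mu\langle E_{n+1},\nu\rangle$ directly from the Weingarten equation in the flat ambient space, and then use two structural facts along $\partial M$: that $\mu$ is a principal direction (Proposition \ref{Principle}), and that the orthonormal frames $\{\mu,\nu\}$ and $\{\bar N,\bar\nu\}$ span the same normal $2$-plane to $\partial M$ in $\overline{\rr^{n+1}_+}$ and are related by the rotation \eqref{mu0}--\eqref{nubar}.

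First I would record the two ambient facts: the position field satisfies $\bar\nabla_e x=e$ and $E_{n+1}$ is parallel, $\bar\nabla_e E_{n+1}=0$. Writing the shape operator as $\bar\nabla_e\nu=h(e,\cdot)^\sharp$ (the sign convention for which spherical caps with outward normal have positive principal curvatures, consistent with the Minkowski formula \eqref{Mink1-half}), one obtains for any tangent vector $e$
\[
\nabla_e\langle x,\nu\rangle=\langle x,\bar\nabla_e\nu\rangle=h(e,x^T),\qquad \nabla_e\langle E_{n+1},\nu\rangle=h(e,E_{n+1}^T),
\]
where $x^T$ and $E_{n+1}^T$ are the tangential projections onto $TM$ (the term $\langle\bar\nabla_e x,\nu\rangle=\langle e,\nu\rangle$ vanishes). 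Setting $e=\mu$ and splitting $x^T$ and $E_{n+1}^T$ into their $\mu$-component and their component in $T(\partial M)$, Proposition \ref{Principle} annihilates the latter, leaving
\[
\nabla_\mu\langle x,\nu\rangle=\langle x,\mu\rangle\,h(\mu,\mu),\qquad \nabla_\mu\langle E_{n+1},\nu\rangle=\langle E_{n+1},\mu\rangle\,h(\mu,\mu).
\]

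Next I would evaluate the coefficients along $\partial M$. Since $x(\partial M)\subset\partial\rr^{n+1}_+=\{x_{n+1}=0\}$ and $\bar N=-E_{n+1}$, the crucial boundary fact is $\langle x,\bar N\rangle=0$; hence the normal-to-$\partial M$ part of $x$ is a multiple of $\bar\nu$, and \eqref{nubar} gives $\langle x,\mu\rangle=\cot\theta\,\langle x,\nu\rangle$. This turns the first identity into $\nabla_\mu\langle x,\nu\rangle=\cot\theta\,h(\mu,\mu)\langle x,\nu\rangle=q\langle x,\nu\rangle$, which is \eqref{boundary-222}. For the second, \eqref{mu0} and \eqref{nu0} with $\bar N=-E_{n+1}$ give $\langle E_{n+1},\mu\rangle=-\sin\theta$ and $\langle E_{n+1},\nu\rangle=\cos\theta$ along $\partial M$, so $1-\cos\theta\langle E_{n+1},\nu\rangle=\sin^2\theta$ there; combined with $\nabla_\mu\langle E_{n+1},\nu\rangle=-\sin\theta\,h(\mu,\mu)$ this yields $\nabla_\mu(1-\cos\theta\langle E_{n+1},\nu\rangle)=\cos\theta\sin\theta\,h(\mu,\mu)=q\sin^2\theta=q(1-\cos\theta\langle E_{n+1},\nu\rangle)$, which is \eqref{boundary-111}.

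I expect the delicate points to be bookkeeping rather than conceptual: fixing the sign convention for $h$ so that the constant comes out as $q=\cot\theta\,h(\mu,\mu)$ with the stated sign, and correctly extracting the coefficients from \eqref{mu0}--\eqref{nubar} together with the orientation $\bar N=-E_{n+1}$. The one genuinely geometric input is the boundary identity $\langle x,\bar N\rangle=0$ (equivalently, $x$ is tangent to $\partial B$ along $\partial M$), since this is exactly what converts $\langle x,\mu\rangle$ into a multiple of $\langle x,\nu\rangle$ and makes the proportionality constant equal to $q$.
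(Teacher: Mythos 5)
Your proof is correct, and the delicate points are all handled properly: you compute $\nabla_\mu\langle x,\nu\rangle$ and $\nabla_\mu\langle E_{n+1},\nu\rangle$ via the Weingarten relation on all of $M$ and only then evaluate along $\partial M$ (never differentiating the boundary-only identities such as $\langle E_{n+1},\nu\rangle=\cos\theta$ in the $\mu$-direction, which would be illegitimate since $\mu$ points out of $\partial M$); Proposition \ref{Principle} correctly annihilates the $T(\partial M)$-components of $x^T$ and $E_{n+1}^T$; and the one genuinely geometric input, $\langle x,\bar N\rangle=-x_{n+1}=0$ along $\partial M$ (valid because $\partial\mathbb{R}^{n+1}_+$ passes through the origin), together with \eqref{nubar} gives $\langle x,\mu\rangle=\cot\theta\,\langle x,\nu\rangle$, while \eqref{mu0}--\eqref{nu0} with $\bar N=-E_{n+1}$ give $\langle E_{n+1},\mu\rangle=-\sin\theta$ and $1-\cos\theta\langle E_{n+1},\nu\rangle=\sin^2\theta$, so both identities close with the stated $q=\cot\theta\,h(\mu,\mu)$ (the $\kappa\csc\theta$ term in \eqref{qsdsdvb} vanishing since the hyperplane is totally geodesic). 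The route differs from the paper only in presentation: the paper offers no computation at all, proving the proposition by specializing the anisotropic boundary identities of \cite[Proposition 2.2]{GX4} to $F\equiv 1$, whereas you derive the isotropic statement directly from the flat ambient structure ($\bar\nabla_e x=e$, $E_{n+1}$ parallel) and the frame rotation \eqref{mu0}--\eqref{nubar}. What your version buys is self-containedness and transparency about exactly which boundary fact drives each identity; what the paper's citation buys is brevity and consistency with its anisotropic companion paper. Your sign conventions also match the paper's ($\bar\nabla_{e_i}\nu=h_{ik}e_k$, as recorded in the Appendix), so no bookkeeping discrepancy arises.
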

\begin{proof}
By choosing $F\equiv1$ in \cite[Proposition 2.2]{GX4}, we get this proposition.
\end{proof}

For the ease of notation, we denote
\begin{equation}\label{uuuuu1-half}
\omega:=1-\cos\theta\langle E_{n+1},\nu\rangle.
\end{equation}
Since $\th \in (0, \pi)$, thus $\omega>0$ on $M$. Motivated by the higher-order Minkowski-type formula \eqref{Mink1-half},
we let
\begin{equation}\label{test-function-half}
\varphi:=\alpha \omega-H_{r+1}\langle x,\nu\rangle,
\end{equation}
where $$\alpha=\left(\int_{M}\omega dA\right)^{-1}\int_{M}\omega H_{r}\,dA.$$
\begin{prop}\label{prop-3.1}Let $x: M\to\overline{\mathbb{R}^{n+1}_{+}}$ be a compact immersed $(r+1)$-th capillary hypersurface with a contact angle $\theta\in(0,\pi)$. 
Then there holds:
	\begin{eqnarray}
J_{r}\varphi&=&\binom{n}{r+1}\left[\alpha(nH_{1}H_{r+1}-(n-r-1)H_{r+2})-(r+1)H_{r+1}^{2}\right],\label{varphi1-half}\\
\nabla_{\mu}\varphi&=&q\varphi,\label{Hyp-bdy1-half}\\
\int_{M}\varphi\,dA&=&0.\label{bdy-zero1-half}
\end{eqnarray}
\end{prop}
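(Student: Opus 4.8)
The plan is to verify the three identities one at a time, the common thread being that on an $(r+1)$-th capillary hypersurface $\sigma_{r+1}=\binom{n}{r+1}H_{r+1}$ is constant, so that $\nabla\sigma_{r+1}=0$, and that $\alpha$ and $H_{r+1}$ are constants which pass freely through every linear operator.

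For the interior identity \eqref{varphi1-half}, I would apply the Jacobi operator $J_r$ termwise to $\varphi=\alpha\omega-H_{r+1}\langle x,\nu\rangle$. Linearity gives $J_r\varphi=\alpha J_r\omega-H_{r+1}J_r(\langle x,\nu\rangle)$, and splitting $\omega=1-\cos\theta\langle E_{n+1},\nu\rangle$ gives $J_r\omega=J_r(1)-\cos\theta\,J_r(\langle E_{n+1},\nu\rangle)$. On the constant function $L_r(1)=0$, so $J_r(1)=\mathrm{tr}(P_rh^2)=\sigma_1\sigma_{r+1}-(r+2)\sigma_{r+2}$ by Lemma \ref{Pr-prop}(4). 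The term $J_r(\langle E_{n+1},\nu\rangle)=\langle E_{n+1},\nabla\sigma_{r+1}\rangle$ from \eqref{Ennu} vanishes since $\nabla\sigma_{r+1}=0$, and \eqref{Xmu1} collapses to $J_r(\langle x,\nu\rangle)=(r+1)\sigma_{r+1}$ for the same reason. Collecting terms, $J_r\varphi=\alpha(\sigma_1\sigma_{r+1}-(r+2)\sigma_{r+2})-(r+1)H_{r+1}\sigma_{r+1}$; the asserted form then follows after substituting $\sigma_k=\binom{n}{k}H_k$ and using the elementary identity $(r+2)\binom{n}{r+2}=(n-r-1)\binom{n}{r+1}$.

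For the boundary identity \eqref{Hyp-bdy1-half}, I would differentiate along $\mu$ and invoke Proposition \ref{prop-boundary-half}. Since $\partial\mathbb{R}^{n+1}_{+}$ is totally geodesic we have $\kappa=0$, so the boundary coefficient $q=\cot\theta\,h(\mu,\mu)$ appearing there agrees with \eqref{qsdsdvb}. Equations \eqref{boundary-222} and \eqref{boundary-111} give $\nabla_\mu\langle x,\nu\rangle=q\langle x,\nu\rangle$ and $\nabla_\mu\omega=q\omega$, and linearity with constant coefficients yields $\nabla_\mu\varphi=q\varphi$ at once. For the mean-value identity \eqref{bdy-zero1-half}, I would use the Minkowski formula \eqref{Mink1-half}: since $H_{r+1}$ is constant it reads $\int_M\omega H_r\,dA=H_{r+1}\int_M\langle x,\nu\rangle\,dA$, while the definition of $\alpha$ gives $\alpha\int_M\omega\,dA=\int_M\omega H_r\,dA$; subtracting the two shows $\int_M\varphi\,dA=\alpha\int_M\omega\,dA-H_{r+1}\int_M\langle x,\nu\rangle\,dA=0$.

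I do not anticipate any serious obstacle. The only step with genuine content is the interior identity, where the care lies entirely in the binomial bookkeeping when converting $\sigma_k$ to $H_k$; everything else is an immediate consequence of $\nabla\sigma_{r+1}=0$ together with the already-established Propositions \ref{prop4.66-half} and \ref{prop-boundary-half} and the Minkowski formula \eqref{Mink1-half}.
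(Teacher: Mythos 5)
Your proposal is correct and follows the paper's own route exactly: the interior identity from Proposition \ref{prop4.66-half} with $\nabla\sigma_{r+1}=0$, the boundary identity from Proposition \ref{prop-boundary-half} (noting $\kappa=0$ so $q=\cot\theta\,h(\mu,\mu)$), and the mean-zero identity from the Minkowski formula \eqref{Mink1-half} together with the definition of $\alpha$. The only content you add is the explicit binomial bookkeeping, including the identity $(r+2)\binom{n}{r+2}=(n-r-1)\binom{n}{r+1}$, which is verified correctly.
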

\begin{proof}
	\eqref{varphi1-half} follows from Proposition \ref{prop4.66-half} and \eqref{Hyp-bdy1-half} from Proposition \ref{prop-boundary-half}.
	\eqref{bdy-zero1-half} is from \eqref{test-function-half} and \eqref{Mink1-half}.
\end{proof}
Now we prove a rigidity theorem for stable $(r+1)$-th capillary hypersurface in a half-space.
\begin{theorem}\label{thmm4.1}
	Let $M$ be a compact immersed $(r+1)$-th capillary hypersurface in $\overline{\mathbb{R}^{n+1}_{+}}$ with a constant contact angle $\theta\in(0,\pi)$. If $M$ is stable, then it is a spherical cap.
\end{theorem}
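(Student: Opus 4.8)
The plan is to feed the admissible test function $\varphi=\alpha\omega-H_{r+1}\langle x,\nu\rangle$ from \eqref{test-function-half}, which lies in $\mathcal F$ by Proposition \ref{prop-3.1}, into the stability inequality \eqref{stable1-half}. Since $\overline{\mathbb R^{n+1}_+}$ is flat we have $K=0$, so stability reads $-\int_M\varphi J_r\varphi\,dA\ge0$, where $J_r=L_r+\mathrm{tr}(P_rh^2)$ as in \eqref{Irf11}. The strategy is to show that $\int_M\varphi J_r\varphi\,dA$ is in fact nonnegative, so that both inequalities together force it to vanish; the equality case of the underlying Newton--MacLaurin estimate will then pin down total umbilicity, and an umbilic hypersurface meeting $\partial\mathbb R^{n+1}_+$ at a constant angle is a spherical cap.

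Two structural facts make the quadratic form collapse. First, Proposition \ref{prop4.66-half} together with $\sigma_{r+1}=\text{const}$ gives $J_r\langle x,\nu\rangle=(r+1)\sigma_{r+1}$, a constant, and $J_r\omega=\mathrm{tr}(P_rh^2)=\sigma_1\sigma_{r+1}-(r+2)\sigma_{r+2}$ by Lemma \ref{Pr-prop}. Second, $J_r$ is self-adjoint on functions obeying the Robin condition $\nabla_\mu f=qf$: the boundary term in $\int_M(uJ_rv-vJ_ru)$ equals $\int_{\partial M}\big(uP_r(\nabla v,\mu)-vP_r(\nabla u,\mu)\big)\,ds$, and since Proposition \ref{Principle} kills the contribution of the tangential part of the gradient, $P_r(\nabla v,\mu)=(\nabla_\mu v)P_r(\mu,\mu)=qvP_r(\mu,\mu)$, so the integrand cancels. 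Both $\omega$ and $\langle x,\nu\rangle$ satisfy this Robin condition by Proposition \ref{prop-boundary-half}, and so does $\varphi$.

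Combining these facts with $\int_M\varphi\,dA=0$, the term $\int_M\langle x,\nu\rangle J_r\varphi\,dA=\int_M\varphi J_r\langle x,\nu\rangle\,dA$ vanishes, while $\int_M\omega J_r\langle x,\nu\rangle\,dA=(r+1)\binom{n}{r+1}H_{r+1}\int_M\omega\,dA$; substituting $\varphi=\alpha\omega-H_{r+1}\langle x,\nu\rangle$ and expanding leaves
\[
\int_M\varphi J_r\varphi\,dA=\alpha^2\int_M\omega J_r\omega\,dA-(r+1)\binom{n}{r+1}\alpha H_{r+1}^2\int_M\omega\,dA.
\]
To bound the first integral from below I would use the two Newton--MacLaurin inequalities of Lemma \ref{Newton-Mac}, $H_1H_r\ge H_{r+1}$ and $H_rH_{r+2}\le H_{r+1}^2$, which, after multiplying by $H_r>0$, combine into the pointwise estimate $nH_1H_{r+1}-(n-r-1)H_{r+2}\ge(r+1)H_{r+1}^2/H_r$, with equality exactly at umbilic points (for $r=n-1$ the last term drops and only the first inequality is needed). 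Integrating against the positive weight $\omega\,dA$ and then applying Cauchy--Schwarz, $\big(\int_M\omega H_r\,dA\big)\big(\int_M\omega/H_r\,dA\big)\ge\big(\int_M\omega\,dA\big)^2$, together with the defining relation $\alpha=\int_M\omega H_r\,dA\big/\int_M\omega\,dA$, yields $\int_M\varphi J_r\varphi\,dA\ge0$. With stability this gives equality, and since $\omega>0$ the Newton--MacLaurin equality holds everywhere, so $M$ is totally umbilic and hence a spherical cap.

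The step I expect to be the main obstacle is justifying the positivity that all of the above relies on: the Newton--MacLaurin inequalities and the factor $1/H_r$ require $\kappa\in\Gamma_{r+1}^+$, equivalently $H_0,\dots,H_{r+1}>0$. By Proposition \ref{prop-elliptic} it suffices to produce one elliptic point and to know $H_{r+1}>0$. For a compact hypersurface with $\partial M\subset\{x_{n+1}=0\}$ I would take a center $c=-sE_{n+1}$ far below the bounding hyperplane; for $s$ large the maximum of $|x-c|^2$ on $M$ is attained at an interior point, because every boundary point has $x_{n+1}=0$ while interior points pick up the dominant term $2s\,x_{n+1}$. At that farthest point $M$ lies inside a sphere centred at $c$, forcing all principal curvatures to be positive: an elliptic point. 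This yields $H_{r+1}>0$ (it is constant and positive at that point, after fixing the orientation accordingly), whereupon Proposition \ref{prop-elliptic} supplies the remaining positivity and the ellipticity of $J_r$ needed above.
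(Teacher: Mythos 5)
Your proposal is correct and follows the same overall strategy as the paper's proof of Theorem \ref{thmm4.1} --- the same test function $\varphi=\alpha\omega-H_{r+1}\langle x,\nu\rangle$ from \eqref{test-function-half} inserted into the stability inequality \eqref{stable1-half}, with Newton--MacLaurin inequalities plus a weighted Cauchy--Schwarz forcing equality and hence umbilicity --- but the middle of the argument is organized differently, in ways worth recording. Where the paper substitutes the explicit expression \eqref{varphi1-half} for $J_r\varphi$ and then evaluates $\int_M H_{r+1}\langle x,\nu\rangle\left(nH_1H_{r+1}-(n-r-1)H_{r+2}\right)dA$ by applying the Minkowski formula \eqref{Mink1-half} twice (at levels $0$ and $r+1$), you instead symmetrize using self-adjointness of $J_r$ on functions satisfying the Robin condition $\nabla_\mu f=qf$; your justification via Proposition \ref{Principle} (the tangential part of the gradient contributes nothing to $P_r(\nabla f,\mu)$) is exactly the Green's-identity mechanism the paper itself uses in the horosphere case, cf.\ \eqref{xeq11120}, so the Minkowski formula enters your argument only once, through $\int_M\varphi\,dA=0$. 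Likewise, your closing estimate applies the pointwise bound $nH_1H_{r+1}-(n-r-1)H_{r+2}\ge (r+1)H_{r+1}^2/H_r$ and Cauchy--Schwarz to the pair $\left(H_r,1/H_r\right)$, whereas the paper drops the $(n-r-1)(H_1H_{r+1}-H_{r+2})$ term and pairs $\left(H_1,1/H_1\right)$ together with $1/H_1\le H_r/H_{r+1}$; these are algebraically equivalent rearrangements of the same two Newton--MacLaurin inequalities, with the same equality case. You additionally prove the existence of an elliptic point (maximizing $|x-c|^2$ with $c=-sE_{n+1}$, $s$ large, so the maximum is interior and a touching sphere forces all $\kappa_i>0$), a fact the paper merely asserts; that is a genuine gain in completeness. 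One small point, which applies to the paper's proof equally: as stated, Lemma \ref{Newton-Mac} yields $H_{r+2}\le H_1H_{r+1}$ only for $\kappa\in\Gamma_{r+2}^{+}$, while Proposition \ref{prop-elliptic} guarantees only $\kappa\in\Gamma_{r+1}^{+}$; the inequality still holds there (it is trivial when $H_{r+2}\le 0$, and when $H_{r+2}>0$ one has $\kappa\in\Gamma_{r+2}^{+}$), and in the equality analysis the identity $H_1H_r=H_{r+1}$ with all quantities positive still pins down umbilicity --- so your argument survives, but you should add a sentence making this sign discussion explicit.
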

\begin{proof}
The proof is close to \cite{HL} by the second author with Y. He for closed hypersurface case. We prove it here for reader's convenience. Since $M$ is compact hypersurface in $\overline{\mathbb{R}^{n+1}_{+}}$ with contact angle $\theta\in(0,\pi)$, there exists a point in $M$ where all the principal curvatures are positive. Thus $H_{r+1}$ is positive constant. From Proposition \ref{prop-elliptic}, we know that each operator $L_{i}$ is elliptic and $H_{i}>0$ for any $i\in\{1,\cdots,r\}$. Hence $\alpha$ is positive constant.

From \eqref{Hyp-bdy1-half} and \eqref{bdy-zero1-half}, we obtain that $\varphi$ is an admissible test function in \eqref{stable1-half}.
Therefore, by \eqref{varphi1-half}, we have
\begin{eqnarray}\label{xeq1111-half}
0 &\le &-\int_M\varphi J_{r}\varphi\, dA\\
&=&-\binom{n}{r+1}\int_M \varphi[\alpha(nH_{1}H_{r+1}-(n-r-1)H_{r+2})-(r+1)H_{r+1}^{2}] dA\nonumber\\
&=&-\alpha \binom{n}{r+1}\int_M\varphi(nH_{1}H_{r+1}-(n-r-1)H_{r+2})dA,\nonumber
\end{eqnarray}
where the last equality we used $H_{r+1}$ is constant and \eqref{bdy-zero1-half}. From \eqref{test-function-half} we see that
\begin{eqnarray}\label{xeq11115555-half}
&&\varphi(nH_{1}H_{r+1}-(n-r-1)H_{r+2})\\
&=&\alpha \omega(nH_{1}H_{r+1}-(n-r-1)H_{r+2})\nonumber\\
&&+(H_{r+1}\langle x,\nu\rangle)(nH_{1}H_{r+1}-(n-r-1)H_{r+2}) \nonumber\\
&=&\alpha(n-r-1)\omega(H_{1}H_{r+1}-H_{r+2})+\alpha (r+1)\omega H_{1}H_{r+1}\nonumber\\
&&+(H_{r+1}\langle x,\nu\rangle)(nH_{1}H_{r+1}-(n-r-1)H_{r+2}). \nonumber
\end{eqnarray}
Putting \eqref{xeq11115555-half} into \eqref{xeq1111-half}, we get
\begin{eqnarray}\label{xeytf}
\qquad0 &\geq&\int_M\varphi(nH_{1}H_{r+1}-(n-r-1)H_{r+2})dA\\
&=&\alpha(n-r-1)\int_M \omega(H_{1}H_{r+1}-H_{r+2})dA+\alpha(r+1)\int_M \omega H_{1}H_{r+1}dA \nonumber\\
&&+\int_M(H_{r+1}\langle x,\nu\rangle)(nH_{1}H_{r+1}-(n-r-1)H_{r+2})dA. \nonumber
\end{eqnarray}
From Lemma \ref{Newton-Mac}, we have
\begin{eqnarray}\label{xeqqqq1}
\qquad0 &\geq&\alpha(r+1)\int_M \omega H_{1}H_{r+1}dA+\int_M(H_{r+1}\langle x,\nu\rangle)(nH_{1}H_{r+1}-(n-r-1)H_{r+2})dA \\
&=&\alpha(r+1)\int_M \omega H_{1}H_{r+1}dA+(r+1)H^{2}_{r+1}\int_M \omega dA\nonumber\\
&=&\alpha(r+1)H_{r+1}^{2}\left[\frac{1}{H_{r+1}}\int_M \omega H_{1}dA-\frac{1}{\alpha}\int_M \omega dA\right].\nonumber
\end{eqnarray}
Here the second equality we used the higher-order Minkowski-type formula \eqref{Mink1-half} twice.

By H\"{o}lder inequality and Newton-MacLaurin inequality \eqref{Newtonaa}, we obtain
\begin{eqnarray}\label{xeqqqq1qqa}
\left(\int_{M}\omega dA\right)^{2}\leq\int_{M}\frac{1}{H_{1}}\omega dA\int_{M}\omega{H_{1}}dA\leq\int_{M}\frac{H_{r}}{H_{r+1}}\omega dA\int_{M}\omega{H_{1}}dA.
\end{eqnarray}
Hence, from \eqref{xeqqqq1qqa} and the definition of $\alpha$ we have
\begin{eqnarray}\label{xewewe}
&&\frac{1}{H_{r+1}}\int_M \omega H_{1}dA-\frac{1}{\alpha}\int_M \omega dA\\
&&=\frac{1}{H_{r+1}}\left[\int_M \omega H_{1}dA-\left(\int_{M}\frac{H_{r}}{H_{r+1}}\omega dA\right)^{-1}\left(\int_{M}\omega dA\right)^{2}\right]\geq0.\nonumber
\end{eqnarray}
Combining \eqref{xeqqqq1} and \eqref{xewewe}, we see the above inequality is in fact an equality. It follows that $H_{r}=H_{r+1}H_{1}$ on $M$ and in turn $M$ is totally umbilical in $\overline{\mathbb{R}^{n+1}_{+}}$, i.e. $M$ is a spherical cap.

\end{proof}

\

\section{$(r+1)$-th capillary hypersurfaces supported on a horosphere}\label{sec5}
In this section we focus on the stability of $(r+1)$-capillary hypersurfaces with boundary supported on a horosphere in hyperbolic space.

Let $(\mathbb{H}^{n+1},\bar{g})$ be a complete simply-connected Riemannian manifold with constant sectional curvature $-1$. We use the upper half-space model for $\hh^{n+1}$, which is denoted by
\begin{eqnarray}\label{half-space}
\hh^{n+1}=\{x=(x_{1}, x_{2},\cdots,x_{n+1})\in \rr^{n+1}_+: x_{n+1}>0\},\quad \bar g=\frac{1}{x_{n+1}^2}\delta,
\end{eqnarray}
where $\delta$ is a Euclidean metric in $\rr^{n+1}$.

A horosphere, a ``sphere" in $\hh^{n+1}$ whose centre lies at $\partial_{\infty}\mathbb{H}^{n+1}$, up to a hyperbolic isometry,  is written by the horizontal plane
\begin{equation}
\mathcal{H}=\{x\in\mathbb{R}^{n+1}_{+}:x_{n+1}=1\}.
\end{equation}
We choose $\bar N=-E_{n+1}=(0,\cdots, 0, -1)$, then all principal curvatures of a horosphere are $\kappa=1$. By Gauss equation, namely $R_{ijij}=-1+\kappa_{i}\kappa_{j}=0$ for any $i\neq j$, we know that a horosphere is isometric to the $n$-dimensional Euclidean space $\rr^{n}$.

Let $x: (M^n, g)\to (\mathbb{H}^{n+1}, \bar g)$ be an isometric immersion of an orientable $n$-dimensional compact manifold $M$ with boundary $\p M$ satisfying $x|_{\p M}: \p M \to \mathcal{H}$. Such an immersion is called an immersion supported on a horosphere $\mathcal{H}$. (See Figure 2 below).

\begin{figure}[H]
	\centering
	\includegraphics[height=6cm,width=15cm]{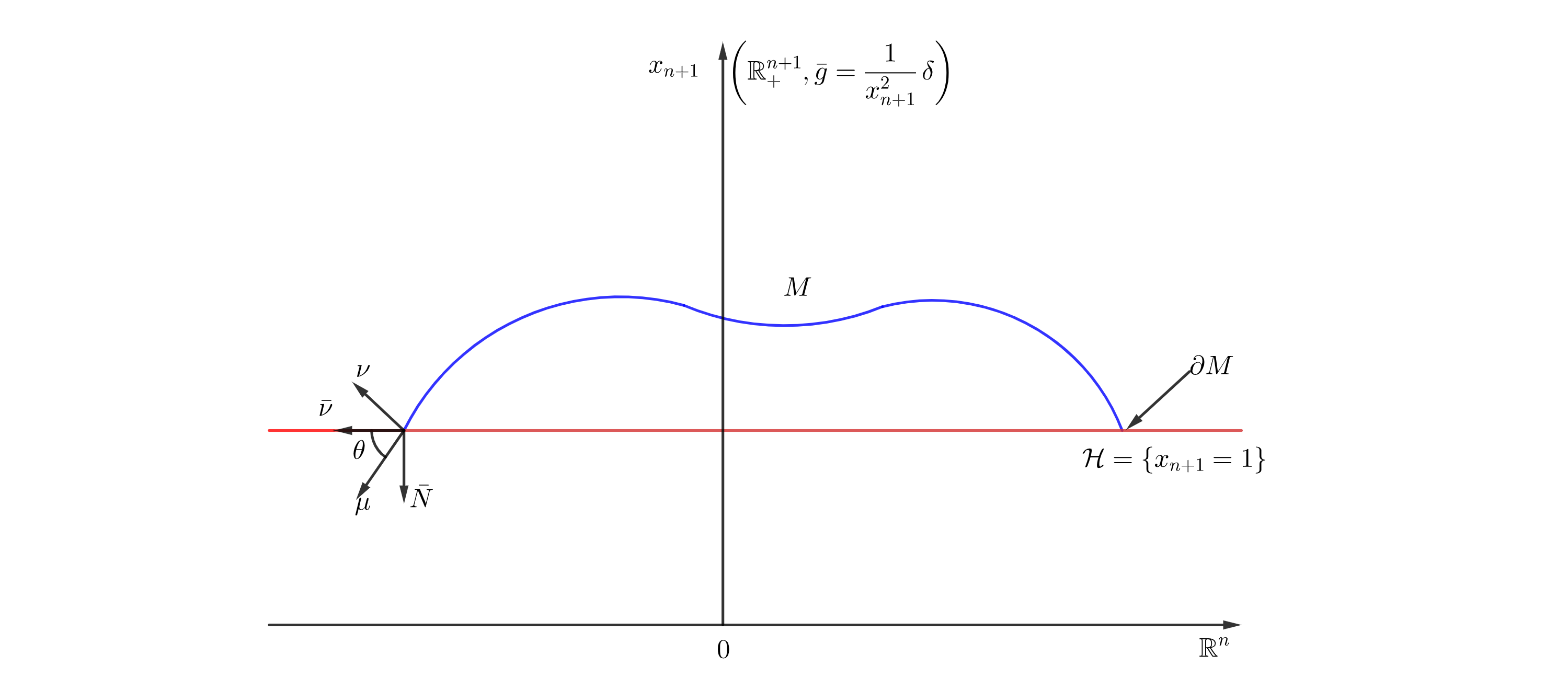}
	\caption{Hypersurface $M$  supported on horosphere $\mathcal{H}$.}
\end{figure}

We denote $x$ to be the position vector in $\hh^{n+1}$ and $\bar{\nabla}$ be the Levi-Civita connection of $\hh^{n+1}$.
We use $\<\cdot ,\cdot\>$ and $\bar g$ to denote the inner product of $\rr^{n+1}$ and $\hh^{n+1}$ respectively,  $D$ and $\bar \nabla$ to denote the Levi-Civita connection of $\rr^{n+1}$ and $\hh^{n+1}$ respectively. Let $\{E_A\}_{A=1}^{n+1}$ be the canonical basis of $\rr^{n+1}$ and $\bar E_A = x_{n+1} E_A$. Then $\{ \bar E_A\}_{A=1}^{n+1}$  is an orthonormal basis of $\hh^{n+1}$ with respect to $\bar{g}$.

The relationship of $\bar{\nabla}$ and $D$ is given by
\begin{equation}\label{YZ}
  \bar{\nabla}_{Y}Z=D_{Y}Z-Y(\ln x_{n+1})Z-Z(\ln x_{n+1})Y+\langle Y,Z\rangle D(\ln x_{n+1}).
\end{equation}
It is easy to check that
\begin{eqnarray}
\bar \n_Y x &=& -\bar g (Y, \bar E_{n+1})x + \bar g (Y, x ) \bar E_{n+1},\label{a1} \\  \label{a2}
\bar \n_Y  E_i &=& -\bar g (Y, \bar E_{n+1}) E_i + \bar g (Y, \bar E_i) E_{n+1}, \quad \forall i=1,2, \cdots, n, \\ \label{a3}
\bar \n_Y E_{n+1}  &=& - \frac 1{x_{n+1}} Y,\label{dirEn}
\end{eqnarray}
for any vector field $Y$ in $\hh^{n+1}$.

The following propositions play a crucial role in this section.
\begin{prop}[{\rm \cite[Proposition 2.2]{GWX3}}]\label{lem2.1} \

$(\rm i)$  The vector fields  \,$x$ and $\{E_i\}_{i=1}^{n}$ are Killing vector fields in $\hh^{n+1}$, i.e,
\begin{eqnarray}\label{xKilling}
\frac12\big( \bar g (\bar \n_A x, E_B) + \bar g (\bar \n_B x, E_A)
 ) =\frac12\big( \bar g( \bar \n_A E_{i}, E_B)+
 \bar g( \bar \n_B E_{i}, E_A)
 \big)=0.
\end{eqnarray}

$(\rm {ii})$\,$E_{n+1}$ is a conformal Killing vector field in $\hh^{n+1}$, i.e,
\begin{eqnarray}\label{En-Killing}
\frac12\big (\bar g( \bar \n_A E_{n+1}, E_B)+
\bar g(\bar \n_B E_{n+1}, E_A)
\big)= -\frac{1}{x_{n+1}}\bar{g}_{AB}.
\end{eqnarray}
Here $\bar \n_A = \bar \n _{E_A}$ and $\bar g_{AB}= \bar g(E_A, E_B)$.
\end{prop}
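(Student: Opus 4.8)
The plan is to verify all three identities by direct substitution of the covariant-derivative formulas \eqref{a1}--\eqref{a3}, after which each claim reduces to an elementary algebraic fact about the symmetric bilinear form $\bar g$. The only structural input I need beyond these three formulas is the relation $\bar E_A = x_{n+1}E_A$ between the two frames.

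I would first dispose of the conformal Killing identity \eqref{En-Killing}, which is immediate. Taking $Y=E_A$ in \eqref{a3} gives $\bar\n_A E_{n+1}=-\tfrac{1}{x_{n+1}}E_A$, so that $\bar g(\bar\n_A E_{n+1},E_B)=-\tfrac{1}{x_{n+1}}\bar g_{AB}$. This quantity is already symmetric in $A$ and $B$, hence equal to its own symmetrization, which is precisely \eqref{En-Killing}.

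For the Killing identities \eqref{xKilling} I would treat $x$ and $E_i$ simultaneously. Using $\bar E_{n+1}=x_{n+1}E_{n+1}$ and $\bar E_i=x_{n+1}E_i$, I would rewrite \eqref{a1} and \eqref{a2} in the common normal form
\[
\bar\n_Y x=x_{n+1}\big(\bar g(Y,x)E_{n+1}-\bar g(Y,E_{n+1})x\big),\qquad \bar\n_Y E_i=x_{n+1}\big(\bar g(Y,E_i)E_{n+1}-\bar g(Y,E_{n+1})E_i\big).
\]
The key observation is that any endomorphism of the form $Y\mapsto \bar g(Y,a)b-\bar g(Y,b)a$ is skew-symmetric with respect to $\bar g$; indeed
\[
\bar g\big(\bar g(Y,a)b-\bar g(Y,b)a,\,Z\big)=\bar g(Y,a)\bar g(b,Z)-\bar g(Y,b)\bar g(a,Z)
\]
is manifestly antisymmetric under $Y\leftrightarrow Z$ by the symmetry of $\bar g$. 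Multiplying by the scalar $x_{n+1}$ preserves this skew-symmetry, so evaluating at $Y=E_A$, $Z=E_B$ shows that both $\bar g(\bar\n_A x,E_B)$ and $\bar g(\bar\n_A E_i,E_B)$ change sign under $A\leftrightarrow B$; their symmetrizations therefore vanish, which is exactly \eqref{xKilling}.

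I do not anticipate any genuine obstacle, since the statement is a routine verification. The only points demanding care are the passage from the frame $\{\bar E_A\}$ to $\{E_A\}$ through the conformal factor $x_{n+1}$, and recognizing the skew-symmetric normal form $\bar g(Y,a)b-\bar g(Y,b)a$ in \eqref{a1} and \eqref{a2}; the latter is what makes the antisymmetry transparent and avoids a term-by-term index computation.
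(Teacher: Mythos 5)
Your proof is correct: the conformal Killing identity \eqref{En-Killing} is immediate from \eqref{a3}, and recasting \eqref{a1}--\eqref{a2} in the form $Y\mapsto x_{n+1}\big(\bar g(Y,a)b-\bar g(Y,b)a\big)$, which is manifestly $\bar g$-skew-symmetric, cleanly yields \eqref{xKilling}. The paper itself quotes this proposition from \cite{GWX3} without reproducing a proof, and your direct verification from \eqref{a1}--\eqref{a3} is essentially the same routine computation that the cited reference carries out, with your skew-symmetric normal form serving as a tidy way to avoid a term-by-term index check.
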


Now we recall a conformal Killing vector field $X_{n+1}$ and a function $V_{n+1}$ in $\hh^{n+1}$ from \cite{GX2} that we will use
later. Denote
  \begin{equation}\label{cfkill2}
     X_{n+1}=x-E_{n+1}, \quad V_{n+1}=\frac{1}{x_{n+1}}.
  \end{equation}
From Proposition \ref{lem2.1}, we have the following important properties
\begin{prop}[{\rm \cite[Proposition 2.1]{GX2}}]\label{xaa}\

$(\rm i)$\,$X_{n+1}$ is a conformal Killing vector field with $\frac{1}{2}\mathcal{L}_{X_{n+1}}\bar{g}=V_{n+1}\bar{g}$, namely
\begin{eqnarray}\label{XXaeq1}
\frac12\big[\bar{g}(\bar{\nabla}_{E_A} X_{n+1},E_B)+\bar{g}(\bar{\nabla}_{E_B} X_{n+1},E_A)\big]=V_{n+1}\bar{g}_{AB}.
\end{eqnarray}

 $ ({\rm ii})$ $X _{n+1}\mid_{\mathcal{H}}$ is a tangential vector field on $\mathcal{H}$, i.e.,
\begin{eqnarray}\label{XXaeq2}
\bar{g}(X_{n+1}, \bar{N})=0 \quad\text{\rm on}\,\,\mathcal{H}.
\end{eqnarray}
\end{prop}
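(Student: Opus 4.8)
The plan is to verify both claims by direct computation from the definitions in \eqref{cfkill2}, feeding in the Killing and conformal-Killing data already recorded in Proposition \ref{lem2.1}; no new derivative identities should be needed. The whole argument is elementary, so I would organize it as two short computations.

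For part $(\rm i)$, I would write $X_{n+1}=x-E_{n+1}$ and use linearity of the connection, $\bar{\nabla}_{E_A}X_{n+1}=\bar{\nabla}_{E_A}x-\bar{\nabla}_{E_A}E_{n+1}$. Taking the symmetric part of $\bar{g}(\bar{\nabla}_{\bullet}X_{n+1},\bullet)$ in the indices $A,B$, the contribution coming from $x$ vanishes identically by the Killing property \eqref{xKilling}, while the contribution from $E_{n+1}$ is supplied by \eqref{En-Killing}. Since $E_{n+1}$ enters $X_{n+1}$ with a minus sign, the two combine to
\begin{equation*}
\frac12\big[\bar{g}(\bar{\nabla}_{E_A} X_{n+1},E_B)+\bar{g}(\bar{\nabla}_{E_B} X_{n+1},E_A)\big]=-\Big(-\frac{1}{x_{n+1}}\bar{g}_{AB}\Big)=V_{n+1}\bar{g}_{AB},
\end{equation*}
which is exactly \eqref{XXaeq1}. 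This identifies $X_{n+1}$ as a conformal Killing field with conformal factor $V_{n+1}=1/x_{n+1}$.

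For part $(\rm ii)$, on the horosphere $\mathcal{H}=\{x_{n+1}=1\}$ we have $\bar{N}=-E_{n+1}$, so
\begin{equation*}
\bar{g}(X_{n+1},\bar{N})=\bar{g}(x-E_{n+1},-E_{n+1})=-\bar{g}(x,E_{n+1})+\bar{g}(E_{n+1},E_{n+1}).
\end{equation*}
Using $\bar{g}=\tfrac{1}{x_{n+1}^2}\delta$ together with the Euclidean evaluations $\langle x,E_{n+1}\rangle=x_{n+1}$ and $\langle E_{n+1},E_{n+1}\rangle=1$, this equals $-\tfrac{1}{x_{n+1}}+\tfrac{1}{x_{n+1}^2}$, which vanishes on $\mathcal{H}$ since $x_{n+1}=1$ there, establishing \eqref{XXaeq2}. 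I do not anticipate any genuine obstacle: the only point requiring care is bookkeeping the conformal factor, i.e. keeping the Euclidean basis $E_A$ distinct from the $\bar{g}$-orthonormal frame $\bar{E}_A=x_{n+1}E_A$ when reading off inner products, and remembering that \eqref{En-Killing} already packages the symmetric part of $\bar{\nabla}E_{n+1}$ so that the symmetrization in part $(\rm i)$ is immediate.
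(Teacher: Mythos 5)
Your proof is correct and follows exactly the route the paper intends: the paper states this proposition with the remark ``From Proposition \ref{lem2.1}, we have the following important properties,'' i.e.\ part $(\rm i)$ is obtained by linearity from the Killing property \eqref{xKilling} of $x$ and the conformal-Killing identity \eqref{En-Killing} for $E_{n+1}$, and part $(\rm ii)$ by the direct evaluation $\bar{g}(x-E_{n+1},-E_{n+1})=-\tfrac{1}{x_{n+1}}+\tfrac{1}{x_{n+1}^{2}}=0$ on $\{x_{n+1}=1\}$, precisely as you computed. No gaps; your care in distinguishing $E_A$ from the $\bar g$-orthonormal frame $\bar{E}_A=x_{n+1}E_A$ is exactly the right bookkeeping.
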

\begin{prop}[{\rm \cite[Proposition 2.2]{GX2}}]\label{xaa2}  $V_{n+1}$ satisfies the following properties:
\
\begin{eqnarray}\label{va2}
  \bar{\n}^2 V_{n+1}&=& V_{n+1}  \bar{g } \quad\quad\text{in}\,\, \hh^{n+1},\\
\partial_{\bar{N}}V_{n+1}&=& V_{n+1} \quad  \quad\,\,\text{on}\,\, \mathcal{H}.
\end{eqnarray}
\end{prop}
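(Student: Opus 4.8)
The plan is to reduce everything to the explicit hyperbolic gradient of $V_{n+1}$ in the upper half-space model, after which both the Hessian identity and the boundary identity follow by a single further differentiation. I will not need any global analysis; the whole proposition is a local computation that hinges on the connection formulas already recorded in \eqref{a1}--\eqref{a3}.

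First I would compute $\bar{\nabla} V_{n+1}$. Since $V_{n+1}=x_{n+1}^{-1}$ and $\bar g=x_{n+1}^{-2}\delta$, I expect the clean identity $\bar{\nabla}V_{n+1}=-E_{n+1}$. I would verify it by testing against the $\bar g$-orthonormal frame $\{\bar E_A\}_{A=1}^{n+1}=\{x_{n+1}E_A\}$: for a horizontal direction $\bar E_A$ with $A\le n$ the function $x_{n+1}^{-1}$ is constant, matching $\bar g(-E_{n+1},\bar E_A)=0$, while for the vertical direction one computes $\bar E_{n+1}(V_{n+1})=-V_{n+1}$, which equals $\bar g(-E_{n+1},\bar E_{n+1})$. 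This pins down $\bar{\nabla}V_{n+1}=-E_{n+1}$.

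Next I would differentiate once more. For vector fields $Y,Z$,
\[
\bar{\nabla}^2 V_{n+1}(Y,Z)=\bar g(\bar{\nabla}_Y\bar{\nabla}V_{n+1},Z)=-\bar g(\bar{\nabla}_Y E_{n+1},Z).
\]
The computation then collapses because of \eqref{a3}, namely $\bar{\nabla}_Y E_{n+1}=-\tfrac{1}{x_{n+1}}Y=-V_{n+1}Y$; substituting gives $\bar{\nabla}^2V_{n+1}(Y,Z)=V_{n+1}\bar g(Y,Z)$, which is the first claimed identity. The only place genuine geometry enters is this appeal to \eqref{a3}, so the step carrying the content is recognizing that the Hessian of $V_{n+1}$ is governed entirely by how $E_{n+1}$ is transported in the hyperbolic connection.

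Finally, for the boundary identity I would use that on the horosphere $\mathcal{H}=\{x_{n+1}=1\}$ the outward unit normal is $\bar N=-E_{n+1}$, which is genuinely $\bar g$-unit there since $|E_{n+1}|_{\bar g}=x_{n+1}^{-1}=1$ on $\mathcal{H}$. Then $\partial_{\bar N}V_{n+1}=\bar g(\bar{\nabla}V_{n+1},\bar N)=\bar g(-E_{n+1},-E_{n+1})=x_{n+1}^{-2}=1=V_{n+1}$ on $\mathcal{H}$. I do not anticipate a real obstacle here: once $\bar{\nabla}V_{n+1}=-E_{n+1}$ is established the proof is a two-line differentiation, and the only subtlety is bookkeeping the conformal factor $x_{n+1}$ so that the normalization $\bar N=-E_{n+1}$ and the evaluation $V_{n+1}=1$ on $\mathcal{H}$ are used consistently.
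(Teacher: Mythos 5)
Your proof is correct, and it is the standard verification: the paper itself does not prove this proposition but imports it from \cite[Proposition 2.2]{GX2}, so there is no internal argument to diverge from. Your key identity $\bar{\nabla}V_{n+1}=-E_{n+1}$ checks out against the frame $\{\bar E_A\}$, the Hessian step is exactly the paper's own formula \eqref{a3} (equivalently \eqref{dirEn}), and the boundary computation correctly uses that $\bar N=-E_{n+1}$ is $\bar g$-unit on $\mathcal{H}$ where $x_{n+1}=1$ and $V_{n+1}=1$. One remark: once you know $-E_{n+1}$ is a gradient field, the Hessian identity \eqref{va2} is also immediate from the conformal Killing equation \eqref{En-Killing} recorded in Proposition \ref{lem2.1}, since for a gradient field the symmetrized covariant derivative $\tfrac12\mathcal{L}_{\bar{\nabla}V_{n+1}}\bar g$ \emph{is} the Hessian; this is the same content as your appeal to \eqref{a3}, just phrased through the Killing-type machinery the paper uses elsewhere.
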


\begin{prop}\label{stable-necess}Assume $0\leq r\leq n-1$, then any totally umbilical hypersurface supported on a horosphere $\mathcal{H}$ is a stable $(r+1)$-th capillary hypersurface.
\end{prop}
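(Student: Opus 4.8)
The plan is to imitate the half-space argument of Proposition \ref{stable-necess-half}, reducing the stability of the $(r+1)$-th problem to the already-known stability of the ordinary ($r=0$) capillary problem on a horosphere. Let $M$ be totally umbilical, so that all principal curvatures equal a single constant $\Lambda$; by the classification of complete totally umbilical hypersurfaces recalled in Section \ref{sec3} together with the orientation convention \eqref{mu0}--\eqref{nu0}, I may assume $\Lambda\geq 0$. The starting point is the second variation formula \eqref{stab-ineq}, which on a horosphere ($K=-1$) reads
\begin{equation*}
\mathcal{E}_{r+1}''(0)=-(n-r)\binom{n}{r+1}^{-1}\int_M \varphi\big[L_r\varphi+\mathrm{tr}(P_rh^2)\varphi-\mathrm{tr}(P_r)\varphi\big]\,dA.
\end{equation*}

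First I would simplify every Newton-tensor quantity using umbilicity. From the inductive definition \eqref{Pr} one checks by induction that $P_r=\binom{n-1}{r}\Lambda^r I$, whence Lemma \ref{Pr-prop} gives $L_r\varphi=\binom{n-1}{r}\Lambda^r\Delta\varphi$, $\mathrm{tr}(P_rh^2)=n\binom{n-1}{r}\Lambda^{r+2}$ and $\mathrm{tr}(P_r)=n\binom{n-1}{r}\Lambda^r$. Substituting and factoring out $\binom{n-1}{r}\Lambda^r$, then using the elementary identity $\binom{n}{r+1}^{-1}\binom{n-1}{r}=\frac{r+1}{n}$ and recognizing the remaining integral as $\mathcal{E}_1''(0)$ for the same hypersurface, I obtain the clean proportionality
\begin{equation*}
\mathcal{E}_{r+1}''(0)=\frac{(n-r)(r+1)}{n}\,\Lambda^r\,\mathcal{E}_1''(0),
\end{equation*}
exactly parallel to \eqref{staqq}.

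The final step is to invoke the $r=0$ case. The point that makes the reduction legitimate is that the admissible class \eqref{stable1-pre} is independent of $r$: the boundary coefficient $q=\kappa\csc\theta+\cot\theta\,h(\mu,\mu)$ from \eqref{qsdsdvb} depends only on $\kappa=1$, $\theta$ and $h(\mu,\mu)=\Lambda$, all unchanged, so $\varphi$ is admissible for the $(r+1)$-th problem if and only if it is admissible for the $r=0$ problem. By Theorem \ref{GWXthm} (equivalently the second-variation computation in \cite{GWX3}), a totally umbilical hypersurface supported on a horosphere is a stable capillary hypersurface, i.e. $\mathcal{E}_1''(0)\geq 0$ for every $\varphi\in\mathcal{F}$. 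Since $\Lambda\geq 0$ and $0\leq r\leq n-1$, the prefactor $\frac{(n-r)(r+1)}{n}\Lambda^r$ is nonnegative, so $\mathcal{E}_{r+1}''(0)\geq 0$, which is the desired stability.

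I expect the only genuinely delicate point to be the sign of the prefactor, i.e. guaranteeing $\Lambda\geq 0$: the proportionality above shows that for odd $r$ a choice of orientation with $\Lambda<0$ would reverse the inequality, so one must fix the orientation (as in the $\Lambda>0$ normalization used for spherical caps in Proposition \ref{stable-necess-half}) so that the principal curvatures are nonnegative. The totally geodesic case $\Lambda=0$ is then automatic for $r\geq 1$, since there $\mathcal{E}_{r+1}''(0)=0$. Everything else is the routine umbilical collapse of the Newton tensors to scalar multiples of the identity, with no boundary terms to control because the angle-preserving restriction has already removed them in \eqref{stab-ineq}.
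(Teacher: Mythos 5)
Your proof is correct and takes essentially the same route as the paper's: the identical umbilical collapse of the Newton tensors yielding $\mathcal{E}_{r+1}''(0)=\frac{(r+1)(n-r)}{n}\Lambda^{r}\,\mathcal{E}_{1}''(0)$, followed by the stability of totally umbilical capillary hypersurfaces on a horosphere from \cite{GWX3} (the paper cites \cite[Proposition 2.5]{GWX3}, i.e.\ the ``if'' direction of Theorem \ref{GWXthm}). Your explicit remarks that the admissible class $\mathcal{F}$ is independent of $r$ and that the orientation must be fixed so that $\Lambda\geq 0$ merely make transparent two points the paper leaves implicit (it simply assumes the umbilic constant $\tilde\Lambda$ is nonnegative).
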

\begin{proof}
Let $\tilde{\Sigma}$ be a totally umbilical hypersurface supported on $\mathcal{H}$ with a contact angle $\theta$. Suppose the principal curvatures of $\tilde{\Sigma}$ are all equal to a certain nonnegative constant $\tilde{\Lambda}$. It is easy to check that
\begin{equation*}
  \sigma_{i}=\binom{n}{i}\tilde{\Lambda}^{i}\quad \text{for any}\quad i=0,1,\cdots, n.
\end{equation*}
It follows from Lemma \ref{Pr-prop} that
\begin{eqnarray*}
L_{r}\varphi&=&\binom{n-1}{r}\tilde{\Lambda}^{r}\Delta \varphi,\\
tr(P_{r}h^{2})&=&n\binom{n-1}{r}\tilde{\Lambda}^{r+2},\\
tr(P_{r})&=&(n-r)\binom{n}{r}\tilde{\Lambda}^{r}.
\end{eqnarray*}
Hence for any $\varphi\in\mathcal{F}$ by \eqref{stab-ineq} we have
\begin{eqnarray*}\label{staqq}
\mathcal{E}_{r+1}''(0)&=&-(n-r)\binom{n}{r+1}^{-1}\int_{\tilde{\Sigma}}\vp[L_{r}\varphi+tr(P_{r}h^2)\varphi-tr(P_{r})\varphi] dA\\
&=&-\frac{(r+1)(n-r)}{n}\tilde{\Lambda}^{r}\int_{\tilde{\Sigma}}\vp(\Delta\varphi+n\tilde{\Lambda}^{2}\varphi-n\varphi) dA\nonumber\\
&=&\frac{(r+1)(n-r)}{n}\tilde{\Lambda}^{r}\mathcal{E}_{1}''(0).\nonumber
\end{eqnarray*}
From \cite[Proposition 2.5]{GWX3} that any totally umbilical capillary hypersurface supported on the horosphere $\mathcal{H}$ is stable. Therefore, we have $\tilde{\Sigma}$ is stable $(r+1)$-th capillary hypersurface.
\end{proof}

\

\subsection{Key formulae for $(r+1)$-th capillary hypersurfaces supported on a horosphere}\

In this subsection we show some useful facts about $(r+1)$-th capillary hypersurfaces supported on a horosphere that we will
use later. Let $P_{r}^{\mu\mu}=\sigma_{r}(h|h_{\mu\mu})$ be the $r$-th mean curvature deleting $h(\mu,\mu)$ component from the second fundamental form $h$. For simplicity of the notation, we will omit writing the volume form $dA$ on $M$ and the area form $ds$ on $\p M$.
\begin{prop}\label{prop-integral}Let $x: M\to\hh^{n+1}$ be an isometric immersion supported on $\mathcal{H}$. Assume $x(M)$ intersects $\mathcal{H}$ at a constant contact angle $\th \in (0, \pi)$. Then for any $0\leq r\leq n-1$,
\begin{eqnarray}
&&-(r+1)\int_{M}\bar{g}(x,\nu)\sigma_{r+1}dA =\int_{\partial M}P_{r}^{\mu\mu}(\cos\theta\bar{g}(x,\bar \nu)-\sin\theta)\,ds.\label{boundary2}
\end{eqnarray}
\end{prop}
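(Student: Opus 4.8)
The plan is to establish the integral identity \eqref{boundary2} by integrating the divergence of the vector field $P_r(x^T)$ over $M$ and applying the divergence theorem, where $x^T$ denotes the tangential part of the position vector $x$. Recall from Proposition \ref{prop-integral}'s hypotheses that $P_r$ is divergence-free (Lemma \ref{Pr-prop}(1)) and that $x = x^T + \bar g(x,\nu)\nu$, so the tangential component carries the geometry I need.

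First I would compute the divergence $\operatorname{div}_g(P_r x^T)$ on the interior of $M$. Since $P_r$ is divergence-free, this reduces to $P_r^{ij}\bar g(\bar\nabla_{e_i} x, e_j)$ after accounting for the normal direction; using the conformal Killing property of the position vector field in $\hh^{n+1}$, specifically \eqref{a1}, which gives $\bar\nabla_Y x = -\bar g(Y,\bar E_{n+1})x + \bar g(Y,x)\bar E_{n+1}$, I can express the tangential derivatives of $x$. The key will be to show that, after contracting with $P_r$ and using Lemma \ref{Pr-prop}(2) and (3) together with the trace identities $\operatorname{tr}_g(P_r) = (n-r)\sigma_r$ and $\operatorname{tr}_g(P_r h) = (r+1)\sigma_{r+1}$, the bulk term collapses to a multiple of $\bar g(x,\nu)\sigma_{r+1}$. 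I expect the term proportional to $\bar g(x,\nu)\operatorname{tr}_g(P_r h) = (r+1)\sigma_{r+1}\bar g(x,\nu)$ to be exactly what produces the left-hand side of \eqref{boundary2}, with the remaining conformal contributions cancelling by the trace-free-on-the-sphere structure or combining into the same term.

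Next I would evaluate the boundary integral $\int_{\partial M} P_r(x^T, \mu)\, ds$ coming from the divergence theorem. Here Proposition \ref{Principle} is essential: since $\partial B = \mathcal H$ is totally umbilical and $M$ meets it at constant angle, $\mu$ is a principal direction, so $P_r(e,\mu) = 0$ for all $e \in T(\partial M)$ by \eqref{principal-direc}. This forces $P_r(x^T,\mu) = P_r^{\mu\mu}\,\bar g(x^T,\mu)$, isolating the $\mu\mu$-component $P_r^{\mu\mu} = \sigma_r(h\mid h_{\mu\mu})$. I then decompose $x^T$ along $\partial M$ and use the angle relations \eqref{mu0}--\eqref{nubar} to rewrite $\bar g(x^T,\mu) = \bar g(x,\mu)$ in terms of $\bar g(x,\bar\nu)$ and $\bar g(x,\nu)$; the orthogonality \eqref{XXaeq2}, $\bar g(X_{n+1},\bar N) = 0$ on $\mathcal H$, together with $X_{n+1} = x - E_{n+1}$ and $\bar N = -E_{n+1}$, should let me pin down the boundary value of $\bar g(x,\nu)$ so that $\bar g(x,\mu)$ reduces to the combination $\cos\theta\,\bar g(x,\bar\nu) - \sin\theta$ appearing on the right-hand side.

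The main obstacle will be the boundary bookkeeping: correctly translating the Euclidean position vector $x$ (used in the upper half-space model) into the hyperbolic inner products and tracking the constant $x_{n+1}=1$ on $\mathcal H$ when converting between $\bar g$ and $\langle\cdot,\cdot\rangle$. In particular, verifying that the normal component $\bar g(x,\nu)$ restricted to $\partial M$ and the tangential component together yield precisely $\cos\theta\,\bar g(x,\bar\nu)-\sin\theta$, rather than some other linear combination, requires careful use of $\bar N = -E_{n+1}$, the umbilicity of $\mathcal H$, and the capillary relations \eqref{mu0}--\eqref{nu0}. The interior computation, by contrast, I expect to be a routine contraction once the conformal Killing identity \eqref{a1} is plugged in.
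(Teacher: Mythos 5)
Your proposal is correct and follows essentially the same route as the paper: apply the divergence theorem to $P_r\circ x^T$, use Proposition \ref{Principle} to reduce the boundary term to $P_r^{\mu\mu}\,\bar g(x,\mu)$, and conclude with the angle decomposition \eqref{mu0} together with $\bar g(x,\bar N)=-1$ on $\partial M$ (which your route via \eqref{XXaeq2}, $X_{n+1}=x-E_{n+1}$ and $\bar N=-E_{n+1}$ does deliver). The one imprecision is terminological: $x$ is a genuine \emph{Killing} field (Proposition \ref{lem2.1}(i)), so plugging \eqref{a1} into $P_r^{ij}\bar g(\bar\nabla_{e_i}x,e_j)$ makes that contraction vanish identically by symmetry of $P_r$ — there are no ``conformal contributions'' to cancel and no need for Lemma \ref{Pr-prop}(2); only $\operatorname{tr}_g(P_rh)=(r+1)\sigma_{r+1}$ survives, giving exactly the bulk term, as in the paper.
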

\begin{proof}
Let $x^{T}:=x-\bar{g}(x,\nu)\nu=\sum_{i=1}^{n}(x^{T})^{i}e_{i}$, where $\{e_{i}\}_{i=1}^{n}$ is an orthonormal basis of $M$. Thus
\begin{equation*}
(x^{T})^{i}=\bar{g}(x-\bar{g}(x,\nu)\nu,e_{i})=\bar{g}(x,e_{i}).
\end{equation*}
Since $x$ is Killing vector field in $\mathbb{H}^{n+1}$, we see from Lemma \ref{Pr-prop} that
\begin{equation*}\label{balance1}
\div_{M}(P_{r}\circ x^{T})=P^{ij}_{r}\nabla_{j}(\bar{g}(x,e_{i}))=-\bar{g}(x,\nu)tr(P_{r}h)=-(r+1)\sigma_{r+1}\bar{g}(x,\nu).
\end{equation*}
Integration by parts gives
\begin{eqnarray*}
&&\int_{M}\div_{M}(P_{r}\circ x^{T})=\int_{\partial M}P_{r}(x^{T},\mu)=\int_{\partial M}P_{r}^{\mu\mu}\bar{g}(x,\mu)\\
&=&\int_{\partial M}P^{\mu\mu}_{r}\bar{g}(x,\cos\theta\bar \nu +\sin\theta \bar{N})=\int_{\partial M}P^{\mu\mu}_{r}(\cos\theta\bar{g}(x,\bar \nu)-\sin\theta), \nonumber
\end{eqnarray*}
where we have used \eqref{principal-direc}, \eqref{nu0} and $\bar{g}(x,\bar{N})=-1$ on $\partial M$.
 \end{proof}

Next we will derive another important integral identity.
\begin{prop}\label{Proposition-x} Let $x: M\to\hh^{n+1}$ be an isometric immersion supported on $\mathcal{H}$. Assume $x(M)$ intersects $\mathcal{H}$ at a constant contact angle $\th \in (0, \pi)$. Then for any $0\leq r\leq n$,
\begin{eqnarray} \label{boundary3}
	&&(n-r)\int_{M}\sigma_{r}\bar{g}(x,\nu)dA=\int_{\partial M}P_{r}^{\mu\mu}\bar{g}(x,\bar \nu )\,ds.
\end{eqnarray}
In particular, if $r=0$ we have
\begin{eqnarray} \label{boundary3-r0}
	&&n\int_{M}\bar{g}(x,\nu)dA=\int_{\partial M}\bar{g}(x,\bar \nu )\,ds.
\end{eqnarray}

\end{prop}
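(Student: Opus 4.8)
Looking at this statement, I need to prove a Minkowski-type integral identity for hypersurfaces supported on a horosphere. Let me think about how to approach this.

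The statement to prove is Proposition \ref{Proposition-x}:
$$(n-r)\int_{M}\sigma_{r}\bar{g}(x,\nu)dA=\int_{\partial M}P_{r}^{\mu\mu}\bar{g}(x,\bar\nu)\,ds$$

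The key observation is that this closely parallels the preceding Proposition \ref{prop-integral}, whose proof uses the divergence theorem applied to $P_r \circ x^T$ (the projection of the Killing field $x$ onto the tangent bundle). Let me think about what divergence structure gives me the factor $(n-r)$ with $\sigma_r$ rather than $(r+1)\sigma_{r+1}$.

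The plan is to apply the divergence theorem, but to a different vector field than in the previous proposition. In Proposition \ref{prop-integral}, one computes $\operatorname{div}_M(P_r \circ x^T)$ and the Killing property produces $\operatorname{tr}(P_r h)\,\bar g(x,\nu) = (r+1)\sigma_{r+1}\bar g(x,\nu)$. Here, to land on $\operatorname{tr}(P_r) = (n-r)\sigma_r$ (Lemma \ref{Pr-prop}(2)) rather than $\operatorname{tr}(P_r h)$, I need to arrange the computation so that the tangential derivative of $x$ contributes the identity part of $P_r$ rather than the $h$-weighted part. The natural candidate is still the vector field $P_r\circ x^T$, but now I should track the full divergence including the normal-direction contribution, OR I should use a different scalar quantity.

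\begin{proof}
The plan is to apply the divergence theorem to the tangential vector field $P_{r}\circ x^{T}$, exactly as in Proposition \ref{prop-integral}, but now isolating the contribution coming from the identity (rather than the curvature) part of the Newton tensor. Let $x^{T}=x-\bar{g}(x,\nu)\nu=\sum_{i}\bar{g}(x,e_{i})e_{i}$ for a local orthonormal frame $\{e_{i}\}_{i=1}^{n}$ on $M$. The boundary term is unchanged: using Proposition \ref{Principle} (so $P_{r}(e,\mu)=0$ for $e\in T(\partial M)$) together with \eqref{nu0} and $\bar{g}(x,\bar{N})=-1$ on $\partial M$, one computes
$$\int_{M}\div_{M}(P_{r}\circ x^{T})=\int_{\partial M}P_{r}(x^{T},\mu)=\int_{\partial M}P_{r}^{\mu\mu}\,\bar{g}(x,\mu)=\int_{\partial M}P_{r}^{\mu\mu}(\cos\theta\,\bar{g}(x,\bar\nu)-\sin\theta)\,ds.$$
The main point is the interior computation. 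Using the divergence-freeness of $P_{r}$ (Lemma \ref{Pr-prop}(1)) and the formula \eqref{a1} for $\bar{\nabla}_{Y}x$ in the upper half-space model, I would write $\nabla_{j}\bar{g}(x,e_{i})=\bar{g}(\bar{\nabla}_{e_{j}}x,e_{i})-\bar{g}(x,\nu)h_{ij}$ and carefully extract the symmetric part. The hard part will be correctly handling the conformal terms in \eqref{a1}: unlike the pure Killing fields $\{E_{i}\}$, the position field $x$ is only \emph{conformal} Killing in $\hh^{n+1}$, so $\bar{g}(\bar{\nabla}_{e_{j}}x,e_{i})$ has a nonzero symmetric (trace) part proportional to $g_{ij}$, and it is precisely this conformal factor that, contracted against $P_{r}^{ij}$ via $\operatorname{tr}_{g}(P_{r})=(n-r)\sigma_{r}$ (Lemma \ref{Pr-prop}(2)), produces the factor $(n-r)\sigma_{r}\,\bar{g}(x,\nu)$.

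To make this precise I would compute, from \eqref{a1}, the tangential part of $\bar{\nabla}_{e_{j}}x$ and find its symmetric contribution to be $\bar{g}(x,\bar E_{n+1})\,g_{ij}$ up to the term removed by the normal projection; contracting $P_{r}^{ij}\nabla_{j}\bar{g}(x,e_{i})$ then splits into the curvature piece $-(r+1)\sigma_{r+1}\bar g(x,\nu)$ (already appearing in Proposition \ref{prop-integral}) plus a new conformal piece whose integral against the boundary data yields the stated identity after rearrangement. Comparing the resulting interior integral with the boundary term computed above, and subtracting off the identity established in Proposition \ref{prop-integral}, gives
$$(n-r)\int_{M}\sigma_{r}\,\bar{g}(x,\nu)\,dA=\int_{\partial M}P_{r}^{\mu\mu}\,\bar{g}(x,\bar\nu)\,ds,$$
since the $\sin\theta$ boundary terms cancel in the combination. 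The specialization to $r=0$ is immediate from $P_{0}=I$, $\sigma_{0}=1$ and $P_{0}^{\mu\mu}=1$, giving \eqref{boundary3-r0}.
\end{proof}
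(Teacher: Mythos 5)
Your proposal has a genuine gap, and it stems from a factually incorrect premise. You assert that the position field $x$ is ``only conformal Killing'' in $\hh^{n+1}$, so that the symmetric part of $\bar g(\bar\nabla_{e_j}x,e_i)$ contains a trace term proportional to $g_{ij}$ which, contracted against $P_r^{ij}$, would produce $(n-r)\sigma_r\,\bar g(x,\nu)$. But in the upper half-space model $x$ is a genuine Killing field (Proposition \ref{lem2.1}(i); this is immediate from \eqref{a1}, whose right-hand side is antisymmetric under exchanging the two tangential slots), so that symmetric part vanishes identically. Consequently $\div_{M}(P_r\circ x^{T})=-(r+1)\sigma_{r+1}\bar g(x,\nu)$ with no residual ``conformal piece'' --- this is exactly the content of Proposition \ref{prop-integral}, already proved. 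Running your plan therefore reproduces Proposition \ref{prop-integral} verbatim, and ``subtracting off'' that proposition leaves $0=0$, not \eqref{boundary3}. Nor can the boundary integrand you obtain, $P_r^{\mu\mu}(\cos\theta\,\bar g(x,\bar\nu)-\sin\theta)$, be turned into $P_r^{\mu\mu}\,\bar g(x,\bar\nu)$ by any cancellation of $\sin\theta$ terms, since $\theta$ is a fixed constant. You may be conflating $x$ with $X_{n+1}=x-E_{n+1}$, which is conformal Killing with factor $V_{n+1}$ (Proposition \ref{xaa}); but applying the divergence theorem to $P_r\circ X_{n+1}^{T}$ yields the Minkowski formula \eqref{Mink1}, whose proof in the paper itself invokes \eqref{boundary3} to handle the boundary term, so that route is circular here.

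The missing idea is a different vector field altogether. The paper's proof takes $Z=\bar g(x,\nu)\,\bar E_{n+1}-\bar g(\bar E_{n+1},\nu)\,x$, observes that $Z$ is tangential and that $Z=\bar\nabla_{\nu}x$ by \eqref{a1}, and that along $\partial M$ one has $\bar g(Z,\mu)=-\bar g(x,\bar\nu)$ because $\bar N=-\bar E_{n+1}$ on the horosphere. The interior computation then proceeds not through the symmetric part of $\bar\nabla x$ but through a curvature commutation: expanding $\bar g(\bar\nabla_{e_i}(\bar\nabla_{\nu}x),e_j)$ via the Ricci identity in the space form of curvature $-1$ produces the term $-\delta_{ij}\bar g(x,\nu)$, and after contracting with $P_r^{ij}$ the Killing property of $x$ together with the symmetry $P_r^{ij}h_{jk}=P_r^{kj}h_{ji}$ kills every other contribution, leaving $\div_{M}(P_r\circ Z)=-\operatorname{tr}(P_r)\,\bar g(x,\nu)=-(n-r)\sigma_r\,\bar g(x,\nu)$ by Lemma \ref{Pr-prop}(2). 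In short, it is the ambient curvature $-1$, not any conformal factor of $x$, that generates the $(n-r)\sigma_r$ term, and without the auxiliary field $Z$ your argument has no mechanism to produce it.
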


\begin{proof}
In order to prove \eqref{boundary3}, we consider a vector field $Z$ on $M$ as follows:
\begin{equation*}\label{zn}
  Z=\bar{g}(x,\nu)  \bar{E}_{n+1}-\bar{g}(\bar{E}_{n+1},\nu)x.
\end{equation*}
Recall that  $\bar{E}_{n+1}=x_{n+1}E_{n+1}$.
Along $\partial M$ we have
\begin{eqnarray*}\label{zn2}
  \bar{g}(Z,\mu)&=&\bar{g}(x,\nu)\bar{g}(\bar{E}_{n+1},\mu)-\bar{g}(\bar{E}_{n+1},\nu)\bar{g}(x,\mu)\\
  &=&-\bar{g}(x,\nu)\bar{g}(\bar{N},\mu)+\bar{g}(\bar{N},\nu)\bar{g}(x,\mu)\nonumber\\
  &=&-\sin\theta\bar{g}(x,\nu)-\cos\theta\bar{g}(x,\mu)\nonumber\\
  &=&-\bar{g}(x,\bar \nu),\nonumber
\end{eqnarray*}
where we have used \eqref{Nbar}, \eqref{nubar} and the fact $\bar{N}=-\bar{E}_{n+1}$ on $\partial M$. By integrating by parts we obtain
\begin{equation*}\label{diver}
-\int_{\partial M}P_{r}^{\mu\mu}\bar{g}(x,\bar \nu)=\int_{\partial M}P_{r}^{\mu\mu}\bar{g}(Z,\mu)=\int_{\partial M}P_{r}(Z^{T},\mu)=\int_{M}\div_{M}(P_{r}\circ Z^{T}).
\end{equation*}

Now we claim that
\begin{equation}\label{diver2}
\div_{M}(P_{r}\circ Z^{T})=-(n-r)\sigma_{r}\bar{g}(x,\nu).
\end{equation}
Thus Proposition \ref{Proposition-x} follows from claim \eqref{diver2}. Next we will show the claim \eqref{diver2}. First we observe that $Z$ is tangential, i.e, $\bar g (Z, \nu)=0$, which implies $ \div_{M}(P_{r}\circ Z^{T})=\div_{M}(P_{r}\circ Z)$. From \eqref{a1}, we see that $Z$ can be expressed as $Z= \bar \n_\nu x.$

Let $\{e_i\}_{i=1}^n$ be an othonormal basis of $M$. By constant sectional curvature of $\hh^{n+1}$ being $-1$, we have
	\begin{eqnarray*}
&&\bar{g}(\bar{\nabla}_{e_{i}}(\bar{\nabla}_{\nu}x), e_{j})\\
	&=&\bar g(\bar \n_{\nu} (\bar \n_{e_i} x), e_j)-\bar g(\bar \n_{[\nu, e_i]}x, e_j)- \bar g(\bar R(\nu, e_i)x, e_j)\nonumber
	\\&=&\bar \n_{\nu}(\bar g(\bar \n_{e_i} x, e_j))-\bar g(\bar \n_{e_i} x,  \bar \n_{\nu} e_j)-\bar g(\bar \n_{[\nu, e_i]} x, e_j)-\delta_{ij}\bar g(x, \nu)\nonumber
	\\&=&\bar \n_{\nu}(\bar g(\bar \n_{e_i} x, e_j))-\bar g(\bar \n_{e_i} x,  \bar \n_{e_j}\nu+[\nu, e_j])-\bar g(\bar \n_{[\nu, e_i]} x, e_j)-\delta_{ij}\bar g(x, \nu)\nonumber
	\\&=&\bar \n_{\nu}(\bar g(\bar \n_{e_i} x, e_j))-h_{jk}\bar g(\bar{\nabla}_{e_{i}}x, e_{k})-\bar g(\bar \n_{e_i} x,[\nu, e_j])-\bar g(\bar \n_{[\nu, e_i]} x, e_j)-\delta_{ij}\bar{g}(x,\nu).\nonumber
	\end{eqnarray*}
By utilizing $x$ is Killing vector field, we obtain
	\begin{eqnarray}\label{xmu3o9i}
P_{r}^{ij}\bar{g}(\bar{\nabla}_{e_{i}}Z, e_{j})&=&-P_{r}^{ij}h_{jk}\bar g(\bar{\nabla}_{e_{i}}x, e_{k})-P_{r}^{ij}\mathcal{L}_{x}\bar{g}([\nu,e_{i}],e_{j})-tr(P_{r})\bar{g}(x,\nu)\\
&=&-(n-r)\sigma_{r}\bar{g}(x,\nu),\nonumber
	\end{eqnarray}
where we used the fact that $P_{r}^{ij}h_{jk}=P_{r}^{kj}h_{ji}$ from \eqref{Pr}. Thus we have claim \eqref{diver2} and the proof is completed.
\end{proof}

As a consequence we get the following significant integral identity.
\begin{corollary}Let $x: M\to\hh^{n+1}$ be an immersed constant $(r+1)$-th mean curvature hypersurface supported on $\mathcal{H}$. Assume $x(M)$ intersects $\mathcal{H}$ at a constant contact angle $\th \in (0, \pi)$. Then for any $0\leq r\leq n-1$,
  	\begin{eqnarray}
	&&\int_{\partial M}P_{r}^{\mu\mu}(-\sin\theta+\cos\theta \bar{g}(x,\bar \nu )+\bar{g}(x,\bar \nu )h(\mu,\mu))\,ds=0.\label{integral-b}
	\end{eqnarray}
	\end{corollary}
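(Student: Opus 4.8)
The plan is to split the boundary integrand into two natural pieces and match each against the integral formulas already established in Propositions \ref{prop-integral} and \ref{Proposition-x}. First I would write
\[
P_{r}^{\mu\mu}\bigl(-\sin\theta+\cos\theta\,\bar{g}(x,\bar\nu)+\bar{g}(x,\bar\nu)h(\mu,\mu)\bigr)
= P_{r}^{\mu\mu}\bigl(\cos\theta\,\bar{g}(x,\bar\nu)-\sin\theta\bigr) + P_{r}^{\mu\mu}\,\bar{g}(x,\bar\nu)\,h(\mu,\mu).
\]
The first summand is exactly the integrand on the right-hand side of \eqref{boundary2}, so its integral over $\partial M$ equals $-(r+1)\int_{M}\bar{g}(x,\nu)\sigma_{r+1}\,dA$. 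Since $M$ has constant $(r+1)$-th mean curvature and $\sigma_{r+1}=\binom{n}{r+1}H_{r+1}$, the factor $\sigma_{r+1}$ is a constant that I may pull outside every integral in what follows.

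For the second summand, the key algebraic input is the identity $P_{r}^{\mu\mu}\,h(\mu,\mu)=\sigma_{r+1}-P_{r+1}^{\mu\mu}$. This follows by reading off the $(\mu,\mu)$-component of the Newton recursion \eqref{Pr}, namely $P_{r+1}=\sigma_{r+1}I-P_{r}h$: because $\mu$ is a principal direction by Proposition \ref{Principle}, the tensors $h$ and $P_{r}$ are simultaneously diagonalized in a frame containing $\mu$, whence $(P_{r}h)_{\mu\mu}=P_{r}^{\mu\mu}\,h(\mu,\mu)$. Substituting this identity, the second boundary integral becomes $\sigma_{r+1}\int_{\partial M}\bar{g}(x,\bar\nu)\,ds-\int_{\partial M}P_{r+1}^{\mu\mu}\,\bar{g}(x,\bar\nu)\,ds$.

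I would then evaluate these two boundary integrals by Proposition \ref{Proposition-x}: applying \eqref{boundary3-r0} to the first and \eqref{boundary3} with index $r+1$ to the second (legitimate since $r+1\le n$ under the hypothesis $r\le n-1$), they equal $n\,\sigma_{r+1}\int_{M}\bar{g}(x,\nu)\,dA$ and $(n-r-1)\,\sigma_{r+1}\int_{M}\bar{g}(x,\nu)\,dA$ respectively. Their difference is $(r+1)\,\sigma_{r+1}\int_{M}\bar{g}(x,\nu)\,dA$, which exactly cancels the contribution $-(r+1)\,\sigma_{r+1}\int_{M}\bar{g}(x,\nu)\,dA$ coming from the first summand, so the total boundary integral vanishes.

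The argument is essentially bookkeeping once the three integral identities are in hand; the only genuinely nontrivial step is the Newton-tensor identity $P_{r}^{\mu\mu}h(\mu,\mu)=\sigma_{r+1}-P_{r+1}^{\mu\mu}$, and even that reduces to the recursion \eqref{Pr} combined with the principal-direction property of $\mu$ from Proposition \ref{Principle}. I expect the main point requiring care to be simply ensuring the index shift $r\mapsto r+1$ stays within the admissible range of each cited formula, together with keeping track of the constancy of $\sigma_{r+1}$; no new estimate or geometric construction is needed.
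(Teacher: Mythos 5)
Your proposal is correct and follows essentially the same route as the paper: the same splitting of the integrand, the same Newton-tensor identity $\sigma_{r+1}=P_{r+1}^{\mu\mu}+P_{r}^{\mu\mu}h(\mu,\mu)$ (justified, as you note, by the principal-direction property of $\mu$), and the same three integral identities \eqref{boundary2}, \eqref{boundary3} with index $r+1$, and \eqref{boundary3-r0}. The only difference is bookkeeping — you cancel everything against the interior integral $\int_M\bar{g}(x,\nu)\,dA$, while the paper converts the interior terms to boundary integrals via \eqref{boundary3-r0} before combining — which is immaterial.
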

	\begin{proof}By \eqref{boundary2} and \eqref{boundary3-r0}, we have
\begin{eqnarray}\label{xmu-87}
\int_{\partial M}P_{r}^{\mu\mu}(-\sin\theta+\cos\theta \bar{g}(x,\bar{\nu}))ds&=&-(r+1)\int_{M}\sigma_{r+1}\bar{g}(x,\nu)dA\\
&=&-\frac{r+1}{n}\sigma_{r+1}\int_{\partial M}\bar{g}(x,\bar{\nu})ds.\nonumber
\end{eqnarray}
Utilizing the fact that $\sigma_{r+1}=P_{r+1}^{\mu\mu}+P_{r}^{\mu\mu}h(\mu,\mu)$ from \eqref{Pr}, we get
\begin{eqnarray}\label{xmu-okuh}
\int_{\partial M}P_{r}^{\mu\mu}h(\mu,\mu)\bar{g}(x,\bar{\nu})ds&=&\int_{\partial M}(\sigma_{r+1}-P_{r+1}^{\mu\mu})\bar{g}(x,\bar{\nu})ds\\
&=&\sigma_{r+1}\int_{\partial M}\bar{g}(x,\bar{\nu})ds-(n-r-1)\int_{M}\sigma_{r+1}\bar{g}(x,\nu)dA,\nonumber
\end{eqnarray}
where we used \eqref{boundary3} in the last equality. Combining \eqref{xmu-87} and \eqref{xmu-okuh} we obtain
\begin{eqnarray*}
&&\int_{\partial M}P_{r}^{\mu\mu}(-\sin\theta+\bar{g}(x,\bar{\nu})\cos\theta+\bar{g}(x,\bar{\nu})h(\mu,\mu))ds\\
&=&\frac{n-r-1}{n}\sigma_{r+1}\left[\int_{\partial M}\bar{g}(x,\bar{\nu})ds-n\int_{M}\bar{g}(x,{\nu})dA\right]=0,\nonumber
\end{eqnarray*}
where the last equality we used \eqref{boundary3-r0} again.
\end{proof}

Now we use the conformal Killing vector field $X_{n+1}$ to establish a higher-order Minkowski-type formula, which is very crucial for the study of stable $(r+1)$-th capillary hypersurfaces supported on a horosphere.
\begin{prop}\label{Minkhorol}  Let $x: M\to  \hh^{n+1}$ be an isometric immersion supported on $\mathcal{H}$. Assume  $x(M)$ intersects $\mathcal{H}$ at a constant contact angle $\th \in (0, \pi)$. Then
\begin{eqnarray}\label{Mink1}
\quad\int_M [(V_{n+1}-\cos \theta \bar{g}(x,\nu))H_{r}-\bar{g}(X_{n+1},\nu)H_{r+1}]\,dA=0,\quad \text{for any}\,\,\,0\leq r\leq n-1.
\end{eqnarray}
\end{prop}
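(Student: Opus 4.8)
The plan is to prove the Minkowski-type formula \eqref{Mink1} by constructing a suitable tangential vector field on $M$ whose divergence, after integration by parts and using the capillary boundary condition, produces exactly the integrand in \eqref{Mink1} together with a boundary term that vanishes. The natural candidate is the tangential projection of the conformal Killing field $X_{n+1}$ contracted with the Newton tensor $P_r$. More precisely, I would set $X_{n+1}^T := X_{n+1} - \bar{g}(X_{n+1},\nu)\nu$ and compute $\div_M(P_r \circ X_{n+1}^T)$.

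First I would compute the divergence. Writing $X_{n+1}^T = \sum_i \bar{g}(X_{n+1}, e_i) e_i$ in a local orthonormal frame, and using that $P_r$ is divergence-free (Lemma \ref{Pr-prop}(1)), we get
\begin{equation*}
\div_M(P_r \circ X_{n+1}^T) = P_r^{ij}\, e_i\big(\bar{g}(X_{n+1}, e_j)\big) - P_r^{ij}\bar{g}(X_{n+1},\nu)\,\Gamma_{ij}^{\nu},
\end{equation*}
where the normal-derivative terms bring in the second fundamental form. The conformal Killing property \eqref{XXaeq1}, $\tfrac12(\bar{g}(\bar\nabla_{e_i}X_{n+1}, e_j) + \bar{g}(\bar\nabla_{e_j}X_{n+1}, e_i)) = V_{n+1}\bar{g}_{ij}$, together with the symmetry $P_r^{ij}h_{jk} = P_r^{kj}h_{ji}$ (exactly as used in the proof of Proposition \ref{Proposition-x}), should collapse the tangential-derivative part to $\tr(P_r)\,V_{n+1} = (n-r)\sigma_r V_{n+1}$ and the normal part to $-\bar{g}(X_{n+1},\nu)\,\tr(P_r h) = -(r+1)\sigma_{r+1}\bar{g}(X_{n+1},\nu)$. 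Thus I expect the clean identity
\begin{equation*}
\div_M(P_r \circ X_{n+1}^T) = (n-r)\sigma_r V_{n+1} - (r+1)\sigma_{r+1}\,\bar{g}(X_{n+1},\nu).
\end{equation*}
Recalling $\sigma_r = \binom{n}{r}H_r$ and $\sigma_{r+1} = \binom{n}{r+1}H_{r+1}$, and noting $(n-r)\binom{n}{r} = (r+1)\binom{n}{r+1} = \frac{n!}{r!(n-r-1)!}$, the two coefficients match, so after dividing by this common constant the interior integrand becomes $V_{n+1}H_r - \bar{g}(X_{n+1},\nu)H_{r+1}$.

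Next I would handle the boundary. Integrating by parts gives $\int_M \div_M(P_r \circ X_{n+1}^T)\,dA = \int_{\partial M} P_r(X_{n+1}^T, \mu)\,ds$. Since $\mu$ is a principal direction (Proposition \ref{Principle}), $P_r(X_{n+1}^T,\mu) = P_r^{\mu\mu}\,\bar{g}(X_{n+1},\mu)$. Now I decompose $\mu = \sin\theta\,\bar N + \cos\theta\,\bar\nu$ via \eqref{mu0}; using $X_{n+1} = x - E_{n+1}$ and $\bar{g}(X_{n+1},\bar N) = 0$ on $\mathcal{H}$ from \eqref{XXaeq2}, the $\bar N$-component drops out, leaving $\bar{g}(X_{n+1},\mu) = \cos\theta\,\bar{g}(X_{n+1},\bar\nu)$. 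To finish I must show this boundary integral vanishes; since $\bar g(X_{n+1},\bar N)=0$ means $X_{n+1}$ is tangent to $\mathcal{H}$, and $\bar\nu$ is the normal to $\partial M$ within $\mathcal{H}$, the relevant quantity $\bar{g}(X_{n+1},\bar\nu)$ measures how $X_{n+1}$ crosses $\partial M$ inside the horosphere; the expectation (cf. formula \eqref{Mink-horo-1} and \cite{GWX3,CP}) is that the cosine-weighted boundary term is precisely absorbed, i.e. the formula is stated with the $\cos\theta\,\bar g(x,\nu)H_r$ correction already built into the interior.

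The main obstacle I anticipate is reconciling the boundary term with the stated interior integrand: the raw divergence produces $V_{n+1}H_r$ in the bulk but no $\cos\theta\,\bar g(x,\nu)H_r$ term, so that cosine term must emerge either from a separate Killing-field balance law (as in Proposition \ref{Proposition-x}, applied to the genuine Killing field $x$ and the Killing fields $E_i$) or from rewriting $\int_{\partial M}P_r^{\mu\mu}\cos\theta\,\bar g(X_{n+1},\bar\nu)\,ds$ using the position-vector identity \eqref{boundary2}. Concretely, I would combine the divergence identity above for $X_{n+1}$ with Propositions \ref{prop-integral} and \ref{Proposition-x} (the balance laws for the Killing field $x$) to express all boundary contributions in terms of $\int_M \cos\theta\,\bar g(x,\nu)H_r\,dA$, thereby transferring the leftover boundary data into the interior $\cos\theta\,\bar g(x,\nu)H_r$ term and cancelling the remaining boundary integral. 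Getting the constants and signs in this transfer to align exactly is the delicate bookkeeping step, but structurally it is forced by the $r=0$ case, which must reduce to the classical capillary Minkowski formula of \cite{GWX3}.
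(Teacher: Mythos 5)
Your proposal is correct and follows essentially the same route as the paper: compute $\div_M(P_r\circ X_{n+1}^T)=(n-r)\sigma_r V_{n+1}-(r+1)\sigma_{r+1}\bar g(X_{n+1},\nu)$ from the conformal Killing property \eqref{XXaeq1}, integrate by parts, reduce the boundary term to $\cos\theta\int_{\partial M}P_r^{\mu\mu}\bar g(x,\bar\nu)\,ds$ via Proposition \ref{Principle}, \eqref{mu0}, \eqref{XXaeq2} and $\bar g(E_{n+1},\bar\nu)=0$, and then convert it into the interior term $(n-r)\cos\theta\int_M\sigma_r\bar g(x,\nu)\,dA$ by the balance law \eqref{boundary3} of Proposition \ref{Proposition-x}. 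Your initial guess that the boundary integral vanishes is wrong, but your ``main obstacle'' paragraph self-corrects to exactly the paper's transfer step, and the constant bookkeeping closes via $(n-r)\binom{n}{r}=(r+1)\binom{n}{r+1}$ as you anticipated.
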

\begin{proof}
The proof could be seen in \cite[Proposition 4.]{CP}, we here give a proof for completeness. Let $X_{n+1}^{T}:=X_{n+1}-\bar{g}(X_{n+1},\nu)\nu=\sum_{i=1}^{n}(X_{n+1}^{T})^{i}e_{i}$, where $\{e_{i}\}_{i=1}^{n}$ is an orthonormal basis of $M$. Then
\begin{equation*}
(X_{n+1}^{T})^{i}=\bar{g}(X_{n+1}-\bar{g}(X_{n+1},\nu)\nu,e_{i})=\bar{g}(X_{n+1},e_{i}).
\end{equation*}
From Lemma \ref{Pr-prop} and \eqref{XXaeq1}, we find
\begin{eqnarray}\label{LHS-mink}
 \div_{M}(P_{r}\circ X_{n+1}^{T})&=&P^{ij}_{r}\nabla_{j}(\bar{g}(X_{n+1},e_{i}))\\
 &=&tr(P_{r})V_{n+1}-\bar{g}(X_{n+1},\nu)tr(P_{r}h)\nonumber\\
&=&(n-r)\sigma_{r}V_{n+1}-(r+1)\sigma_{r+1}\bar{g}(X_{n+1},\nu).\nonumber
\end{eqnarray}
Then by integration by parts, we get
\begin{eqnarray}\label{Mink1-iju}
  \int_{M}\div_{M}(P_{r}\circ X_{n+1}^{T})&=&\int_{\partial M}P_{r}(X^{T}_{n+1},\mu)=\int_{\partial M}P_{r}^{\mu\mu}\bar{g}(X_{n+1},\mu)\\
  &=&\cos\theta\int_{\partial M}P_{r}^{\mu\mu}\bar{g}(X_{n+1},\bar \nu )=\cos\theta\int_{\partial M}P_{r}^{\mu\mu}\bar{g}(x,\bar \nu ),\nonumber
\end{eqnarray}
where we used \eqref{principal-direc}, \eqref{mu0}, \eqref{XXaeq2} and $\bar{g}(E_{n+1},\bar \nu )=0$ along $\partial M$.
 Combining \eqref{LHS-mink}, \eqref{Mink1-iju} and \eqref{boundary3}, we complete the proof.

\end{proof}

Denote the $r$-th Jacobi operator to be $J_{r}:=L_{r}+tr(P_{r}h^{2})-tr(P_{r})$, where $L_{r}$ is given by \eqref{Lr-operator}. Recall the conformal Killing vector $X_{n+1}=x-E_{n+1}$. In order to investigate stability of $(r+1)$-th capillary hypersurface by the above higher-order Minkowski-type formula \eqref{Mink1}. We next calculate differential equations for $\bar g (x, \nu)$, $\bar g(E_{n+1}, \nu)$, $\bar g(X_{n+1}, \nu)$ and $V_{n+1}$ as follows.
\begin{prop}\label{prop4.66} Let $x: M\to\hh^{n+1}$ be a constant $(r+1)$-th mean curvature hypersurface.  Then for any $0\leq r\leq n-1$,
	\begin{eqnarray}
	J_{r} \bar g(x, \nu)&=&0,\label{xmu}\\
	J_{r}\bar g(E_{n+1}, \nu)&=&-(r+1)\sigma_{r+1}V_{n+1}-(n-r)\sigma_{r}\bar g(E_{n+1},\nu),\label{Ennu}\\
	J_{r}\bar g(X_{n+1}, \nu)&=&(r+1)\sigma_{r+1}V_{n+1}+(n-r)\sigma_{r}\bar g(E_{n+1},\nu),\label{Xmu1}\\
	J_{r}V_{n+1}&=&(r+1)\sigma_{r+1}\bar{g}(E_{n+1},\nu)+(\sigma_{1}\sigma_{r+1}-(r+2)\sigma_{r+2})V_{n+1}.\label{JV}
\end{eqnarray}
\end{prop}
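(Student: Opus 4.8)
The plan is to prove the four identities separately, leaning on two shared computational inputs: the Gauss--Weingarten relations for the immersion (in the paper's convention $\bar\nabla_{e_j}\nu=h_{jk}e_k$, with matching Gauss formula $\bar\nabla_{e_i}e_k=\nabla_{e_i}e_k-h_{ik}\nu$), and the three structural facts \eqref{a1}, \eqref{dirEn}, \eqref{va2} governing the ambient behaviour of $x$, $E_{n+1}$ and $V_{n+1}$. Throughout I work at a point $p$ in a normal frame $\{e_i\}$ with $\nabla_{e_i}e_j(p)=0$, use that $\sigma_{r+1}$ is constant, and repeatedly invoke the divergence-free property and the trace formulas of Lemma \ref{Pr-prop} together with the relation $P_{r+1}=\sigma_{r+1}I-P_r\circ h$ from \eqref{Pr}. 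For \eqref{xmu} I would argue variationally rather than by hand: since $x$ is Killing by Proposition \ref{lem2.1}, its flow consists of ambient isometries, so the variation with normal speed $\varphi=\bar g(x,\nu)$ leaves every curvature function invariant and in particular $\partial_t\sigma_{r+1}=0$. Inserting $\varphi=\bar g(x,\nu)$ and tangential part $x^T$ into the pointwise evolution identity \eqref{sigeman} of Proposition \ref{formula111} and using $K=-1$ gives $J_r\bar g(x,\nu)=\nabla_{x^T}\sigma_{r+1}-\partial_t\sigma_{r+1}=0$, as $\sigma_{r+1}$ is constant.

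For \eqref{Ennu} I would compute directly. Differentiating $u=\bar g(E_{n+1},\nu)$ and using \eqref{dirEn}, so that the $\bar\nabla E_{n+1}$ contribution is normal and drops out, yields $\nabla_j u=h_{jk}\bar g(E_{n+1},e_k)$. Then $L_r u=(P_r^{ij}h_{jk}\bar g(E_{n+1},e_k))_{,i}$; substituting $P_r h=\sigma_{r+1}I-P_{r+1}$, using that $\sigma_{r+1}$ is constant and $P_{r+1}$ is divergence-free, and computing $\nabla_i\bar g(E_{n+1},e_k)=-V_{n+1}\delta_{ik}-h_{ik}u$ from \eqref{dirEn} and the Gauss formula, reduces everything to the traces $\mathrm{tr}(P_r)=(n-r)\sigma_r$, $\mathrm{tr}(P_{r+1})=(n-r-1)\sigma_{r+1}$ and $\mathrm{tr}(P_{r+1}h)=(r+2)\sigma_{r+2}$ of Lemma \ref{Pr-prop}. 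Adding the zeroth-order part of $J_r$, the combination $\sigma_1\sigma_{r+1}-(r+2)\sigma_{r+2}=\mathrm{tr}(P_rh^2)$ cancels exactly, leaving \eqref{Ennu}.

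Identity \eqref{Xmu1} then follows for free by linearity: since $X_{n+1}=x-E_{n+1}$ by \eqref{cfkill2}, one has $J_r\bar g(X_{n+1},\nu)=J_r\bar g(x,\nu)-J_r\bar g(E_{n+1},\nu)$, which is precisely minus the right-hand side of \eqref{Ennu}. For \eqref{JV} I would use the ambient Hessian identity $\bar\nabla^2 V_{n+1}=V_{n+1}\bar g$ from \eqref{va2}. Comparing intrinsic and ambient Hessians gives $\nabla^2_{ij}V_{n+1}=V_{n+1}\delta_{ij}-h_{ij}\,\nu(V_{n+1})$, and a short computation from \eqref{a1} and \eqref{dirEn}, equivalent to observing $\bar\nabla V_{n+1}=-E_{n+1}$, shows $\nu(V_{n+1})=-\bar g(E_{n+1},\nu)$. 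Since $P_r$ is divergence-free, $L_r V_{n+1}=P_r^{ij}\nabla^2_{ij}V_{n+1}=(n-r)\sigma_r V_{n+1}+(r+1)\sigma_{r+1}\bar g(E_{n+1},\nu)$ by the trace formulas; adding $(\mathrm{tr}(P_rh^2)-\mathrm{tr}(P_r))V_{n+1}$ cancels the $(n-r)\sigma_r V_{n+1}$ term and yields \eqref{JV}.

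I expect the only real obstacle to be bookkeeping rather than anything conceptual: pinning down the sign convention for $h$ and the matching Gauss formula, and then verifying that the first-order and zeroth-order contributions recombine through the Newton-tensor trace identities so that $\mathrm{tr}(P_rh^2)$ cancels in \eqref{Ennu} and $\mathrm{tr}(P_r)$ cancels in \eqref{JV}. The normal derivative $\nu(V_{n+1})=-\bar g(E_{n+1},\nu)$ is the single ambient computation where a sign slip would propagate into the final formula, so I would double-check it both from $\bar\nabla V_{n+1}=-E_{n+1}$ and from the alternative expression $V_{n+1}=\bar g(x,E_{n+1})$ as a consistency test.
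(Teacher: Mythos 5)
Your proof is correct. For \eqref{Ennu}, \eqref{Xmu1} and \eqref{JV} it coincides in substance with the paper's own computation: the paper likewise differentiates $\bar g(E_{n+1},\nu)$ using \eqref{a3} to arrive at $L_r\bar g(E_{n+1},\nu)=-\mathrm{tr}(P_rh)V_{n+1}-\mathrm{tr}(P_rh^2)\bar g(E_{n+1},\nu)$, obtains \eqref{Xmu1} by linearity from $X_{n+1}=x-E_{n+1}$, and derives \eqref{JV} from the ambient Hessian identity \eqref{va2} together with $\bar\nabla_\nu V_{n+1}=-\bar g(E_{n+1},\nu)$; your reorganization through $P_rh=\sigma_{r+1}I-P_{r+1}$ and the divergence-freeness of $P_{r+1}$ is exactly the bookkeeping the paper uses in its half-space analogue (Proposition \ref{prop4.66-half}) and is equivalent to the paper's implicit use of Codazzi, $P_r^{ij}\nabla_kh_{ij}=\nabla_k\sigma_{r+1}$. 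The genuine difference is \eqref{xmu}: the paper proves it by a direct second-derivative computation, commuting $\bar\nabla_{e_j}(\bar\nabla_\nu x)$ via the Killing equation \eqref{xKilling} and quoting the auxiliary identity \eqref{xmu3o9i}, whereas you observe that the flow of the Killing field $x$ (hyperbolic isometries; dilations in the half-space model) moves the hypersurface through congruent copies, so $\partial_t\sigma_{r+1}=0$ in the Lagrangian parametrization, and then the pointwise evolution identity \eqref{sigeman} (Proposition \ref{formula111}(8), which needs no boundary or admissibility hypotheses since it is purely local) with $f=\bar g(x,\nu)$, $T=x^T$, $K=-1$ and $\nabla_{x^T}\sigma_{r+1}=0$ yields $J_r\bar g(x,\nu)=0$ at once. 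Your route buys conceptual economy: it explains \emph{why} \eqref{xmu} holds (isometry invariance) and avoids the curvature commutation, at the cost of importing the evolution machinery of the appendix; the paper's computation stays self-contained in Section 5, and its intermediate identity \eqref{xmu3o9i} is reused there to prove the integral identity \eqref{boundary3}, so the direct approach earns its keep. One trivial wording slip to fix: in your derivation of $\nabla_ju=h_{jk}\bar g(E_{n+1},e_k)$, the term $\bar g(\bar\nabla_{e_j}E_{n+1},\nu)$ vanishes because $\bar\nabla_{e_j}E_{n+1}=-V_{n+1}e_j$ is \emph{tangential} (hence orthogonal to $\nu$), not ``normal'' as you wrote; your consistency check is otherwise sound, since indeed $\bar g(x,E_{n+1})=x_{n+1}^{-2}\langle x,E_{n+1}\rangle=x_{n+1}^{-1}=V_{n+1}$.
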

\begin{proof}
	It is clear that  \eqref{Xmu1}  follows from \eqref{xmu} and \eqref{Ennu}. Therefore, we only need to show \eqref{xmu}, \eqref{Ennu} and \eqref{JV} one by one.

For a fixed point $p\in M$ let $\{e_i\}_{i=1}^{n}$ at $p$ be the local orthonormal basis and $\nabla_{e_{i}}e_{j}|_{p}=0$.
By \eqref{xKilling}, we  calculate at $p$,
	\begin{eqnarray*}
	e_{i}\bar{g}(x,\nu)=\bar{g}(x,\bar{\nabla}_{e_{i}}\nu)+\bar{g}(\bar{\nabla}_{e_{i}}x,\nu)=\bar{g}(x,\bar{\nabla}_{e_{i}}\nu)-\bar{g}(\bar{\nabla}_{\nu}x, e_{i}).
	\end{eqnarray*}
	It follows that
	\begin{eqnarray}\label{xmu33}
	&&L_{r}\bar{g}(x,\nu)=P_{r}^{ij}\bar{g}(x,\nu)_{,ij}\\ &=&P_{r}^{ij}\left[\bar{g}(\bar{\nabla}_{e_{j}}x,\bar{\nabla}_{e_{i}}\nu)+\bar{g}(x,\bar{\nabla}_{e_{j}}(\bar{\nabla}_{e_{i}}\nu))-\bar{g}(\bar{\nabla}_{e_{j}}(\bar{\nabla}_{\nu}x), e_{i})-\bar{g}(\bar{\nabla}_{\nu}x, \bar{\nabla}_{e_{j}}e_{i})\right]\nonumber\\
	&=&P_{r}^{ij}[h_{ik}\bar{g}(\bar{\nabla}_{e_{j}}x, e_{k})+\bar{g}(x,\nabla h_{ij}-h^{2}_{ij}\nu)-\bar{g}(\bar{\nabla}_{e_{j}}(\bar{\nabla}_{\nu}x), e_{i})+h_{ij}\bar{g}(\bar{\nabla}_{\nu}x, \nu)]\nonumber\\
	&=&\bar{g}(x,\nabla\sigma_{r+1})-tr(P_{r}h^{2})\bar{g}(x,\nu)-P_{r}^{ij}\bar{g}(\bar{\nabla}_{e_{j}}(\bar{\nabla}_{\nu}x), e_{i}),\nonumber\\
	&=&-tr(P_{r}h^{2})\bar{g}(x,\nu)-P_{r}^{ij}\bar{g}(\bar{\nabla}_{e_{j}}(\bar{\nabla}_{\nu}x), e_{i}),\nonumber	
\end{eqnarray}
	where we have used \eqref{xKilling}  and  the fact $\sigma_{r+1}$ is constant.
		In \eqref{xmu3o9i}, we have proved that
	\begin{eqnarray}\label{xmu34'}
	&&-P_{r}^{ij}\bar{g}(\bar{\nabla}_{e_{j}}(\bar{\nabla}_{\nu}x), e_{i})=(n-r)\sigma_{r}\bar g(x, \nu).
	\end{eqnarray}
 \eqref{xmu} follows from  \eqref{xmu33} and \eqref{xmu34'}.
	
	Using \eqref{a3} and \eqref{En-Killing}, we can directly check that
	\begin{eqnarray}\label{xmuLr}
	\quad\quad\,\, L_{r}\bar{g}(E_{n+1},\nu)&=&P_{r}^{ij}\bar{g}(E_{n+1},\nu)_{,ij}
	=P_{r}^{ij}[e_{j}(\bar{g}(\bar{\nabla}_{e_{i}}E_{n+1},\nu)+\bar{g}(E_{n+1},\bar{\nabla}_{e_{i}}\nu))]\\
	&=&P_{r}^{ij}(e_{j}\bar{g}(E_{n+1},\bar{\nabla}_{e_{i}}\nu))
	=P_{r}^{ij}(\bar{g}(\bar{\nabla}_{e_{j}}E_{n+1}, \bar{\nabla}_{e_{i}}\nu)+\bar{g}(E_{n+1}, \bar{\nabla}_{e_{j}}(\bar{\nabla}_{e_{i}}\nu)))\nonumber\\
	&=&P_{r}^{ij}h_{ik}\bar{g}(\bar{\nabla}_{e_{j}}E_{n+1}, e_{k})+P_{r}^{ij}\bar{g}(E_{n+1}, \nabla h_{ij}-h^{2}_{ij}\nu)\nonumber\\
	&=&-tr(P_{r}h)V_{n+1}+\bar{g}(E_{n+1},\nabla\sigma_{r+1})-tr(P_{r}h^{2})\bar{g}(E_{n+1}, \nu),\nonumber\\
	&=&-tr(P_{r}h)V_{n+1}-tr(P_{r}h^{2})\bar{g}(E_{n+1}, \nu),\nonumber
\end{eqnarray}
	which implies \eqref{Ennu}. From Proposition \ref{xaa2}, we see that
	\begin{eqnarray}\label{Lrvn+1}
	L_{r}V_{n+1}&=&P_{r}^{ij}(V_{n+1})_{,ij}=P_{r}^{ij}(\bar{\nabla}_{ij}V_{n+1}-h_{ij}\bar{\nabla}_{\nu}V_{n+1})\\
&=&tr(P_{r})V_{n+1}+tr(P_{r}h)\bar{g}(E_{n+1},\nu),\nonumber
\end{eqnarray}
which obtains \eqref{JV}. The proof is finished.
\end{proof}

Now we compute the boundary equations of the corresponding geometric quantities.
\begin{prop}[{\rm \cite[Proposition 3.6]{GWX3}}]\label{prop-boundary} Let $x: M\to  \hh^{n+1}$ be an isometric immersion supported on $\mathcal{H}$. Assume  $x(M)$ meets $\mathcal{H}$ at a constant contact angle $\th \in (0, \pi)$. Then along $\p M$, we have
\begin{eqnarray}\label{Hyp-dddd1}
{\nabla}_{\mu}(V_{n+1}-\cos\theta\bar{g}(E_{n+1},\nu))&=&q(V_{n+1}-\cos\theta\bar{g}(E_{n+1},\nu)),\label{boundary-111}\\
{\nabla}_{\mu}\bar{g}(X_{n+1},\nu)&=&q\bar{g}(X_{n+1},\nu),\label{boundary-222}\\
{\nabla}_{\mu}\bar{g}(x,\nu)&=&\bar{g}(x, \bar \nu )+h(\mu, \mu)\bar{g}(x, \mu),\label{Hyp-dddd22}
\end{eqnarray}
where $q$ is defined by \eqref{qsdsdvb}.
\end{prop}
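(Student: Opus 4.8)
The plan is to differentiate each of the three quantities along the outward conormal $\mu$ at boundary points and to reduce everything to the frame decompositions \eqref{mu0}--\eqref{nubar} together with the ambient derivative formulas \eqref{a1}, \eqref{a3} and the Weingarten map. Throughout I use the convention $\bar{\nabla}_{e_i}\nu=\sum_k h(e_i,e_k)e_k$, so that the principal-direction property of Proposition \ref{Principle} gives the crucial simplification $\bar{\nabla}_\mu\nu=h(\mu,\mu)\mu$ (no off-diagonal terms). Before treating the three identities, I would record the boundary values that drive all of them. On $\mathcal{H}$ one has $\bar{E}_{n+1}=-\bar{N}$ and $x_{n+1}=1$, so $E_{n+1}=-V_{n+1}\bar{N}$ along $\partial M$; feeding this through \eqref{mu0}, \eqref{nu0}, \eqref{nubar} yields $\bar{g}(E_{n+1},\nu)=\cos\theta\,V_{n+1}$, $\bar{g}(E_{n+1},\mu)=-\sin\theta\,V_{n+1}$ and $\bar{g}(E_{n+1},\bar{\nu})=0$. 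A direct computation in the model \eqref{half-space} gives $\bar{\nabla}V_{n+1}=-E_{n+1}$ (consistent with Proposition \ref{xaa2}), hence $\nabla_{\mu}V_{n+1}=-\bar{g}(E_{n+1},\mu)=\sin\theta\,V_{n+1}$. Finally, the tangency $\bar{g}(X_{n+1},\bar{N})=0$ from Proposition \ref{xaa} combined with \eqref{nu0} and \eqref{nubar} forces $\bar{g}(X_{n+1},\mu)=\cot\theta\,\bar{g}(X_{n+1},\nu)$ and $\bar{g}(X_{n+1},\bar{\nu})=\csc\theta\,\bar{g}(X_{n+1},\nu)$.

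I would prove \eqref{Hyp-dddd22} first and directly. Writing $\nabla_{\mu}\bar{g}(x,\nu)=\bar{g}(\bar{\nabla}_{\mu}x,\nu)+\bar{g}(x,\bar{\nabla}_{\mu}\nu)$, the second term equals $h(\mu,\mu)\bar{g}(x,\mu)$ by the principal-direction identity. For the first term I substitute \eqref{a1}, which together with $\bar{E}_{n+1}=-\bar{N}$ on $\mathcal{H}$ gives $\bar{g}(\bar{\nabla}_{\mu}x,\nu)=\sin\theta\,\bar{g}(x,\nu)+\cos\theta\,\bar{g}(x,\mu)$; recognizing $\bar{g}(x,\bar{\nu})=\cos\theta\,\bar{g}(x,\mu)+\sin\theta\,\bar{g}(x,\nu)$ from \eqref{nubar}, the right-hand side of \eqref{Hyp-dddd22} appears exactly.

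Next, \eqref{boundary-111} follows by the same method applied to $\bar{g}(E_{n+1},\nu)$: using \eqref{a3} one has $\bar{\nabla}_{\mu}E_{n+1}=-V_{n+1}\mu$, so $\bar{g}(\bar{\nabla}_{\mu}E_{n+1},\nu)=0$ and $\nabla_{\mu}\bar{g}(E_{n+1},\nu)=h(\mu,\mu)\bar{g}(E_{n+1},\mu)=-\sin\theta\,V_{n+1}\,h(\mu,\mu)$. Combining this with $\nabla_{\mu}V_{n+1}=\sin\theta\,V_{n+1}$ and the boundary value $V_{n+1}-\cos\theta\,\bar{g}(E_{n+1},\nu)=\sin^2\theta\,V_{n+1}$, both sides of \eqref{boundary-111} collapse to $\sin\theta\,V_{n+1}\bigl(1+\cos\theta\,h(\mu,\mu)\bigr)$, where on the right-hand side I invoke the definition $q=\kappa\csc\theta+\cot\theta\,h(\mu,\mu)$ from \eqref{qsdsdvb} with $\kappa=1$. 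Finally, \eqref{boundary-222} is obtained by subtracting the $E_{n+1}$ computation from \eqref{Hyp-dddd22} via $X_{n+1}=x-E_{n+1}$, and then eliminating $\bar{g}(x,\mu)$ and $\bar{g}(x,\bar{\nu})$ in favor of $\bar{g}(X_{n+1},\bar{\nu})$ by means of $\bar{g}(E_{n+1},\bar{\nu})=0$ and $\bar{g}(E_{n+1},\mu)=-\sin\theta\,V_{n+1}$; after the cancellations the expression reduces to $\bar{g}(X_{n+1},\bar{\nu})+h(\mu,\mu)\bar{g}(X_{n+1},\mu)$, and the tangency relations turn this into $\bigl(\csc\theta+\cot\theta\,h(\mu,\mu)\bigr)\bar{g}(X_{n+1},\nu)=q\,\bar{g}(X_{n+1},\nu)$.

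I expect the main obstacle to be the assembly of \eqref{boundary-222}. Unlike $\bar{g}(x,\nu)$ itself --- which by \eqref{Hyp-dddd22} does \emph{not} satisfy a Robin-type relation --- the combination $\bar{g}(X_{n+1},\nu)$ must be shown to be a $q$-eigenfunction of the conormal derivative, and this hinges on a precise cancellation: the inhomogeneous term $-\sin\theta\,V_{n+1}\,h(\mu,\mu)$ produced by the $E_{n+1}$-part of $\bar{g}(x,\mu)$ must exactly annihilate the term $+\sin\theta\,V_{n+1}\,h(\mu,\mu)$ coming from $-\nabla_{\mu}\bar{g}(E_{n+1},\nu)$, after which the tangency $\bar{g}(X_{n+1},\bar{N})=0$ is what permits the collapse to a single multiple of $\bar{g}(X_{n+1},\nu)$. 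Keeping the frame bookkeeping consistent and confirming that the trigonometric factors recombine precisely into $q$ is where care is needed; the remaining manipulations are routine.
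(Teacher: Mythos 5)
Your proof is correct and takes essentially the same route as the paper's source: the paper defers this proposition to \cite[Proposition 3.6]{GWX3}, whose argument is precisely this direct conormal-derivative computation using $\bar\nabla_\mu\nu=h(\mu,\mu)\mu$ from Proposition \ref{Principle}, the ambient formulas \eqref{a1}, \eqref{a3} (equivalently $\bar\nabla V_{n+1}=-E_{n+1}$), and the boundary decompositions \eqref{mu0}--\eqref{nubar}. All your intermediate identities check out (e.g.\ $\bar g(E_{n+1},\nu)=\cos\theta\,V_{n+1}$, $\bar g(E_{n+1},\mu)=-\sin\theta\,V_{n+1}$ on $\partial M$, and the cancellation giving $\nabla_\mu\bar g(X_{n+1},\nu)=\bar g(X_{n+1},\bar\nu)+h(\mu,\mu)\bar g(X_{n+1},\mu)$ before invoking the tangency $\bar g(X_{n+1},\bar N)=0$), and they are consistent with the analogous boundary computations the paper performs elsewhere, such as \eqref{xeq1113} and \eqref{Phi-boun2}.
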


\subsection{Rigidity for stable $(r+1)$-th capillary hypersurfaces supported on a horosphere}\

In this subsection we will show a uniqueness result for stable $(r+1)$-th capillary hypersurfaces supported on a horosphere $\mathcal H$.
For notation simplicity, we denote
\begin{equation}\label{u-define}
  u:=V_{n+1}-\cos \theta \bar{g}(x,\nu).
\end{equation}
\begin{prop}\label{prop-3.1u}Assume $0\leq r\leq n-1$. Let $x: M\to\hh^{n+1}$ be a constant $(r+1)$-th mean curvature hypersurface with boundary supported on $\mathcal{H}$. Assume $x(M)$ intersects $\mathcal{H}$ at a constant contact angle $\th \in (0, \pi)$.
Then $u$ satisfies
\begin{eqnarray}
J_{r}u&=&(r+1)\sigma_{r+1}\bar{g}(E_{n+1},\nu)+(\sigma_{1}\sigma_{r+1}-(r+2)\sigma_{r+2})V_{n+1},\,\,\,\,\text{in}\,\, M.\label{Hyp-bdy1uu}\\
{\nabla}_{\mu}u&=&qu,\,\,\,\,\text{on}\,\, \partial M.\label{Hyp-bdy1uuty}
\end{eqnarray}
\end{prop}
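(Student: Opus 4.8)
The plan is to obtain both identities by assembling the interior and boundary formulas already established in Proposition \ref{prop4.66} and Proposition \ref{prop-boundary}, together with the linearity of the Jacobi operator $J_{r}$ and the constancy of $\cos\theta$; no new geometric computation is needed.

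For the interior equation \eqref{Hyp-bdy1uu}, I would begin from the definition $u=V_{n+1}-\cos\theta\,\bar g(x,\nu)$ and use linearity of $J_{r}$ to write $J_{r}u=J_{r}V_{n+1}-\cos\theta\,J_{r}\bar g(x,\nu)$. The key input is \eqref{xmu}, which gives $J_{r}\bar g(x,\nu)=0$, so the second term drops out entirely; then \eqref{JV} supplies $J_{r}V_{n+1}$, which is exactly the claimed right-hand side. Hence \eqref{Hyp-bdy1uu} follows at once, the only observation being that the $\bar g(x,\nu)$ contribution vanishes.

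For the boundary equation \eqref{Hyp-bdy1uuty}, the difficulty is that the boundary formulas in Proposition \ref{prop-boundary} are stated for the combinations $V_{n+1}-\cos\theta\,\bar g(E_{n+1},\nu)$ and $\bar g(X_{n+1},\nu)$, not directly for $u$. The crucial step is therefore to rewrite $u$ as a constant-coefficient linear combination of these two quantities. Using $X_{n+1}=x-E_{n+1}$, so that $\bar g(X_{n+1},\nu)=\bar g(x,\nu)-\bar g(E_{n+1},\nu)$, one verifies the algebraic identity
\[
u=\big(V_{n+1}-\cos\theta\,\bar g(E_{n+1},\nu)\big)-\cos\theta\,\bar g(X_{n+1},\nu).
\]
Applying \eqref{boundary-111} to the first bracket and \eqref{boundary-222} to the second, each piece satisfies $\nabla_{\mu}(\cdot)=q(\cdot)$ along $\partial M$; since $\cos\theta$ is a constant, linearity then yields $\nabla_{\mu}u=qu$.

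The proof poses essentially no analytic obstacle, since the substantive differential-geometric work is already contained in Propositions \ref{prop4.66} and \ref{prop-boundary}. The only point demanding care is the bookkeeping in the boundary step, namely identifying the correct decomposition of $u$ so that the two available boundary identities apply verbatim. I would verify the signs in the identity above by expanding $\bar g(X_{n+1},\nu)$, since a sign slip there is the most likely source of error.
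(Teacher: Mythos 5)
Your proposal is correct and matches the paper's proof, which likewise obtains \eqref{Hyp-bdy1uu} from Proposition \ref{prop4.66} (using $J_{r}\bar g(x,\nu)=0$ and the formula for $J_{r}V_{n+1}$) and \eqref{Hyp-bdy1uuty} from Proposition \ref{prop-boundary}. Your decomposition $u=\bigl(V_{n+1}-\cos\theta\,\bar g(E_{n+1},\nu)\bigr)-\cos\theta\,\bar g(X_{n+1},\nu)$ is exactly the bookkeeping the paper leaves implicit, and your signs check out.
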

\begin{proof}
	\eqref{Hyp-bdy1uu} and \eqref{Hyp-bdy1uuty} follow from Proposition \ref{prop4.66} and Proposition \ref{prop-boundary} respectively.
\end{proof}

\begin{prop}\label{thmm4.frwsss112}
Assume $0\leq r\leq n-1$. Let $x: M\to\hh^{n+1}$ be a $(r+1)$-th capillary hypersurface with boundary supported on $\mathcal{H}$ at a constant contact angle $\theta\in(0,\pi)$. If $M$ is stable and there exists at least an elliptic point, then $\int_{M}u\,dA\neq0$.
\end{prop}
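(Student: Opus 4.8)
The plan is to argue by contradiction, using $u$ itself as a test function. Suppose $\int_M u\,dA=0$. Since Proposition \ref{prop-3.1u} already supplies $\nabla_\mu u=qu$ on $\partial M$, the extra assumption $\int_M u\,dA=0$ places $u$ in the admissible class $\mathcal{F}$ of \eqref{stable1-pre}. Hence I may insert $\varphi=u$ into the stability inequality \eqref{stable1-half} with $K=-1$, obtaining
\[
\int_M u\,J_r u\,dA\le 0,\qquad J_r=L_r+\mathrm{tr}(P_rh^2)-\mathrm{tr}(P_r).
\]
The goal is then to show that in fact $\int_M u\,J_r u\,dA>0$, which is the desired contradiction. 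As a preliminary remark, the free boundary case $\theta=\pi/2$ is immediate: there $u=V_{n+1}=1/x_{n+1}>0$, so $\int_M u\,dA>0$ outright, and only $\theta\neq\pi/2$ requires genuine work.

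To evaluate the quadratic form I would substitute the explicit expression for $J_r u$ from Proposition \ref{prop-3.1u},
\[
J_r u=(r+1)\sigma_{r+1}\,\bar g(E_{n+1},\nu)+\big(\sigma_1\sigma_{r+1}-(r+2)\sigma_{r+2}\big)V_{n+1},
\]
rewriting the potential via Lemma \ref{Pr-prop}(4) as $\mathrm{tr}(P_rh^2)=\binom{n}{r+1}\big(nH_1H_{r+1}-(n-r-1)H_{r+2}\big)$, exactly the combination that appeared in the half-space computation of Section \ref{sec4}. Splitting $u=V_{n+1}-\cos\theta\,\bar g(x,\nu)$ and using $J_r\bar g(x,\nu)=0$ from Proposition \ref{prop4.66}, the pairing $\int_M\bar g(x,\nu)\,J_r u\,dA$ collapses after one integration by parts to a pure boundary integral, governed by the Robin identities $\nabla_\mu u=qu$ and \eqref{Hyp-dddd22} together with Proposition \ref{Principle} (so that only the $\mu\mu$-component of $P_r$ survives on $\partial M$). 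This isolates one manifestly positive bulk term, $\int_M\mathrm{tr}(P_rh^2)V_{n+1}^2\,dA>0$—positive because Proposition \ref{prop-elliptic} yields $P_r>0$ (so $\mathrm{tr}(P_rh^2)>0$) and $V_{n+1}>0$—together with the genuinely sign-indefinite term $(r+1)\sigma_{r+1}\int_M V_{n+1}\,\bar g(E_{n+1},\nu)\,dA$.

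The heart of the matter, and the step I expect to be the main obstacle, is controlling this indefinite term, since $\bar g(E_{n+1},\nu)=V_{n+1}\bar g(\bar E_{n+1},\nu)$ has no pointwise sign. I would handle it globally rather than pointwise, exploiting the special structure of $V_{n+1}$: from $\bar g=x_{n+1}^{-2}\delta$ one computes $\bar\nabla V_{n+1}=-E_{n+1}$, while Proposition \ref{xaa2} gives $\bar\nabla^2V_{n+1}=V_{n+1}\bar g$. Feeding these into a divergence identity for $P_r\circ(V_{n+1}\nabla V_{n+1})$ converts $(r+1)\sigma_{r+1}V_{n+1}\bar g(E_{n+1},\nu)$ into $V_{n+1}L_rV_{n+1}-(n-r)\sigma_rV_{n+1}^2$ modulo a boundary term; integrating and then invoking the Minkowski formula \eqref{Mink1} with the relevant indices repackages the whole expression in terms of the curvature integrals of $H_r,H_{r+1},H_{r+2}$. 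A Newton--MacLaurin inequality \eqref{Newtonaa}, in particular $H_1H_{r+1}\ge H_{r+2}$ (valid since the elliptic point forces $\kappa\in\Gamma_{r+2}^+$ through Proposition \ref{prop-elliptic}), then makes the reorganized integral strictly positive. Where strictness is not immediate, equality in \eqref{Newtonaa} would force $M$ to be totally umbilical; since a totally umbilical, non-totally-geodesic hypersurface has $\int_M u\,dA\neq 0$ (computable directly, e.g.\ by relating $\int_M u\,dA$ to $\int_M\bar g(X_{n+1},\nu)\,dA$ through \eqref{Mink1}), this residual case also contradicts $\int_M u\,dA=0$. Either way the bound $\int_M u\,J_r u\,dA\le 0$ is violated, so $\int_M u\,dA\neq 0$.

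Two points will need care. First, the boundary terms produced by the integrations by parts must be shown to cancel or to assemble into $\int_{\partial M}qP_r^{\mu\mu}(\cdots)\,ds$ via Proposition \ref{Principle} and the Robin identities of Proposition \ref{prop-boundary}; the angle-preserving constraint encoded in $\mathcal{F}$ is exactly what keeps these terms manageable and removes any net boundary contribution. Second, the bookkeeping that turns the $\bar g(E_{n+1},\nu)$ term into Newton--MacLaurin-controlled curvature integrals is the delicate part, and it is precisely here that the existence of an elliptic point—hence the positivity of $P_r$ and of every $H_j$ with $j\le r+1$ from Proposition \ref{prop-elliptic}—is indispensable, both for the estimate and for the strict inequality that closes the contradiction.
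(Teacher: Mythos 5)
Your opening coincides with the paper's proof: assume $\int_M u\,dA=0$, note $u\in\mathcal{F}$ by \eqref{Hyp-bdy1uuty}, insert $u$ into \eqref{stable1-half}, split $u=V_{n+1}-\cos\theta\,\bar g(x,\nu)$, and use $J_r\bar g(x,\nu)=0$ to collapse $\int_M\bar g(x,\nu)J_ru$ to the boundary integral $\int_{\partial M}P_r^{\mu\mu}u\left(\cot\theta+\csc\theta\, h(\mu,\mu)\right)ds$. The divergence comes at the decisive step, and there your argument has a genuine gap. The pointwise identity you invoke, $(r+1)\sigma_{r+1}V_{n+1}\bar g(E_{n+1},\nu)=V_{n+1}L_rV_{n+1}-(n-r)\sigma_rV_{n+1}^2$, is correct (it is exactly \eqref{Lrvn+1}), but integrating it is counterproductive: you get
\begin{equation*}
\int_M V_{n+1}J_ru=\int_M\left[\mathrm{tr}(P_rh^2)-(n-r)\sigma_r\right]V_{n+1}^2\,dA-\int_M P_r(\nabla V_{n+1},\nabla V_{n+1})\,dA+\int_{\partial M}V_{n+1}P_r^{\mu\mu}\nabla_\mu V_{n+1}\,ds,
\end{equation*}
in which \emph{both} new bulk terms have the wrong sign once $P_r>0$: the gradient term is nonpositive, and $\mathrm{tr}(P_rh^2)-(n-r)\sigma_r=\sigma_1\sigma_{r+1}-(r+2)\sigma_{r+2}-(n-r)\sigma_r$ is negative wherever the principal curvatures are small (the curvature-quadratic part is of order $|h|^{r+2}$ while $(n-r)\sigma_r>0$ is of order $|h|^{r}$), which is precisely the manifestation of the destabilizing potential $-\mathrm{tr}(P_r)$ in hyperbolic space. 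Moreover, the Minkowski formula \eqref{Mink1} is linear in the $\nu$-components and cannot repackage these $V_{n+1}^2$-weighted integrals into curvature integrals of $H_r,H_{r+1},H_{r+2}$, so the Newton--MacLaurin positivity you hope for never materializes.

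What the paper does instead---and what is missing from your proposal---is to introduce the auxiliary function $\Phi=-\bar g(E_{n+1},\nu)$ and add to the stability inequality the Dirichlet-type identity $\int_M[\Phi L_r\Phi+P_r(\nabla\Phi,\nabla\Phi)]=\int_{\partial M}\Phi P_r(\nabla\Phi,\mu)$ (see \eqref{auxiuu}--\eqref{xeq211uu}). Since $L_r\Phi=V_{n+1}\mathrm{tr}(P_rh)+\bar g(E_{n+1},\nu)\mathrm{tr}(P_rh^2)$, this cancels the indefinite cross term \emph{exactly}, converts $V_{n+1}^2-\bar g(E_{n+1},\nu)^2$ into $\bar g(E_{n+1}^T,E_{n+1}^T)\ge 0$, and contributes $P_r(\nabla\Phi,\nabla\Phi)$ with the correct sign, yielding $0\ge\int_M[\mathrm{tr}(P_rh^2)\,\bar g(E_{n+1}^T,E_{n+1}^T)+P_r(\nabla\Phi,\nabla\Phi)]$ after the boundary contributions are annihilated by the flux identity \eqref{integral-b}---an identity whose proof (Propositions \ref{prop-integral} and \ref{Proposition-x}, via the Killing field $x$ and the vector field $Z=\bar g(x,\nu)\bar E_{n+1}-\bar g(\bar E_{n+1},\nu)x$) is a substantive ingredient your sketch does not supply and which your vague ``boundary terms cancel'' does not replace. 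The endgame also differs from yours: the contradiction is not equality in Newton--MacLaurin forcing umbilicity, but rather that $\Phi$ must be constant, hence $L_r\Phi=0$, which together with $\bar g(E_{n+1},\nu)=\cos\theta$ and $V_{n+1}>0$ contradicts $\mathrm{tr}(P_rh^2)>0$ from \eqref{inequality-big}. Finally, a small but real misstatement: Proposition \ref{prop-elliptic} gives $H_j>0$ only for $j\le r+1$, i.e.\ $\kappa\in\Gamma_{r+1}^{+}$, not $\kappa\in\Gamma_{r+2}^{+}$ as you claim; the inequality $H_1H_{r+1}\ge H_{r+2}$ does hold in $\Gamma_{r+1}^{+}$, but not for the reason you cite.
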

\begin{proof}
By contradiction, if $\int_{M}u\,dA=0$. Combining with \eqref{Hyp-bdy1uuty} we know that $u\in\mathcal{F}$. Now we choose $u$ as an admissible test function in \eqref{stable1-half}.
Therefore, by \eqref{u-define}, we have
\begin{eqnarray}\label{xeq1111uu}
0 &\le &-\int_M uJ_{r}u=-\int_M (V_{n+1}-\cos\theta \bar{g}(x,\nu))J_{r}u\\
&=&-\int_M V_{n+1}J_{r}u\, +\cos\theta\int_{M}\bar{g}(x,\nu)J_{r}u.\nonumber
\end{eqnarray}
We compute the last term of \eqref{xeq1111uu} by Green's  formula. From \eqref{xmu}, \eqref{principal-direc} and \eqref{Hyp-bdy1uuty}, we obtain
\begin{eqnarray}\label{xeq11120uu}
\int_{M}\bar{g}(x,\nu)J_{r}u&=&\int_{M}uJ_{r}\bar{g}(x,\nu)+\int_{\partial M}[\bar{g}(x,\nu)P_{r}(\nabla u,\mu)-uP_{r}(\nabla\bar{g}(x,\nu),\mu)]\\
&=&\int_{\partial M}P_{r}^{\mu\mu}u(q\cdot\bar{g}(x,\nu)-\nabla_{\mu}\bar{g}(x,\nu)).\nonumber
\end{eqnarray}
By \eqref{Hyp-dddd22}, \eqref{nu0} and \eqref{Nbar}, we see along $\partial M$
\begin{eqnarray}\label{xeq1113uu}
q\cdot\bar{g}(x,\nu)-\nabla_{\mu}\bar{g}(x,\nu)&=&\left(\csc\theta+\cot\theta h(\mu,\mu)\right)\bar{g}(x,\nu)-(\bar{g}(x,\bar \nu )+h(\mu, \mu)\bar{g}(x,\mu))\\
&=&-\cot\theta \bar{g}(x,\bar{N})-{\csc\theta} \bar{g}(x,\bar{N})h(\mu,\mu)\nonumber\\
&=&\cot\theta+{\csc\theta}  h(\mu,\mu),\nonumber
\end{eqnarray}
in the last equality we have used $\bar{g}(x,\bar{N})=-1$ on $\partial M$.
Instituting \eqref{xeq11120uu}-\eqref{xeq1113uu} into \eqref{xeq1111uu}, we get
\begin{equation}\label{stability-2uu}
  \int_M V_{n+1}J_{r}u\, -\cos\theta\int_{\partial M}P_{r}^{\mu\mu}u(\cot\theta+{\csc\theta}  h(\mu,\mu))\leq0.
\end{equation}

Now we introduce an auxiliary function
\begin{equation}\label{auxiuu}
  \Phi:=-\bar{g}(E_{n+1},\nu).
\end{equation}
By \eqref{xmuLr}, we obtain
\begin{equation}\label{auxi2uu}
  L_{r}\Phi=-L_{r}\bar{g}(E_{n+1},\nu)=V_{n+1}tr(P_{r}h)+\bar{g}(E_{n+1},\nu)tr(P_{r}h^{2})
\end{equation}
Note from \eqref{nu0} and \eqref{dirEn} that
\begin{eqnarray}\label{Phi-boun1uu}
\Phi|_{\p M}=-\cos\theta \quad\text{and} \quad\nabla_{\mu}\Phi=\sin\theta h(\mu,\mu).
\end{eqnarray}
Inserting \eqref{auxi2uu}-\eqref{Phi-boun1uu} into the following identity
\begin{eqnarray*}
&&\int_{M}[\Phi L_{r}\Phi+P_{r}(\nabla \Phi,\nabla \Phi)]=\int_{M}\frac{1}{2}L_{r}\Phi^{2}=\int_{\partial M}\Phi P_{r}(\nabla\Phi,\mu),
\end{eqnarray*}
we get an integral identity
\begin{equation}\label{xeq211uu}
\int_{M}(-V_{n+1}tr(P_{r}h)-\bar{g}(E_{n+1},\nu)tr(P_{r}h^{2}))\bar{g}(E_{n+1},\nu)+\int_{M}P_{r}(\nabla \Phi,\nabla \Phi)=-\cos\theta\sin\theta\int_{\partial M}P_{r}^{\mu\mu}h(\mu,\mu).
\end{equation}
Here we used $\mu$ is a principal direction by \eqref{principal-direc}.

Using \eqref{nu0}, on $\partial M$ we have
\begin{equation}\label{X-inteuuuu}
  u=V_{n+1}-\cos\theta\bar{g}(x,\nu)=V_{n+1}-\cos\theta(\cos\theta+\sin\theta\bar{g}(x,\bar{\nu}))=\sin\theta(\sin\theta-\cos\theta\bar{g}(x,\bar{\nu})).
\end{equation}

By adding \eqref{xeq211uu} to \eqref{stability-2uu} and applying \eqref{X-inteuuuu} and \eqref{Hyp-bdy1uu} we get
\begin{eqnarray}\label{xeq2112-iii}
0&\ge &\int_{M}[\bar{g}(E_{n+1}^{T},E_{n+1}^{T})tr(P_{r}h^{2})+P_{r}(\nabla \Phi,\nabla \Phi)]\\
&&-\cos^{2}\theta\int_{\partial M}P_{r}^{\mu\mu}\left(\sin\theta-\cos\theta\bar{g}(x,\bar{\nu})-\bar{g}(x,\bar{\nu})h(\mu,\mu)\right).\nonumber\\
&=&\int_{M}\left[\bar{g}(E_{n+1}^{T},E_{n+1}^{T})tr(P_{r}h^{2})+P_{r}(\nabla \Phi,\nabla \Phi)\right], \nonumber
\end{eqnarray}
where the last equality we used \eqref{integral-b}.

Since there exists an elliptic point on $M$, we know that $H_{r+1}>0$. From Proposition \ref{prop-elliptic}, one see $L_{i}$ is elliptic and $H_{i}>0$ for each $i=1,2,\cdots, r$.

Thus by Lemma \ref{Newton-Mac} we have
\begin{eqnarray}\label{inequality-big}
tr(P_{r}h^{2})&=&\sigma_{1}\sigma_{r+1}-(r+2)\sigma_{r+2}\\
&=&(r+1)\binom{n}{r+1}H_{1}H_{r+1}+(n-r-1)\binom{n}{r+1}(H_{1}H_{r+1}-H_{r+2})\nonumber\\
&>&0.\nonumber
\end{eqnarray}
From \eqref{xeq2112-iii}, \eqref{inequality-big} and \eqref{auxi2uu}, we obtain $\Phi$ is a constant on $M$, i.e.,
\begin{equation}\label{dsdsduu99a}
L_{r}\Phi=(r+1)\sigma_{r+1}V_{n+1}+(\sigma_{1}\sigma_{r+1}-(r+2)\sigma_{r+2})\bar{g}(E_{n+1},\nu)=0.
\end{equation}
Since $M$ is a compact hypersurface with boundary supported on $\mathcal{H}$, we have
\begin{equation*}
-\Phi=\bar{g}(E_{n+1},\nu)=\cos\theta>0\quad\text{on}\,\,M.
\end{equation*}
It follows from \eqref{dsdsduu99a} that
\begin{eqnarray*}
\sigma_{1}\sigma_{r+1}-(r+2)\sigma_{r+2}<0.
\end{eqnarray*}
We get a contradiction by \eqref{inequality-big}. Therefore, we conclude that $\int_{M}u\,dA\neq0$.
\end{proof}

In the following part we are ready to prove the classification for stable $(r+1)$-th capillary hypersurfaces supported on a horosphere $\mathcal{H}$. Inspired by the higher-order Minkowski-type formula \eqref{Mink1} and Proposition \ref{thmm4.frwsss112}, we have an admissible test function defined by
\begin{equation}\label{test-function}
\varphi_{n+1}:=\lambda u-\bar{g}(X_{n+1},\nu)H_{r+1},
\end{equation}
where $\lambda:=(\int_{M}u\,dA)^{-1}\int_{M}uH_{r}\,dA$ is constant and $u$ is given by \eqref{u-define}.

For the convenience, we denote
\begin{eqnarray}
\phi:&=&\lambda(r+1)\sigma_{r+1}-(n-r)\sigma_{r}H_{r+1},\label{htrr}\\
\psi:&=&\lambda (\sigma_{1}\sigma_{r+1}-(r+2)\sigma_{r+2})-(r+1)\sigma_{r+1}H_{r+1}.\label{hrr1}
\end{eqnarray}
In particular, if $r=0$ we have $\lambda=1$ and $\phi=0$ and $\psi=|h|^{2}-nH^{2}_{1}$.
\begin{prop}\label{prop-3.1}Let $x: M\to\hh^{n+1}$ be a constant $(r+1)$-th mean curvature hypersurface with boundary supported on $\mathcal{H}$. Assume $x(M)$ intersects $\mathcal{H}$ at a constant contact angle $\th \in (0, \pi)$.
Then $\varphi_{n+1}$ satisfies
\begin{eqnarray}
 J_{r}\varphi_{n+1}&=& \phi\bar{g}(E_{n+1},\nu)+\psi V_{n+1},\label{varphi1}\\
	{\nabla}_{\mu}\varphi_{n+1}&=&q\varphi_{n+1},\label{Hyp-bdy1}\\
\int_{M}\varphi_{n+1}\,dA&=&0.\label{bdy-zero1}
\end{eqnarray}
\end{prop}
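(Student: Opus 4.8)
The plan is to verify the three assertions \eqref{varphi1}, \eqref{Hyp-bdy1}, \eqref{bdy-zero1} in turn, assembling them from the identities already established in Propositions \ref{prop4.66}, \ref{prop-boundary}, \ref{prop-3.1u}, the Minkowski-type formula \eqref{Mink1}, and Proposition \ref{thmm4.frwsss112}. Since $\varphi_{n+1}=\lambda u - \bar g(X_{n+1},\nu)H_{r+1}$ is a fixed linear combination of $u$ and $\bar g(X_{n+1},\nu)$ with constant coefficients (recall $H_{r+1}$ is constant and $\lambda$ is a constant by Proposition \ref{thmm4.frwsss112}, which guarantees $\int_M u\,dA\neq 0$), each of the three statements should follow by linearity from the corresponding statements for $u$ and for $\bar g(X_{n+1},\nu)$ separately.

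For \eqref{varphi1}, I would apply $J_r$ to each summand. By \eqref{Hyp-bdy1uu} we already have
\begin{equation*}
J_r u = (r+1)\sigma_{r+1}\bar g(E_{n+1},\nu)+(\sigma_1\sigma_{r+1}-(r+2)\sigma_{r+2})V_{n+1},
\end{equation*}
and by \eqref{Xmu1} we have
\begin{equation*}
J_r\bar g(X_{n+1},\nu)=(r+1)\sigma_{r+1}V_{n+1}+(n-r)\sigma_r\bar g(E_{n+1},\nu).
\end{equation*}
Multiplying the first by $\lambda$ and the second by $-H_{r+1}$ and collecting the coefficients of $\bar g(E_{n+1},\nu)$ and of $V_{n+1}$ should reproduce exactly $\phi\,\bar g(E_{n+1},\nu)+\psi\,V_{n+1}$ with $\phi,\psi$ as defined in \eqref{htrr}, \eqref{hrr1}; this is a routine matching of terms.

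For \eqref{Hyp-bdy1}, the boundary derivative $\nabla_\mu$ is applied to each piece. From \eqref{Hyp-bdy1uuty} we have $\nabla_\mu u=qu$, and from \eqref{boundary-222} in Proposition \ref{prop-boundary} we have $\nabla_\mu\bar g(X_{n+1},\nu)=q\,\bar g(X_{n+1},\nu)$. Since both $u$ and $\bar g(X_{n+1},\nu)$ are eigenfunctions of $\nabla_\mu$ with the same factor $q$, their constant-coefficient combination $\varphi_{n+1}$ inherits $\nabla_\mu\varphi_{n+1}=q\varphi_{n+1}$ immediately. For \eqref{bdy-zero1}, integrating the defining expression \eqref{test-function} gives $\int_M\varphi_{n+1}\,dA=\lambda\int_M u\,dA-H_{r+1}\int_M\bar g(X_{n+1},\nu)\,dA$; substituting the definition $\lambda=(\int_M u\,dA)^{-1}\int_M u H_r\,dA$ turns this into $\int_M u H_r\,dA - H_{r+1}\int_M\bar g(X_{n+1},\nu)\,dA$, which is precisely the Minkowski-type formula \eqref{Mink1} rewritten (recall $u=V_{n+1}-\cos\theta\,\bar g(x,\nu)$), hence vanishes.

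I do not expect a genuine obstacle here: the proposition is a bookkeeping step that packages the earlier computational lemmas into a single admissible test function $\varphi_{n+1}\in\mathcal F$. The only point requiring care is that the argument tacitly uses $\int_M u\,dA\neq 0$ so that $\lambda$ is well defined, which is exactly the content of Proposition \ref{thmm4.frwsss112} under the standing hypothesis of an elliptic point; the mildly delicate part is simply keeping the algebraic coefficients straight when matching $J_r\varphi_{n+1}$ against $\phi\,\bar g(E_{n+1},\nu)+\psi\,V_{n+1}$.
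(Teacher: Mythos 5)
Your proposal is correct and follows essentially the same route as the paper: the paper's proof likewise obtains \eqref{varphi1} from \eqref{Hyp-bdy1uu} and \eqref{Xmu1}, \eqref{Hyp-bdy1} from \eqref{Hyp-bdy1uuty} and \eqref{boundary-222}, and \eqref{bdy-zero1} from the Minkowski-type formula \eqref{Mink1} together with the definition of $\lambda$, with your coefficient matching for $\phi$ and $\psi$ checking out exactly against \eqref{htrr} and \eqref{hrr1}. Your side remark on $\int_M u\,dA\neq 0$ is apt: the proposition as stated tacitly assumes $\lambda$ is well defined, and the paper only invokes it after Proposition \ref{thmm4.frwsss112} has secured this in the stable setting.
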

\begin{proof}
	\eqref{varphi1} follows from \eqref{Hyp-bdy1uu} and \eqref{Xmu1}. \eqref{Hyp-bdy1} from \eqref{Hyp-bdy1uuty} and \eqref{boundary-222}. \eqref{bdy-zero1} is exactly from \eqref{Mink1} and the definition of $\lambda$.
\end{proof}

Now we are going to prove the rigidity result for stable $(r+1)$-th capillary hypersurfaces with boundary supported on a horosphere in $\mathbb{H}^{n+1}$ as follows.
\begin{theorem}\label{thmm4.1phi}
Assume $0\leq r\leq n-1$. Let $x: M\to\hh^{n+1}$ be a $(r+1)$-th capillary hypersurface supported on $\mathcal{H}$ at a constant contact angle $\theta\in(0,\pi)$. If $M$ is stable and there exists at least an elliptic point, then $M$ is totally umbilical.
\end{theorem}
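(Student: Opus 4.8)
The plan is to use the explicitly constructed test function $\varphi_{n+1}$ from \eqref{test-function} in the stability inequality \eqref{stable1-half}, exactly as in the half-space argument of Theorem \ref{thmm4.1}, and to extract umbilicity from the equality case of the Newton--MacLaurin inequalities. First I would record that the hypotheses make everything well defined: since $M$ carries an elliptic point and $H_{r+1}$ is a positive constant, Proposition \ref{prop-elliptic} guarantees that each $L_{i}$ (for $0\le i\le r$) is elliptic and each $H_{i}>0$, while Proposition \ref{thmm4.frwsss112} gives $\int_{M}u\,dA\neq 0$, so the constant $\lambda=(\int_{M}u\,dA)^{-1}\int_{M}uH_{r}\,dA$ appearing in \eqref{test-function} is meaningful. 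By Proposition \ref{prop-3.1} the function $\varphi_{n+1}$ has zero mean and satisfies the Robin condition $\nabla_{\mu}\varphi_{n+1}=q\varphi_{n+1}$, hence $\varphi_{n+1}\in\mathcal F$ and is admissible in \eqref{stable1-half}.

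Next I would feed $\varphi_{n+1}$ into the stability inequality. With $K=-1$ and $J_{r}=L_{r}+\mathrm{tr}(P_{r}h^{2})-\mathrm{tr}(P_{r})$, stability reads $0\le -\int_{M}\varphi_{n+1}J_{r}\varphi_{n+1}\,dA$, and \eqref{varphi1} turns the right-hand side into $-\int_{M}\varphi_{n+1}\big(\phi\,\bar g(E_{n+1},\nu)+\psi V_{n+1}\big)\,dA$ with $\phi,\psi$ as in \eqref{htrr}--\eqref{hrr1}. I would then expand $\varphi_{n+1}=\lambda u-H_{r+1}\bar g(X_{n+1},\nu)$, rewrite $(r+1)\sigma_{r+1}=(n-r)\binom{n}{r}H_{r+1}$ and $\mathrm{tr}(P_{r}h^{2})=\sigma_{1}\sigma_{r+1}-(r+2)\sigma_{r+2}$, and use the Minkowski-type formula \eqref{Mink1} together with the integral identities of Propositions \ref{prop-integral} and \ref{Proposition-x} and the boundary identity \eqref{integral-b} to discard the boundary contributions and the terms that, after integration, are proportional to the constant $H_{r+1}$.

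The analytic heart is to reassemble the remaining bulk integral as a sum of manifestly nonnegative quantities. Here the hyperbolic case is genuinely harder than the half-space one: in Theorem \ref{thmm4.1} the quantity $J_{r}\varphi$ collapses to a single scalar, whereas now $J_{r}\varphi_{n+1}$ retains the two non-constant ambient functions $\bar g(E_{n+1},\nu)$ and $V_{n+1}$, producing genuine cross terms. I expect to control these exactly as in Proposition \ref{thmm4.frwsss112}, by introducing the auxiliary function $\Phi=-\bar g(E_{n+1},\nu)$, invoking its integral identity \eqref{xeq211uu} (which comes from the conformal-Killing structure of Propositions \ref{xaa}--\ref{xaa2}), and applying the Newton--MacLaurin bound \eqref{inequality-big} to see that $\mathrm{tr}(P_{r}h^{2})>0$ and that $P_{r}$ is positive definite. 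Combined with a H\"older estimate applied to the Minkowski relation $\int_{M}uH_{r}\,dA=H_{r+1}\int_{M}\bar g(X_{n+1},\nu)\,dA$ in the manner of \eqref{xeqqqq1qqa}--\eqref{xewewe}, this should force the differences $H_{1}H_{r+1}-H_{r+2}$ and the gradient term $P_{r}(\nabla\Phi,\nabla\Phi)$ to vanish.

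Finally, the vanishing of these sign-definite terms is precisely the equality case of the Newton--MacLaurin inequalities in Lemma \ref{Newton-Mac}, which holds only when $\kappa_{1}=\cdots=\kappa_{n}$; hence $M$ is totally umbilical. The step I anticipate to be the main obstacle is the bookkeeping in the third paragraph: correctly tracking the cross terms coming from $\bar g(E_{n+1},\nu)$ and $V_{n+1}$, and verifying that after using all the available identities the leftover integrand is a nonnegative combination of Newton--MacLaurin differences with positive weights, with no residual indefinite term surviving.
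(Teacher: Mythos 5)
Your skeleton matches the paper's strategy (test function $\varphi_{n+1}$, Green's formula against $\bar g(x,\nu)$ using $J_r\bar g(x,\nu)=0$, an auxiliary-function identity to kill cross terms, boundary cancellation via \eqref{integral-b}, Newton--MacLaurin at the end), but there are two genuine gaps in the middle where you defer to the wrong tools. First, the auxiliary function: you propose to reuse $\Phi=-\bar g(E_{n+1},\nu)$ and its identity \eqref{xeq211uu} from Proposition \ref{thmm4.frwsss112}. That choice works there because the test function is $u$ itself and the surviving pairing is with $V_{n+1}$ alone; for $\varphi_{n+1}$, after the Green's formula step the stability inequality pairs $J_r\varphi_{n+1}=\phi\,\bar g(E_{n+1},\nu)+\psi V_{n+1}$ against $\lambda V_{n+1}+H_{r+1}\bar g(E_{n+1},\nu)$, and the cross terms only collapse if the auxiliary function is the \emph{tuned combination} $\Psi=-H_{r+1}V_{n+1}-\lambda\,\bar g(E_{n+1},\nu)$, for which $L_r\Psi=\phi V_{n+1}+\psi\,\bar g(E_{n+1},\nu)$ (note the swap of $\phi$ and $\psi$). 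Adding that identity to the stability inequality produces exactly $(\psi\lambda-\phi H_{r+1})\,\bar g(E_{n+1}^{T},E_{n+1}^{T})+P_r(\nabla\Psi,\nabla\Psi)$; with your $\Phi$ the indefinite cross terms $V_{n+1}\bar g(E_{n+1},\nu)$ do not cancel (indeed $\phi\neq0$ in general for $r\geq1$, unlike the $r=0$ case where $\phi=0$, $\lambda=1$). Second, the H\"older step you import from \eqref{xeqqqq1qqa}--\eqref{xewewe} is not how positivity is obtained here and would not suffice: the coefficient $\psi\lambda-\phi H_{r+1}$ multiplies the varying weight $\bar g(E_{n+1}^{T},E_{n+1}^{T})$, so an integral bound is useless; the paper instead proves the \emph{pointwise} inequality by completing the square in $\lambda$,
\begin{equation*}
\psi\lambda-\phi H_{r+1}=\lambda^{2}(n-r-1)\binom{n}{r+1}(H_{1}H_{r+1}-H_{r+2})+(r+1)\binom{n}{r+1}H_{r+1}^{2}\left[\frac{H_{1}}{H_{r+1}}\left(\lambda-\frac{H_{r+1}}{H_{1}}\right)^{2}+H_{r}-\frac{H_{r+1}}{H_{1}}\right]\geq0,
\end{equation*}
which holds for \emph{any} constant $\lambda$ by two Newton--MacLaurin inequalities.

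There is also a gap at the end: stability only forces the vanishing of the \emph{product} $\bar g(E_{n+1}^{T},E_{n+1}^{T})(\psi\lambda-\phi H_{r+1})$ together with $P_r(\nabla\Psi,\nabla\Psi)=0$, and the product can vanish where $E_{n+1}^{T}=0$ without any umbilicity there. So you cannot pass directly to the equality case of Newton--MacLaurin. The paper's claim \eqref{sxxa1112} requires an additional continuation argument: on the open set $U$ where $\psi\lambda-\phi H_{r+1}\neq0$ one has $\bar g(E_{n+1},\nu)=\pm V_{n+1}$, and the constancy of $\Psi$ (from $P_r(\nabla\Psi,\nabla\Psi)=0$ and positive definiteness of $P_r$) then forces $V_{n+1}$ constant on $U$, i.e. $U$ lies on a horosphere; combined with umbilicity of $U^{c}$ and smoothness of $M$ this yields a contradiction unless $U=\emptyset$. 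Only then does the equality case of Lemma \ref{Newton-Mac} apply on all of $M$. Your proposal correctly identifies the cross-term bookkeeping as the main obstacle, but the tuned function $\Psi$, the pointwise quadratic-in-$\lambda$ positivity, and the open-set dichotomy are precisely the missing ingredients, and each would need to be supplied to make the argument close.
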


\begin{proof} From Proposition \ref{thmm4.frwsss112}, we know that $\int_{M}u\,dA\neq0$. Thus by \eqref{Hyp-bdy1} and \eqref{bdy-zero1}, we can choose $\varphi_{n+1}$ as an admissible test function in \eqref{stable1-half}.
Therefore,
\begin{eqnarray}\label{xeq1111}
\qquad\,\,0 &\le &-\int_M\varphi_{n+1}J \varphi_{n+1}=-\int_M (\lambda V_{n+1}-\lambda\cos\theta \bar{g}(x,\nu)-\bar{g}(x-E_{n+1},\nu)H_{r+1})J_{r}\varphi_{n+1}\,  \\
&=&-\int_M (\lambda V_{n+1}+\bar{g}(E_{n+1},\nu)H_{r+1})J_{r}\varphi_{n+1}\, +(H_{r+1}+\lambda\cos\theta)\int_{M}\bar{g}(x,\nu)J_{r}\varphi_{n+1}.\nonumber
\end{eqnarray}
We compute the second term in the right hand side of \eqref{xeq1111} by Green's  formula. By \eqref{xmu} and \eqref{Hyp-bdy1}, we have
\begin{eqnarray}\label{xeq11120}
&&\int_{M}\bar{g}(x,\nu)J_{r}\varphi_{n+1}\\
&=&\int_{M}J_{r}\bar{g}(x,\nu)\varphi_{n+1}+\int_{\partial M}[\bar{g}(x,\nu)P_{r}(\nabla\varphi_{n+1},\mu)-\varphi_{n+1}P_{r}(\nabla\bar{g}(x,\nu),\mu)]\nonumber\\
&=&\int_{\partial M}P_{r}^{\mu\mu}(\bar{g}(x,\nu)\nabla_{\mu}\varphi_{n+1}-\varphi_{n+1}\nabla_{\mu}\bar{g}(x,\nu))\nonumber\\
&=&\int_{\partial M}P_{r}^{\mu\mu}\varphi_{n+1}(q\cdot\bar{g}(x,\nu)-\nabla_{\mu}\bar{g}(x,\nu)).\nonumber
\end{eqnarray}
From \eqref{Hyp-dddd22}, \eqref{nu0} and \eqref{Nbar}, we find along $\partial M$
\begin{eqnarray}\label{xeq1113}
q\cdot\bar{g}(x,\nu)-\nabla_{\mu}\bar{g}(x,\nu)&=&\left(\csc\theta+\cot\theta h(\mu,\mu)\right)\bar{g}(x,\nu)-(\bar{g}(x,\bar \nu )+h(\mu, \mu)\bar{g}(x,\mu))\\
&=&-\cot\theta \bar{g}(x,\bar{N})-{\csc\theta} \bar{g}(x,\bar{N})h(\mu,\mu)\nonumber\\
&=&\cot\theta+{\csc\theta}  h(\mu,\mu),\nonumber
\end{eqnarray}
where we used $\bar{g}(x,\bar{N})=-1$ on $\partial M$.

Instituting \eqref{xeq11120} and \eqref{xeq1113} into \eqref{xeq1111}, we get
\begin{equation}\label{stability-2}
  \int_M (\lambda V_{n+1}+\bar{g}(E_{n+1},\nu)H_{r+1})J_{r}\varphi_{n+1}\, -(H_{r+1}+\lambda\cos\theta)\int_{\partial M}P_{r}^{\mu\mu}\varphi_{n+1}(\cot\theta+{\csc\theta}  h(\mu,\mu))\leq0.
\end{equation}

Next we introduce a powerful auxiliary function to eliminate the integral boundary term of \eqref{stability-2}. Let
\begin{equation}\label{auxi}
  \Psi:=-H_{r+1}V_{n+1}-\lambda\bar{g}(E_{n+1},\nu).
\end{equation}
By \eqref{Lrvn+1} and \eqref{xmuLr}, we obtain
\begin{equation}\label{auxi2}
 L_{r}\Psi=\phi V_{n+1}+\psi\bar{g}(E_{n+1},\nu).
\end{equation}
From \eqref{nu0} we have
\begin{eqnarray}\label{Phi-boun1}
\Psi|_{\p M}=-H_{r+1}-\lambda\cos\theta.
\end{eqnarray}
By using \eqref{dirEn}, we can directly calculate
\begin{eqnarray}\label{Phi-boun2}
&&  \nabla_{\mu}\Psi=-\sin\theta(H_{r+1}-\lambda h(\mu,\mu)),
\end{eqnarray}
where we have used $\bar{g}(\bar{\nabla}_{\mu}E_{n+1},\nu)=0$ and $\bar{g}(E_{n+1},\mu)=-\sin\theta$ on $\partial M$.

Inserting \eqref{auxi}-\eqref{Phi-boun2} into the following integral identity
\begin{eqnarray*}
&&\int_{M}[\Psi L_{r}\Psi+P_{r}(\nabla\Psi,\nabla\Psi)]=\int_{M}\frac{1}{2}L_{r}\Psi^{2}=\int_{\partial M}\Psi P_{r}(\nabla\Psi,\mu),
\end{eqnarray*}
we get that
\begin{eqnarray}\label{xeq211}
&&\int_{M}(-H_{r+1}V_{n+1}-\lambda\bar{g}(E_{n+1},\nu))(\phi V_{n+1}+\psi\bar{g}(E_{n+1},\nu))+\int_{M}P_{r}(\nabla\Psi,\nabla\Psi)\\
&=&(H_{r+1}+\lambda\cos\theta)\sin\theta\int_{\partial M}P_{r}^{\mu\mu}(H_{r+1}-\lambda h(\mu,\mu)).\nonumber
\end{eqnarray}
Putting \eqref{xeq211} into \eqref{stability-2} and applying \eqref{varphi1} we have
\begin{eqnarray}\label{xeq2112}
\quad\quad\,\,0&\ge &\int_{M}\bar{g}(E_{n+1}^{T},E_{n+1}^{T})(\psi\lambda-\phi H_{r+1})+\int_{M}P_{r}(\nabla\Psi,\nabla\Psi)\\
&&-(H_{r+1}+\lambda\cos\theta)\int_{\partial M}P_{r}^{\mu\mu}\left[\sin\theta(H_{r+1}-\lambda h(\mu,\mu))+(\cot\theta+{\csc\theta}h(\mu,\mu))\varphi_{n+1}\right].\nonumber
\end{eqnarray}

Next we will show that the boundary term in the right hand side of \eqref{xeq2112} is zero. Indeed, by \eqref{nu0} and \eqref{XXaeq2} along $\partial M$
\begin{eqnarray*}
\varphi_{n+1}&=&\lambda V_{n+1}-\lambda\cos\theta \bar{g}(x,\nu)-\bar{g}(X_{n+1},\nu)H_{r+1}\\
&=&\lambda-\lambda\cos\theta(\cos\theta+\sin\theta\bar{g}(x,\bar{\nu}))-\sin\theta\bar{g}(x-E_{n+1},\bar{\nu})H_{r+1}\nonumber\\
&=&\sin\theta(\lambda\sin\theta-\lambda\cos\theta\bar{g}(x,\bar{\nu})-\bar{g}(x,\bar{\nu})H_{r+1}).\nonumber
\end{eqnarray*}
It follows that
\begin{eqnarray}\label{xeq2133uuueddw}
&&\sin\theta(H_{r+1}-\lambda h(\mu,\mu))+(\cot\theta+{\csc\theta}h(\mu,\mu))\varphi_{n+1}\\
&&=(H_{r+1}+\lambda\cos\theta)(\sin\theta-\cos\theta\bar{g}(x,\bar{\nu})-\bar{g}(x,\bar{\nu})h(\mu,\mu)).\nonumber
\end{eqnarray}
From \eqref{xeq2133uuueddw} and \eqref{integral-b}, we see that
\begin{equation}\label{kkij}
\int_{\partial M}P_{r}^{\mu\mu}\left[\sin\theta(H_{r+1}-\lambda h(\mu,\mu))+(\cot\theta+{\csc\theta}h(\mu,\mu))\varphi_{n+1}\right]=0.
\end{equation}
Putting \eqref{kkij} into \eqref{xeq2112}, we get
\begin{equation}\label{xeq2112uuuuy}
  0\ge\int_{M}[\bar{g}(E_{n+1}^{T},E_{n+1}^{T})(\psi\lambda-\phi H_{r+1})+P_{r}(\nabla \Psi,\nabla\Psi)].
\end{equation}

Since there exists an elliptic point on $M$, thus $H_{r+1}=constant>0$. From Proposition \ref{prop-elliptic}, the operator $L_{i}$ is elliptic and $H_{i}>0$ for each $i=1,2,\cdots, r$.

By the Newton-Maclaurin inequality \eqref{Newtonaa} we have
\begin{eqnarray}\label{xeq2112uinter}
\quad\psi\lambda-\phi H_{r+1}&=&\lambda^{2}(\sigma_{1}\sigma_{r+1}-(r+2)\sigma_{r+2})-2\lambda(r+1)\sigma_{r+1}H_{r+1}+(n-r)\sigma_{r}H_{r+1}^{2}\\
&=&\binom{n}{r+1}[\lambda^{2}(n-r-1)(H_{1}H_{r+1}-H_{r+2})+(r+1)\lambda^{2}H_{1}H_{r+1}]\nonumber\\
&&+\binom{n}{r+1}(r+1)[-2\lambda H_{r+1}^{2}+H_{r}H_{r+1}^{2}]\nonumber\\
&=&\lambda^{2}(n-r-1)\binom{n}{r+1}(H_{1}H_{r+1}-H_{r+2})\nonumber\\
&&+(r+1)\binom{n}{r+1}H_{r+1}^{2}\left(\frac{H_{1}}{H_{r+1}}\left(\lambda-\frac{H_{r+1}}{H_{1}}\right)^{2}+H_{r}-\frac{H_{r+1}}{H_{1}}\right)\nonumber\\
&\geq&0.\nonumber
\end{eqnarray}
Combining \eqref{xeq2112uinter} and \eqref{xeq2112uuuuy}, we obtain
\begin{eqnarray}\label{conc-1}
\bar{g}(E_{n+1}^{T}, E_{n+1}^{T})(\psi\lambda-\phi H_{r+1})=0,\quad\text{on}\,\, M,
\end{eqnarray}
and $\Psi$ is a constant on $M$, that is,
\begin{eqnarray}\label{conc-sss}
\Psi=-H_{r+1}V_{n+1}-\lambda\bar{g}(E_{n+1},\nu)=-H_{r+1}-\lambda\cos\theta,  \quad\text{on}\,\, M.
\end{eqnarray}

We claim that
\begin{equation}\label{sxxa1112}
  \psi\lambda-\phi H_{r+1}=0, \quad\text{on}\,\, M.
\end{equation}
In fact, the open set $U:=\{p\in M|\psi\lambda-\phi H_{r+1}\neq0\}$ is empty. If not, we have $\bar{g}(E_{n+1}^{T}, E_{n+1}^{T})=0$ on $U$ from \eqref{conc-1}. By the fact that $\bar{g}(\bar{E}_{n+1},\bar{E}_{n+1})=1$ we see that
\begin{equation}\label{jaaaaxqqqw}
\bar{g}(E_{n+1}, \nu)=\pm V_{n+1}, \quad\text{on}\,\, U.
\end{equation}
Since $\theta\in(0,\pi)$ and $H_{r+1}>0$, we combine \eqref{jaaaaxqqqw} and \eqref{conc-sss} to get $V_{n+1}$ is positive constant $c_{1}$ on $U$,
which means $U$ is lying on the horosphere $\{V_{n+1}=c_{1}\}$. On the other hand, from \eqref{xeq2112uinter} we know that $U^{c}$ is a part of the totally umbilical hypersurface. By the smoothness of $M$, we imply that $V_{n+1}$ is constant on the whole $M$. Thus $M$ lies on a horosphere in $\mathbb{H}^{n+1}$. Using \eqref{xeq2112uinter} again we have
\begin{equation*}
\psi\lambda-\phi H_{r+1}=0, \quad\text{on}\,\, M.
\end{equation*}
We get a contradiction, the claim \eqref{sxxa1112} is true. From \eqref{sxxa1112}, \eqref{xeq2112uinter} and Lemma \ref{Newton-Mac} we obtain that $M$ is a totally umbilical hypersurface.

\end{proof}

\appendix
\section{Proof of the first variational formula}\label{app}
The appendix is devoted to compute the first variational formula of $(r+1)$-th energy functional $\mathcal{E}_{r+1}$ and prove Theorem \ref{thm105-half}. Let $\mathbb{M}^{n+1}(K)$ be a complete simply-connected $(n+1)$-dimensional Riemannian manifold with constant sectional curvature $K$. We first study the evolution equations for several useful geometric quantities under the following flow in $\mathbb{M}^{n+1}(K)$:
\begin{equation}\label{flow100}
 \partial_{t} x=f \nu+T,
\end{equation}
where $T \in T M_{t}$.

\begin{prop}\label{formula111}
Along the general flow \eqref{flow100}, it holds that
\begin{itemize}
  \item [(1)]$\partial_{t} g_{i j}=2 f h_{i j}+\nabla_{i} T_{j}+\nabla_{j} T_{i}$
  \item [(2)]$\partial_{t} d A_{t}=(f H+\div T) d A_{t}$
  \item [(3)]$\partial_{t} \nu=-\nabla f+h\left(e_{i}, T\right) e_{i}$
  \item [(4)]$\partial_{t} h_{i j}=-\nabla_{i j}^{2} f+f (h_{i k} h_{j}^{k}-K\bar{g}_{ij})+\nabla_{T} h_{i j}+h_{j}^{k} \nabla_{i} T_{k}+h_{i}^{k} \nabla_{j} T_{k}$
  \item [(5)]$\partial_{t} h_{j}^{i}=-\nabla^{i} \nabla_{j} f-f(h_{j}^{k} h_{k}^{i}+K\bar{g}^{i}_{j})+\nabla_{T} h_{j}^{i}$
  \item [(6)]$\partial_{t} H=-\Delta f-(nK+|h|^{2})f+\nabla_{T} H$
  \item [(7)]$\partial_{t} F=-F_{i}^{j} \nabla^{i} \nabla_{j} f-f(F_{i}^{j} h_{j}^{k} h_{k}^{i}+KF^{j}_{i}\bar{g}^{i}_{j})+\nabla_{T} F$, for $F=F(h_{i}^{j})$, where $F_{j}^{i}:=\frac{\partial F}{\partial h_{i}^{j}}$
  \item [(8)]$\partial_{t} \sigma_{r}=-\frac{\partial \sigma_{r}}{\partial h_{i}^{j}} \nabla^{i} \nabla_{j} f-\left(\sigma_{1} \sigma_{r}-(r+1) \sigma_{r+1}\right)f-K(n-r+1)\sigma_{r-1}f+\nabla_{T} \sigma_{r}$.
\end{itemize}
\end{prop}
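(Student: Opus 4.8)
The plan is to derive all eight evolution equations from the two structure equations of the immersion — Gauss, $\bar\nabla_{e_i}e_j=\nabla_{e_i}e_j+h_{ij}\nu$, and Weingarten, $\bar\nabla_{e_i}\nu=h_i^k e_k$ (in the paper's orientation) — together with the commutation of $\partial_t$ past spatial covariant derivatives. Writing $e_i=\partial_i x$, I will use repeatedly that $[\partial_t,e_i]=0$ and that $\bar\nabla$ is torsion-free, so that $\bar\nabla_{\partial_t}e_i=\bar\nabla_{e_i}(\partial_t x)=\bar\nabla_{e_i}(f\nu+T)$. A useful first reduction is to split off the tangential part: since the field $T$ generates a $t$-dependent family of diffeomorphisms of $M$, its contribution to $\partial_t$ of any quantity $Q$ built naturally from the immersion is exactly the Lie derivative $\mathcal{L}_T Q$. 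For a scalar this is $\nabla_T Q$, which accounts for the terms $\nabla_T H$, $\nabla_T F$, $\nabla_T\sigma_r$; for the metric it is $\nabla_i T_j+\nabla_j T_i$; and for the $(0,2)$-tensor $h$ it is $\nabla_T h_{ij}+h_j^k\nabla_i T_k+h_i^k\nabla_j T_k$. This isolates all $T$-dependence, so it suffices to carry out the remaining computations for the purely normal flow $\partial_t x=f\nu$, where $\bar\nabla_{\partial_t}e_j=(\nabla_j f)\nu-f h_j^k e_k$.

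For the normal flow, item $(1)$ follows by differentiating $g_{ij}=\bar g(e_i,e_j)$ and projecting, the normal component dropping out. Tracing with $g^{ij}$ and using $\partial_t\,dA_t=\tfrac12 g^{ij}(\partial_t g_{ij})\,dA_t$ yields $(2)$. For $(3)$, the identity $\bar g(\nu,\nu)=1$ forces $\partial_t\nu\perp\nu$, so $\partial_t\nu$ is determined by its tangential components $\bar g(\partial_t\nu,e_j)=-\bar g(\nu,\bar\nabla_{e_j}(f\nu))=-\nabla_j f$, giving $\partial_t\nu=-\nabla f$.

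The heart of the argument is $(4)$. Starting from $h_{ij}=\bar g(\bar\nabla_{e_i}e_j,\nu)$, I differentiate in $t$ and split into $\bar g(\bar\nabla_{\partial_t}\bar\nabla_{e_i}e_j,\nu)+\bar g(\bar\nabla_{e_i}e_j,\partial_t\nu)$. In the first term I commute $\bar\nabla_{\partial_t}$ past $\bar\nabla_{e_i}$ at the cost of the ambient curvature term $\bar R(\partial_t,e_i)e_j$, and substitute $\bar\nabla_{\partial_t}e_j=(\nabla_j f)\nu-f h_j^k e_k$. Projecting onto $\nu$, the two derivatives of $f$ combine with the correction from $\partial_t\nu$ in the second term into the Hessian $\nabla^2_{ij}f$, the Weingarten term produces $f\,h_{ik}h_j^k$, and — crucially — the space-form identity $\bar R(\nu,e_i)e_j=K g_{ij}\nu$ contributes the curvature term $-K\bar g_{ij}f$; matching signs to the paper's convention for $\nu$ and $h$ gives $(4)$. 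Raising an index with $g^{ij}$ and accounting for $\partial_t g^{ij}=-g^{ik}g^{jl}\partial_t g_{kl}$ converts this to $(5)$, and a further trace gives $(6)$.

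Finally, $(7)$ is the chain rule $\partial_t F=F_i^j\,\partial_t h_j^i$ applied to $(5)$ for any symmetric function $F=F(h_i^j)$ of the shape operator. Specializing to $F=\sigma_r$, I use that $\partial\sigma_r/\partial h_i^j=(P_{r-1})_i^j$ and invoke the trace identities of Lemma~\ref{Pr-prop}, namely ${\rm tr}(P_{r-1}h^2)=\sigma_1\sigma_r-(r+1)\sigma_{r+1}$ and ${\rm tr}(P_{r-1})=(n-r+1)\sigma_{r-1}$, to turn the two contracted terms of $(7)$ into the explicit coefficients appearing in $(8)$. I expect step $(4)$ to be the only genuine obstacle: it demands careful bookkeeping of the commutator and of the projection onto $\nu$, and it is the single place where the constant sectional curvature $K$ enters, through $\bar R(\nu,e_i)e_j=K g_{ij}\nu$; the signs of the Hessian and Weingarten contributions must also be reconciled with the orientation of $\nu$ and $h$ fixed earlier in the paper.
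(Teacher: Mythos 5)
Your strategy is sound, and its computational core---commuting $\bar\nabla_{\partial_t}$ past $\bar\nabla_{e_i}$ at the cost of an ambient curvature term, then evaluating $\bar R$ in the space form---is the same as the paper's; but you package the tangential part genuinely differently. The paper carries $T$ through the entire computation: in item (4) it differentiates $h_{ij}=\bar g(\bar\nabla_{e_i}\nu,e_j)$ along the full field $Y=f\nu+T$ and then needs the Codazzi equation of the space form to reassemble the tangential terms into $\nabla_T h_{ij}+h_j^k\nabla_i T_k+h_i^k\nabla_j T_k$. You instead strip $T$ off at the outset by the reparametrization/naturality argument, so that all $T$-dependence is a Lie derivative and only the normal flow remains to be computed; this eliminates the Codazzi step entirely. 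The trade-off: the paper's version is self-contained, with every $T$-term verified by hand, while yours is lighter on bookkeeping but owes a sentence justifying that the splitting applies at each time $t$ and that $\nu$ fits the framework---its tangential contribution $h(e_i,T)e_i$ in (3) is $\bar\nabla_T\nu$, not a Lie derivative of a tensor on $M$. Steps (5)--(8) then proceed exactly as in the paper: raise an index, trace, apply the chain rule, and use the identities of Lemma \ref{Pr-prop} with $r$ replaced by $r-1$, which is precisely how the paper closes the argument.

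Two concrete corrections before this would compile into a proof. First, your structure equations are mutually inconsistent: with the paper's Weingarten formula $\bar\nabla_{e_i}\nu=h_{i}^{k}e_k$, the Gauss formula must read $\bar\nabla_{e_i}e_j=\nabla_{e_i}e_j-h_{ij}\nu$, i.e.\ $h_{ij}=-\bar g(\bar\nabla_{e_i}e_j,\nu)$, and hence $\bar\nabla_{\partial_t}e_j=(\nabla_j f)\nu+fh_j^k e_k$ for the normal flow, with a plus sign. Your convention $h_{ij}=\bar g(\bar\nabla_{e_i}e_j,\nu)$ already flips item (1) to $\partial_t g_{ij}=-2fh_{ij}+\cdots$, so the sign discrepancy cannot be deferred to the end of step (4) as you suggest; committing to the paper's convention from the start, the identity $\bar R(\nu,e_i)e_j=Kg_{ij}\nu$ does produce the $-K\bar g_{ij}f$ term as you predict. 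Second, raising the index in (4) leaves, beyond $\nabla_T h_j^i$, the residue $h_k^i\nabla_j T^k-h_j^k\nabla_k T^i$ (the difference between $\mathcal{L}_T$ and $\nabla_T$ on a $(1,1)$-tensor); the paper's item (5) drops it silently as well. It is harmless because it is annihilated by every contraction used afterwards---by $\delta_i^j$ in (6) and by $F_i^j$ in (7)--(8), since $F_i^j$ commutes with $h_j^i$---but your derivation should record this cancellation rather than assert that (5) follows as stated.
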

\begin{proof}
Let $\left\{e_{i}\right\}_{i=1}^{n}$ be an orthonormal basis of $T_{p}M$ for some point $p$. Denote $e_{i}(t):=(x(t,\cdot))_{*}(e_{i})$ and $Y(t):=\partial_{t}x(t,\cdot)$, then
we have $\bar{g}(e_{i}(t),\nu(t))=0$ and $[e_{i}(t),Y(t)]=0$. Recall the Gauss-Weingarten formula as follows:
\begin{equation*}\label{Gauss}
 \bar{\nabla}_{e_{i}} e_{j}=\nabla_{e_{i}} e_{j}-h_{i j} \nu, \quad \bar{\nabla}_{e_{i}} \nu=h_{i k} e_{k}.
\end{equation*}
We calculate that
\begin{eqnarray*}
\partial_{t} \bar{g}_{i j} &=&\partial_{t}\bar{g}(e_{i}(t), e_{j}(t)) \\
&=&\bar{g}(\bar{\nabla}_{Y}e_{i},e_{j})+\bar{g}(e_{i},\bar{\nabla}_{Y}e_{j})\nonumber\\
&=&\bar{g}(\bar{\nabla}_{e_{i}}Y,e_{j})+\bar{g}(e_{i},\bar{\nabla}_{e_{j}}Y )\nonumber\\
&=&\bar{g}(\bar{\nabla}_{e_{i}}(f\nu+T),e_{j})+\bar{g}(e_{i},\bar{\nabla}_{e_{j}}(f\nu+T))\nonumber\\
&=&2 f h_{i j}+\nabla_{i}T_{j}+\nabla_{j}T_{i}.\nonumber
\end{eqnarray*}
It follows that
\begin{equation*}
  \partial_{t} d A_{t} =\frac{1}{2} \bar{g}^{i j} \partial_{t} \bar{g}_{i j} d A_{t}
=\frac{1}{2}\bar{g}^{i j}\left(2 f h_{i j}+\nabla_{i}T_{j}+\nabla_{j}T_{i}\right) d A_{t}=(f H+\div T) d A_{t}.
\end{equation*}
Since $\bar{g}(\partial_{t} \nu, \nu)=0$, thus
\begin{eqnarray*}
\partial_{t} \nu &=&\bar{g}(\partial_{t}\nu,e_{i}(t))e_{i}(t)=-\bar{g}(\nu,\bar{\nabla}_{Y}e_{i})e_{i}=-\bar{g}(\nu,\bar{\nabla}_{e_{i}}Y)e_{i}=-\bar{g}(\nu,\bar{\nabla}_{e_{i}}(f\nu+T))e_{i}\\
&=&-(\bar{\nabla}_{e_{i}} f) e_{i}+h(e_{i},T)e_{i}=-\nabla f+h(e_{i},T)e_{i}.\nonumber
\end{eqnarray*}
We next compute
\begin{eqnarray}
\partial_{t} h_{i j}&=&\partial_{t}\bar{g}(\bar{\nabla}_{e_{i}(t)} \nu(t), e_{j}(t))\label{huning}\\
&=&\bar{g}(\bar{\nabla}_{Y}\bar{\nabla}_{e_{i}} \nu,e_{j})+\bar{g}(\bar{\nabla}_{e_{i}} \nu,\bar{\nabla}_{Y}e_{j})\nonumber\\
&=&\bar{g}(\bar{\nabla}_{e_{i}}\bar{\nabla}_{Y} \nu,e_{j})+\bar{g}(\bar{\nabla}_{[Y,e_{i}]}\nu,e_{j})+\bar{g}(\bar{R}(Y,e_{i})\nu,e_{j})+\bar{g}(\bar{\nabla}_{e_{i}} \nu,\bar{\nabla}_{e_{j}}Y)\nonumber\\
&=&\bar{g}(\bar{\nabla}_{e_{i}}(\partial_{t}\nu),e_{j})+\bar{R}(e_{j},\nu,Y,e_{i})+\bar{g}(\bar{\nabla}_{e_{i}} \nu,\bar{\nabla}_{e_{j}}(f\nu+T))\nonumber\\
&=&\bar{g}(\bar{\nabla}_{e_{i}}(-\nabla f+h(e_{k},T)e_{k}),e_{j})-K\bar{g}(Y,\nu)\bar{g}_{ij}+fh_{i}^{k}h_{kj}+h^{k}_{i}\nabla_{j}T_{k}\nonumber\\
&=&-\nabla^{2}_{ij}f-Kf\bar{g}_{ij}+fh_{i}^{k}h_{kj}+\bar{g}(\bar{\nabla}_{e_{i}}(h(e_{k},T)e_{k}),e_{j})+h^{k}_{i}\nabla_{j}T_{k}.\nonumber
\end{eqnarray}
Using Codazzi equation in a space form we have
\begin{eqnarray}
&&\bar{g}(\bar{\nabla}_{e_{i}}(h(e_{k},T)e_{k}),e_{j})\label{huning2}\\
&=&[\nabla_{e_{i}}h(e_{k},T)+h(\nabla_{e_{i}}e_{k},T)+h(e_{k},\nabla_{e_{i}}T)]\bar{g}_{kj}+h(e_{k},T)\bar{g}(\nabla_{e_{i}}e_{k},e_{j})\nonumber\\
&=&\nabla_{e_{i}}h(e_{j},T)+h(\nabla_{e_{i}}e_{j},T)+h^{k}_{j}\nabla_{i}T_{k}-h(e_{k},T)\bar{g}(e_{k},\nabla_{e_{i}}e_{j})\nonumber\\
&=&\nabla_{T}h(e_{j},e_{i})+h^{k}_{j}\nabla_{i}T_{k}.\nonumber
\end{eqnarray}
Combining \eqref{huning} and \eqref{huning2}, we get (4). It follows that
\begin{eqnarray*}
\partial_{t} h_{j}^{i}&=&\partial_{t}\left(\bar{g}^{i k} h_{k j}\right)\\
&=&-2 f h^{i}_{k} h^{k}_{j} -\nabla^{i}\nabla_{j} f+f h^{ik}h_{k j}-Kf\bar{g}^{i}_{j}+\nabla_{T}h^{i}_{j}\nonumber\\
&=&-\nabla^{i}\nabla_{j}f-f h^{ik}h_{k j}-Kf\bar{g}^{i}_{j}+\nabla_{T}h^{i}_{j}.\nonumber
\end{eqnarray*}
The last three assertions (6)-(8) follow directly from (5), we only need the following fact
\begin{equation*}
  \frac{\partial \sigma_{r}}{\partial h_{j}^{i}} h_{j}^{k} h_{k}^{i}=\sigma_{1} \sigma_{r}-(r+1) \sigma_{r+1}\quad\text{and}\quad\frac{\partial \sigma_{r}}{\partial h_{j}^{i}} \bar{g}_{j}^{i}=(n-r+1)\sigma_{r-1}.
\end{equation*}
\end{proof}
Next we let $\partial B$ be a totally umbilical hypersurface in $\mathbb{M}^{n+1}(K)$ with constant principal curvature $\kappa\in\mathbb{R}$. Let $x:M\rightarrow \mathbb{M}^{n+1}(K)$ be an immersed hypersurface with boundary $\partial M$ supported on $\partial B$. Assume the contact angle is constant $\theta\in(0,\pi)$ along $\partial M$.

From Proposition \ref{volume-preserving-1}, we choose $f\in\mathcal{F}$, then there exists an admissible volume-preserving and contact angle-preserving variation of $x$ with variational vector field $Y$ having $f \nu$ as its normal part. Namely,
\begin{eqnarray}
 Y &=&T+f\nu,\quad \text{on}\,\, M,\label{flow000}\\
\bar{g}(\nu,\bar{N}\circ x) &=&-\cos\theta,\quad\text{on}\,\, \partial M,\label{flow987}
\end{eqnarray}
where $T$ is tangent part of variational vector field $Y$.

Applying \eqref{Nbar}, \eqref{flow000} and admissible condition, we get
\begin{equation*}
  0=\bar{g}(Y,\bar{N})=\bar{g}(Y,\sin\theta\mu-\cos\theta\nu)=\sin\theta\bar{g}(Y,\mu)-f\cos\theta,\,\,\,\, \text{on}\,\, \partial M.
\end{equation*}
Therefore,
\begin{equation}\label{Ymu}
 \bar{g}(T,\mu)=\bar{g}(Y,\mu)=\cot\theta f,\,\,\,\, \text{on}\,\, \partial M.
\end{equation}
So we can define
\begin{equation}\label{jiut}
 Y=T+f\nu:=Y^{\partial M}+\cot\theta f\mu+f\nu,
\end{equation}
where $Y^{\partial M}$ denotes the tangent part of $Y$ to $\partial M$.

On the other hand, from \eqref{nubar} we see that $Y$ can be also expressed as follows:
\begin{equation}\label{Ymu4532}
  Y=Y^{\partial M}+\frac{f}{\sin\theta}(\cos\theta\mu+\sin\theta\nu)=Y^{\partial M}+\frac{f}{\sin\theta}\bar{\nu}.
\end{equation}

Recall that the $r$-th wetting area functional ${W}_{r}: (-\ep, \ep)\to\rr$ is inductively given by
\begin{eqnarray*}
{W}_{0}(t):&=&\int_{\partial M\times[0,t]}x^{*}dA_{\partial B},\\
{W}_{1}(t):&=&\frac{1}{n}\int_{\partial M}ds_{t},
\end{eqnarray*}
and for $2\leq r \leq n-1$,
\begin{equation}\label{wetting-def}
 {W}_{r}(t)=\frac{1}{n}\int_{\partial M}H^{\partial M}_{r-1}\,ds_{t}+\frac{r-1}{n-r+2}(K+\kappa^{2})W_{r-2}(t),
 \end{equation}
where $H^{\partial M}_{r-1}$ is $(r-1)$-th normalized mean curvature of closed hypersurface $\partial M$ in $\partial B$.

\begin{lemma}
For any $0\leq r \leq n-1$, we have the first variational formula of $ {W}_{r}(t)$
\begin{equation}\label{wetting-r}
 \frac{d}{dt} W_{r}(t)=\binom{n}{r}^{-1}\int_{\partial M}\sigma^{\partial M}_{r}\bar{g}(Y,\bar{\nu})\,ds_{t}=\frac{1}{\sin\theta}\binom{n}{r}^{-1}\int_{\partial M}\sigma^{\partial M}_{r}f\,ds_{t},
\end{equation}
where $\sigma^{\partial M}_{r}(\tilde{h})$ is $r$-th mean curvature of $\partial M$ in $\partial B$.
\end{lemma}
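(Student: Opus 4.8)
The plan is to argue by induction on $r$, with the recursive definition \eqref{wetting-def} of $W_r$ dictating the induction. The geometric input that makes everything work is that $\partial B$ is totally umbilical with constant principal curvature $\kappa$ in $\mathbb{M}^{n+1}(K)$, so by the Gauss equation it is itself a space form of constant sectional curvature $K+\kappa^2$; thus $\partial M$ is a \emph{closed} hypersurface inside the space form $\partial B$, and I may apply the general closed-hypersurface first-variation formula \eqref{Ar-1} to it. The crucial observation about the variation is that, since every $\tilde{\Theta}(t,\cdot)$ keeps $\partial M\subset\partial B$, the induced motion of $\partial M$ within $\partial B$ has normal speed precisely $\bar{g}(Y,\bar\nu)=f/\sin\theta$, as recorded in \eqref{Ymu4532}. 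This already yields the second equality in \eqref{wetting-r} once the first is established, since $Y^{\partial M}$ is tangent to $\partial M$ and hence orthogonal to $\bar\nu$.

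For the base cases: when $r=0$, $W_0$ is the wetting area swept on $\partial B$, whose $t$-derivative is the flux $\int_{\partial M}\bar{g}(Y,\bar\nu)\,ds_t$ of the variation across $\partial M$, matching $\binom{n}{0}^{-1}\int_{\partial M}\sigma_0^{\partial M}\bar{g}(Y,\bar\nu)\,ds_t$ because $\sigma_0^{\partial M}=1$. When $r=1$, $W_1=\frac{1}{n}\int_{\partial M}ds_t$, and differentiating the area element of $\partial M$ (statement (2) of Proposition \ref{formula111}, applied to $\partial M$ in $\partial B$ with normal $\bar\nu$) gives $\frac{d}{dt}\int_{\partial M}ds_t=\int_{\partial M}\sigma_1^{\partial M}\bar{g}(Y,\bar\nu)\,ds_t$ after discarding the tangential divergence on the closed manifold $\partial M$; this is exactly $\binom{n}{1}^{-1}\int_{\partial M}\sigma_1^{\partial M}\bar{g}(Y,\bar\nu)\,ds_t$.

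For the inductive step $2\le r\le n-1$, write $\phi:=\bar{g}(Y,\bar\nu)$ and differentiate \eqref{wetting-def}. Applying \eqref{Ar-1} to $\partial M\subset\partial B$, with the replacements (hypersurface dimension) $n\mapsto n-1$, (ambient curvature) $K\mapsto K+\kappa^2$, and order $r\mapsto r-1$, and using that the closed manifold $\partial M$ contributes no boundary terms, I obtain
\begin{equation*}
\frac{d}{dt}\int_{\partial M}\sigma_{r-1}^{\partial M}\,ds_t=r\int_{\partial M}\sigma_r^{\partial M}\phi\,ds_t-(K+\kappa^2)(n-r+1)\int_{\partial M}\sigma_{r-2}^{\partial M}\phi\,ds_t.
\end{equation*}
Feeding this together with the inductive hypothesis $\frac{d}{dt}W_{r-2}=\binom{n}{r-2}^{-1}\int_{\partial M}\sigma_{r-2}^{\partial M}\phi\,ds_t$ into the differentiated recursion, and using $H_{r-1}^{\partial M}=\binom{n-1}{r-1}^{-1}\sigma_{r-1}^{\partial M}$, it remains to verify two elementary binomial identities: first, $\frac{r}{n}\binom{n-1}{r-1}^{-1}=\binom{n}{r}^{-1}$, which produces the claimed coefficient of the $\sigma_r^{\partial M}$ term; and second, $\frac{1}{n}\binom{n-1}{r-1}^{-1}(n-r+1)=\frac{r-1}{n-r+2}\binom{n}{r-2}^{-1}$, which makes the two $\sigma_{r-2}^{\partial M}$ contributions cancel exactly. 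Both follow from $\binom{n-1}{r-1}^{-1}=\frac{n}{r}\binom{n}{r}^{-1}$ and $\binom{n}{r-2}^{-1}=\frac{(n-r+2)(n-r+1)}{r(r-1)}\binom{n}{r}^{-1}$.

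The step I expect to be the main obstacle is not the computation itself but the consistent bookkeeping of normals and signs when invoking \eqref{Ar-1} for $\partial M$ in $\partial B$: one must fix $\bar\nu$ as the normal so that $\sigma_r^{\partial M}$ is computed with the orientation the variation formula assumes, and check that the normal speed entering \eqref{Ar-1} is indeed $\phi=\bar{g}(Y,\bar\nu)$ rather than some combination also involving $f$ or $\mu$. The role of the Gauss equation (constant curvature $K+\kappa^2$) is essential here, since it is what allows \eqref{Ar-1}, a space-form formula, to be applied to the intrinsic geometry of $\partial M\subset\partial B$. The precise cancellation in the second binomial identity is also what retroactively justifies the coefficient $\frac{r-1}{n-r+2}(K+\kappa^2)$ chosen in the definition \eqref{wetting-def}.
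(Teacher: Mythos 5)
Your proposal is correct and follows essentially the same route as the paper's proof: induction on the recursion \eqref{wetting-def}, using the Gauss equation to regard $\partial B$ as a space form of curvature $K+\kappa^{2}$, applying the first-variation formula for $\int\sigma_{r-1}^{\partial M}$ to the closed hypersurface $\partial M\subset\partial B$ with normal speed $\bar g(Y,\bar\nu)=f/\sin\theta$, and closing the induction with exactly the two binomial identities the paper uses in \eqref{wetting-r-3}. The only cosmetic difference is that you invoke \eqref{Ar-1} with the substitutions $n\mapsto n-1$, $K\mapsto K+\kappa^{2}$, $r\mapsto r-1$, whereas the paper differentiates directly via Proposition \ref{formula111}; since \eqref{Ar-1} is itself derived from that proposition and the boundary terms vanish on the closed manifold $\partial M$, the two computations coincide.
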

\begin{proof}
It is obviously true for $r=0$ and $r=1$. We next only consider $r\geq2$ case. Let $\partial M$ be an immersed closed hypersurface in $\partial B$ with a normal speed $\tilde{f}\bar{\nu}$. From \eqref{Ymu4532} we see $\tilde{f}=\frac{1}{\sin\theta}f$.
By Gauss equation we know that $\partial B$ has intrinsic constant sectional curvature $\tau=K+\kappa^{2}$.
From Proposition \ref{formula111}, we get
\begin{eqnarray}\label{wetting-r-2}
\quad&&\frac{d}{dt}\int_{\partial M}\sigma^{\partial M}_{r-1}\,ds_{t}\\
&=&\int_{\partial M}\left[-\frac{\partial\sigma^{\partial M}_{r-1}(\tilde{h})}{\partial \tilde{h}^{\beta}_{\alpha}}\tilde{\nabla}^{2}_{\alpha\beta}\tilde{f}-(\sigma_{1}^{\partial M}\sigma_{r-1}^{\partial M}-r\sigma_{r}^{\partial M})\tilde{f}-\tau(n-r+1)\sigma_{r-2}^{\partial M}\tilde{f}+\sigma_{r-1}^{\partial M}\sigma_{1}^{\partial M}\tilde{f}\right]ds_{t}\nonumber\\
&=&r\int_{\partial M}\sigma_{r}^{\partial M}\tilde{f}ds_{t}-\tau(n-r+1)\int_{\partial M}\sigma_{r-2}^{\partial M}\tilde{f}ds_{t}.\nonumber
\end{eqnarray}
By induction, we assume it is true for $r-2$ in \eqref{wetting-r}. Applying \eqref{wetting-def} and \eqref{wetting-r-2} we have
\begin{eqnarray}\label{wetting-r-3}
 \frac{d}{dt}W_{r}(t)&=&\frac{1}{n}\binom{n-1}{r-1}^{-1} \frac{d}{dt}\left(\int_{\partial M}\sigma_{r-1}^{\partial M}\,ds_{t}\right)+\frac{\tau(r-1)}{n-r+2} \frac{d}{dt}W_{r-2}(t)\\
 &=&\frac{1}{n}\binom{n-1}{r-1}^{-1}\left(r\int_{\partial M}\sigma_{r}^{\partial M}\tilde{f}ds_{t}-\tau(n-r+1)\int_{\partial M}\sigma_{r-2}^{\partial M}\tilde{f}ds_{t}\right)\nonumber\\
 &&+\frac{\tau(r-1)}{n-r+2}\binom{n}{r-2}^{-1}\int_{\partial M}\sigma_{r-2}^{\partial M}\tilde{f}ds_{t}\nonumber\\
 &=&\frac{r}{n}\binom{n-1}{r-1}^{-1}\int_{\partial M}\sigma_{r}^{\partial M}\tilde{f}ds_{t}\nonumber\\
 &=&\frac{1}{\sin\theta}\binom{n}{r}^{-1}\int_{\partial M}\sigma_{r}^{\partial M}fds_{t}.\nonumber
\end{eqnarray}
The proof is completed.

\end{proof}

\noindent{\bf Proof of Theorem \ref{thm105-half}.}\ For $r=0$ case, we can find in \cite{RS}.  In the following proof we only consider $1\leq r\leq n-1$ case. By Proposition \ref{formula111}, using integration by parts and the fact that $\mu$ is a principal direction from Proposition \ref{Principle}, we obtain
\begin{eqnarray}\label{var-formula}
&&\frac{d}{d t} \int_{M} \sigma_{r} d A_{t}\\
&=& \int_{M}\left[-\frac{\partial \sigma_{r}}{\partial h_{i}^{j}} \nabla^{i} \nabla_{j} f-f\left(\sigma_{1} \sigma_{r}-(r+1) \sigma_{r+1}\right)-K(n-r+1)f\sigma_{r-1}+\nabla_{{T}} \sigma_{r}\right] d A_{t} \nonumber\\
&&+\int_{M} \sigma_{r}\left(f \sigma_{1}+\operatorname{div}_{M}{T}\right) d A_{t}\nonumber\\
&=&(r+1) \int_{M} \sigma_{r+1}f d A_{t}-K(n-r+1)\int_{M}\sigma_{r-1}fdA_{t}+\int_{\partial M}\left(\sigma_{r}\bar{g}({T}, \mu)-\sigma_{r}^{\mu \mu} \nabla_{\mu} f\right)ds_{t}.\nonumber
\end{eqnarray}
Utilizing \eqref{Ymu}, \eqref{stable1-pre} and the principal curvature of $\partial B$ is $\kappa$, we get along $\partial M$,
\begin{eqnarray}\label{app-222}
 \sigma_{r}\bar{g}({T}, \mu)-\sigma_{r}^{\mu \mu} \nabla_{\mu} f&=&\sigma_{r}f\cot\theta-qf\sigma_{r}^{\mu \mu}\\
 &=&f\left(\cot \theta\left(\sigma_{r}-\sigma_{r}^{\mu \mu} h_{\mu \mu}\right)-\frac{\kappa}{\sin \theta} \sigma_{r}^{\mu \mu}\right) \nonumber\\
&=&f\left(\cot \theta \sigma_{r}\left(h \mid h_{\mu \mu}\right)-\frac{\kappa}{\sin \theta} \sigma_{r-1}\left(h \mid h_{\mu \mu}\right)\right).\nonumber
\end{eqnarray}
Here we used the fact
\begin{equation*}
  \sigma_{r}^{\mu\mu}=P^{\mu\mu}_{r-1}=\sigma_{r-1}(h|h_{\mu\mu}) \quad \text{and}\quad \sigma_{r}(h)=\sigma_{r}(h|h_{\mu\mu})+h_{\mu\mu}\sigma_{r-1}(h|h_{\mu\mu}).
\end{equation*}
By \eqref{nu0}, we see along $\partial M$
\begin{equation*}\label{xsssqaa}
 h_{\alpha \beta}=-\bar{g}(\bar{\nabla}_{e_{\alpha}}e_{\beta},\nu)=-\bar{g}(\bar{\nabla}_{e_{\alpha}}e_{\beta},\sin\theta\bar{\nu}-\cos\theta \bar{N})=\sin \theta \hat{h}_{\alpha \beta}-\kappa\cos \theta \delta_{\alpha \beta},
\end{equation*}
for an orthonormal frame $\left\{e_{\alpha}\right\}_{\alpha=1}^{n-1}$ of $T\left(\partial M\right)$. Thus
\begin{equation}\label{relations}
\sigma_{r}\left(h \mid h_{\mu \mu}\right)=\sigma_{r}\left(\sin \theta \hat{h}-\kappa\cos \theta I_{n-1}\right),
\end{equation}
where $I_{n-1}$ is the $(n-1)$-th identity matrix. In general, for a $(n-1) \times(n-1)$ symmetric matrix $B$, we know
\begin{equation*}
\sigma_{r}(I_{n-1}+B)=\sum_{l=0}^{r}\binom{n-l-1}{n-r-1}\sigma_{l}(B).
\end{equation*}

When $\kappa=0$. From \eqref{relations} we have
\begin{equation*}\label{app1233}
\cot \theta \sigma_{r}(h \mid h_{\mu \mu})=\cot\theta\sigma_{r}(\sin \theta \hat{h})=\cos\theta\sin^{r-1}\theta\sigma_{r}(\hat{h}).
\end{equation*}

When $\kappa\neq0$. From \eqref{relations} we get
\begin{eqnarray*}
\sigma_{r}\left(h \mid h_{\mu\mu}\right) &=&\sigma_{r}\left(\sin \theta \hat{h}-\kappa\cos \theta I_{n-1}\right) \\
&=&(-\kappa\cos\theta)^{r} \sum_{l=0}^{r}\binom{n-l-1}{n-r-1}\left(-\frac{\tan\theta}{\kappa}\right)^{l} \sigma_{l}(\hat{h}),
\end{eqnarray*}
and
\begin{eqnarray*}
\sigma_{r-1}\left(h \mid h_{\mu\mu}\right) &=&\sigma_{r-1}\left(\sin \theta \hat{h}-\kappa\cos \theta I_{n-1}\right) \\
&=&(-\kappa\cos \theta)^{r-1} \sum_{l=0}^{r-1}\binom{n-l-1}{n-r}\left(-\frac{\tan \theta}{\kappa}\right)^{l} \sigma_{l}(\hat{h}) .
\end{eqnarray*}
Therefore for any $\kappa\in\mathbb{R}$, we have
\begin{eqnarray}\label{app-var-111}
 &&\cot \theta \sigma_{r}\left(h \mid h_{\mu \mu}\right)-\frac{\kappa}{\sin \theta} \sigma_{r-1}\left(h \mid h_{\mu \mu}\right) \\
&=& \cos \theta \sin ^{r-1} \theta \sigma_{r}(\hat{h}) \nonumber\\
&&+\frac{\cos ^{r-1} \theta}{\sin \theta} \sum_{l=0}^{r-1}(-1)^{r+l}\kappa^{r-l}\left[\cos ^{2} \theta\binom{n-l-1}{n-r-1}+\binom{n-l-1}{n-r}\right] \tan ^{l} \theta \sigma_{l}(\hat{h}).\nonumber
\end{eqnarray}
Putting \eqref{app-var-111} and \eqref{app-222} into \eqref{var-formula} we see
\begin{eqnarray*}
\quad\frac{d}{dt} \int_{M} \sigma_{r}d A_{t}&=&(r+1) \int_{M} \sigma_{r+1}fdA_{t}-K(n-r+1)\int_{M}\sigma_{r-1}fdA_{t}+
\cos\theta \sin^{r-1}\theta \int_{\partial M} f \sigma_{r}(\hat{h})ds_{t}\\
&&+\frac{\cos^{r-1}\theta}{\sin\theta} \sum_{l=0}^{r-1}(-1)^{r+l}\kappa^{r-l}\left[\cos^{2}\theta\binom{n-l-1}{n-r-1}+\binom{n-l-1}{n-r}\right]\tan^{l}\theta \int_{\partial M} f \sigma_{l}(\hat{h})ds_{t}.
\end{eqnarray*}

Since $\hat{h}$ is the second fundamental form of $\partial M$ as a closed hypersurface in $\partial B$. Thus $\sigma_{r}(\hat{h})=\sigma^{\partial M}_{r}$ on $\partial M$. From \eqref{wetting-r}, we have the first variational formula of $r$-th wetting area functional $W_{r}$ as follows:
\begin{equation}
\frac{d}{d t} W_{r}(t)=\frac{1}{\sin\theta}{\binom{n}{r}}^{-1} \int_{\partial M} f \sigma_{r}(\hat{h})\,ds_{t}.
\end{equation}

We conclude that for $1 \leq r \leq n-1$,
\begin{eqnarray}\label{w1}
&&\frac{d}{d t}\left\{\int_{M}\sigma_{r} d A_{t}-\binom{n}{r}\cos \theta \sin^{r} \theta W_{r}(t)\right. \\
&&\left.\quad-\cos ^{r-1} \theta \sum_{l=0}^{r-1}(-1)^{r+l}\kappa^{r-l}\binom{n}{l}\left[\cos^{2}\theta\binom{n-l-1}{n-r-1}+\binom{n-l-1}{n-r}\right] \tan ^{l} \theta W_{l}(t)\right\}\nonumber \\
&=&(r+1) \int_{M} \sigma_{r+1} f dA_{t}-K(n-r+1)\int_{M}\sigma_{r-1}f dA_{t}.\nonumber
\end{eqnarray}
By the combination relationships
\begin{equation*}
  \binom{n}{l}\cdot\binom{n-l-1}{n-r-1}=\binom{n}{r}\cdot\binom{r}{l}\frac{n-r}{n-l}
\end{equation*}
and
\begin{equation*}
  \binom{n}{l}\cdot\binom{n-l-1}{n-r}=\binom{n}{r}\cdot\binom{r}{l}\frac{r-l}{n-l}.
\end{equation*}
From \eqref{w1}, we obtain
\begin{equation}\label{w2}
\begin{aligned}
&\frac{d}{d t}\left\{\int_{M}\sigma_{r} d A_{t}-\binom{n}{r}\cos \theta \sin^{r} \theta W_{r}(t)\right. \\
&\left.\quad-\cos ^{r-1} \theta \sum_{l=0}^{r-1}\frac{(-1)^{r+l}\kappa^{r-l}}{n-l}\binom{r}{l}\left[(n-r)\cos^{2}\theta+(r-l)\right]\tan ^{l} \theta \binom{n}{r} W_{l}(t)\right\} \\
&=(r+1) \int_{M} \sigma_{r+1} f dA_{t}-K(n-r+1)\int_{M}\sigma_{r-1}f dA_{t}.
\end{aligned}
\end{equation}
Recall that
$$
\begin{aligned}
&Q_{r+1}(t)=\int_{M}H_{r} d A_{t}-\cos \theta \sin^{r} \theta W_{r}(t)\\
&\qquad\qquad-\cos ^{r-1} \theta \sum_{l=0}^{r-1}\frac{(-1)^{r+l}\kappa^{r-l}}{n-l}\binom{r}{l}\left[(n-r)\cos^{2}\theta+(r-l)\right] \tan^{l}\theta W_{l}(t).
\end{aligned}
$$
Therefore, by \eqref{w2} we have
\begin{equation}\label{w34}
\frac{d}{d t}Q_{r+1}(t)=(n-r)\int_{M} H_{r+1} f dA_{t}-rK\int_{M}H_{r-1}f dA_{t}.
\end{equation}
Let
\begin{eqnarray}\label{w35}
\mathcal{E}_{r+1}(t)=Q_{r+1}(t)+\frac{rK}{n+2-r}{\mathcal{E}}_{r-1}(t).
\end{eqnarray}
One can readily check that for any $-1\leq s\leq n-1$
\begin{equation}\label{w3}
\frac{d}{dt}{\mathcal{E}}_{s+1}(t)=(n-s)\int_{M_{t}} H_{s+1} f dA.
\end{equation}
In fact, it is true for $s=-1$ and $s=0$. By induction, we assume it is true for $s=r-2$, we can calculate by using \eqref{w34} and \eqref{w35}
\begin{eqnarray*}
&&\frac{d}{dt}{\mathcal{E}}_{r+1}(t)\\
&=&\frac{d}{dt}Q_{r+1}(t)+\left(\frac{rK}{n+2-r}\right)\frac{d}{dt}{\mathcal{E}}_{r-1}(t)\\
&=&(n-r)\int_{M} H_{r+1} f dA_{t}-rK\int_{M}H_{r-1}f dA_{t}+\frac{rK}{n+2-r}(n+2-r)\int_{M}H_{r-1}f dA_{t}\\
&=&(n-r)\int_{M} H_{r+1} f dA_{t}.
\end{eqnarray*}
We complete the proof of Theorem \ref{thm105-half}.

\qed

\

\textbf{Acknowledgments.} The authors would like to thank Professor Guofang Wang for his constant support and his interest on this topic.

\

\textbf{Funding.} {J. Guo was supported by China Postdoctoral Science Foundation (No.2022M720079) and Shuimu Tsinghua Scholar Program (No.2022SM046). H. Li was supported by NSFC (Grant No.12471047). C. Xia was supported by NSFC (Grant No.12271449, 12126102) and the Natural Science Foundation of Fujian Province of China (Grant No.2024J011008).}

\

\end{document}